\newcommand{\N}{\mathbb{N}}
\newcommand{\R}{\mathbb{R}}
\newcommand{\Z}{\mathbb{Z}}
\newcommand{\AAA}{\mathcal{A}}
\newcommand{\BBB}{\mathcal{B}}
\newcommand{\CC}{\mathcal{C}}
\newcommand{\EE}{\mathcal{E}}
\newcommand{\FFF}{\mathcal{F}}
\newcommand{\MMM}{\mathcal{M}}
\newcommand{\PPP}{\mathcal{P}}
\newcommand{\di}{\partial}
\newcommand{\wM}{\widetilde{M}}
\newcommand{\ideal}{\di\widetilde M}
\newcommand{\htop}{h_{\mathrm{top}}}
\newcommand{\hvol}{h_{\mathrm{vol}}}
\newcommand{\dididi}{ (\partial \wM \times \partial \wM) \setminus \diag}
\newcommand{\sqbd}{\di^2\wM}
\newcommand{\tmu}{\widetilde{\mu}}
  \newcommand{\wE}{\mathcal{\widetilde E} }
\newcommand{\wti}{\widetilde }
\newcommand{\inj}{\mathrm{inj}}
\newcommand{\grad}{\mathrm{grad}}
\newcommand{\card}{\mathrm{card}}
  \newcommand{\diam}{\mathrm{diam}}
  \newcommand{\cl}{\mathrm{cl}}
  \newcommand{\supp}{\mathrm{supp}}
  \newcommand{\diag}{\mathrm{diag}}
 \newcommand{\Id}{\mathrm{Id}}
  \newcommand{\End}{\mathrm{End}}
\newtheorem{MainThm}{Theorem}
\newtheorem{theorem}{Theorem}[section]
\newtheorem{lemma}[theorem]{Lemma}
\newtheorem{proposition}[theorem]{Proposition}
\newtheorem{definition}[theorem]{Definition}
\newtheorem*{thma*}{Theorem}
\theoremstyle{remark}
\newtheorem{remark}[theorem]{Remark}
\DeclareMathOperator{\Leb}{Leb}
\DeclareMathOperator{\id}{Id}
\DeclareMathOperator{\tr}{tr}
\numberwithin{equation}{section}
\numberwithin{figure}{section}
\begin{document}
	
\UseRawInputEncoding

\title[]{Uniqueness of the measure of maximal entropy for geodesic flows on coarse hyperbolic manifolds without conjugate points}
\author{Gerhard Knieper}
\address{Faculty of Mathematics, Ruhr University Bochum, 44780 Bochum, Germany}
\email{gerhard.knieper@rub.de}
\date{\today}
\begin{abstract}

In this article we study geodesic flows on closed Riemannian manifolds without conjugate points 
and divergence property of geodesic rays.
If the fundamental group is Gromov hyperbolic and residually finite 
we prove,  under appropriate assumptions on the expansive set, that the geodesic flow has a unique measure of maximal entropy. This generalizes  corresponding results of Climenhaga, Knieper and War \cite{CKW21} proved under the stronger assumption
of the existence of a background metric of negative sectional curvature. Using results from \cite{CKW22}
and \cite{CKW21} we obtain that the measure of maximal entropy is given by the limiting distribution of closed orbits.
Furthermore, a Margulis type estimate on the number of free homotopy classes containing a closed geodesic of period smaller than $t >0$ follows.
\end{abstract}

\thanks{The author was partially supported by the German Research Foundation (DFG),
CRC TRR 191, \textit{Symplectic structures in geometry, algebra and dynamics.}\\
Keywords: geodesic flow, measure of maximal entropy, no conjugate points.\\
Mathematics Subject Classification: 37D05, 37D35,  37D40
}
\maketitle

\section{Introduction}\label{sec:intro}
If $f: X \to X$ is a homeomorphism of a compact metric space $X$,
the topological entropy $h_{\mathrm{top}}(f)$ of $f$ is an invariant in topological dynamics measuring the orbit complexity of the dynamical system on an
exponential scale. On the other hand, measure-theoretic entropy $h_{\mu}(f)$ is an invariant of 
measurable dynamics reflecting the average complexity of the system relative
to  a measure $\mu$ contained in the space of $f$-invariant Borel probability measure $\MMM_f(X)$ on $X$. 
The relation between these notions is provided by the variational principle which implies (see e.g. \cite{pW82} for details)
$$
h_{\mathrm{top}} (f) = \sup \{h_\mu (f) \;|\; \mu \in \MMM_f(X)\}.
$$
A measure $\mu \in \MMM_f(X)$ such that $h_{\mu}(f) =h_{\mathrm{top}}(f)$ is called a measure of maximal entropy (MME).
For continuous flows $\phi^t: X \to X$ topological - and measure theoretic entropy is defined as the entropy of the time 1 map $\phi := \phi^1$.

In the Riemannian setting topological entropy of the geodesic flow is related to large scale geometry.
If $(M,g)$ is a closed Riemannian manifold without conjugate points 
(all geodesics are globally  minimizing if lifted to the universal cover $\wM$) then by a theorem of Freir\'{e}  and  Ma\~n\'e  \cite{FM82} the topological entropy  $\htop (\phi)$ of the geodesic
flow $\phi^t: SM \to SM$ agrees with the volume entropy $\hvol(g)$.  Volume entropy measures the asymptotic volume growth of geodesics balls $B(p,r) \subset \wM$ in the universal covering $\wM$ of $M$  and is given by
$$
\hvol(g) = \lim\limits_{r \to \infty} \frac{\log \mathrm{vol} B(p,r)}{r} 
$$
For manifolds $(M,g)$ of non-positive sectional curvature, which are special manifolds of no conjugate points, 
an easier proof of  the result of Freir\'{e}  and  Ma\~n\'e  were previously given by Manning \cite{aM79}. However, in contrast to non-positive sectional curvature,  no conjugate points 
is  not a local condition. Properties like the convexity of distance functions which
 hold for  manifolds of non-positive sectional curvature and even no focal points
are not anymore true for manifolds without conjugate points.  Furthermore, the flat strip theorem, important in the study of manifolds 
of  non-positive sectional curvature, fail under the assumption of no conjugate points \cite{kB92}.

In this paper, we like to study the entropy of geodesic flows on closed Riemannian manifolds $(M, g)$ without conjugate points under some coarse hyperbolicity of the geodesic flow.
Hyperbolicity implies the existence of a non-trivial expansive set $\EE \subset SM$ consisting of flow lines
which diverge from each other over time. More formally, if $d$ is the distance function induced by the Riemannian metric on $M$, a vector  $v \in SM$ is contained in $\EE$ if there is some
 $\epsilon >0$ such that for all $w \in SM $ with $d(c_v(t), c_w(\R)) < \epsilon$ for all $t \in \R$,
the geodesics $c_v$, $c_w$ with initial conditions $v$ and $w$ agree up to a time shift. Furthermore, the expansive set
is invariant under the geodesic flow.

Geodesic flows are called expansive if $SM =\EE$.
In case that $(M,g)$ has no conjugate points it was proved by Ruggiero  \cite{rR94} that the expansiveness of the geodesic
flow 
implies that the fundamental group $\pi_1(M)$ is Gromov hyperbolic. Furthermore the divergence property hold, i.e. for each pair of geodesic rays $c_1, c_2: [0, \infty) \to \wM $ on the universal cover we have 
$\sup_{t \ge 0} d(c_1(t),c_2(t)) = \infty$. 
While negatively curved manifolds have expansive geodesic flows, this is not longer true for a typical manifold without conjugate points or even non-positive curvature. Nevertheless, in many cases the expansive set is not empty.

In the following we call an invariant Borel probability measure $\nu \in \MMM_\phi(SM)$ of the geodesic flow $\phi^t: SM \to SM$ non-expansive if its weight of the expansive set is vanishing, i.e. $\nu(\EE) =0$. It is natural to assume that such measures have less complexity and are not of maximal entropy. Using this assumption we can prove the following.

\begin{MainThm}\label{thm:main}
 Let $(M,g)$ be a closed smooth $(C^\infty)$ Riemannian manifold without conjugate points with divergence property of geodesic rays and Gromov hyperbolic and residually finite fundamental group.
  Assume that 
  $$
 h_\nu(\phi)<\htop(\phi)
 $$
for all non-expansive measures
$\nu \in \MMM_\phi(SM)$.   Then the geodesic flow has a unique measure $\mu$ of maximal entropy.  
 
 Furthermore, $\mu(\EE) =1$, $\mu$  is mixing and fully supported on the unit tangent bundle.

\end{MainThm}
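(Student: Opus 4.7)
The plan is to reduce the theorem to the uniqueness framework developed in \cite{CKW22} and \cite{CKW21}, replacing the background negatively curved metric used there with coarse-geometric input obtained directly from Gromov hyperbolicity of $\pi_1(M)$, the divergence property, and residual finiteness.

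First I would set up the boundary at infinity of $\wM$. Since $\pi_1(M)$ acts geometrically on $(\wM,\wti d)$, the \v{S}varc--Milnor lemma yields a quasi-isometry between $\pi_1(M)$ with a word metric and $\wM$, so $\wM$ inherits a well-defined Gromov boundary $\di\wM$. The divergence property combined with the absence of conjugate points ensures that geodesic rays are Morse quasi-geodesics, and hence that the endpoint map assigning each vector in $S\wM$ the pair of endpoints of its geodesic is continuous and $\pi_1$-equivariant into $\sqbd$; the geodesic flow acts as translation on the natural $\R$-parametrization of $\dididi$.

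Next I would construct the Bowen--Margulis--Knieper measure $\mu$ on $SM$ following the classical Knieper scheme. Starting from the Poincar\'e series of $\pi_1(M)$ at critical exponent $\htop(\phi)=\hvol(g)$, one builds a family of Patterson--Sullivan densities $\{\nu_p\}_{p\in\wM}$ on $\di\wM$. Residual finiteness enters by allowing one to pass to finite regular covers where the counting and approximate-specification estimates underlying \cite{CKW22} apply cleanly; this yields that the $\nu_p$ are non-atomic and fully supported. The invariant measure $\mu$ is obtained by descending to $SM$ a Gromov-product-weighted product $\nu_p\otimes\nu_p$ on $\dididi$ together with the $\R$-factor, and one checks $\mu(\EE)=1$ and $h_\mu(\phi)=\htop(\phi)$.

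Uniqueness then proceeds as follows. Any MME $\mu'$ must satisfy $\mu'(\EE)=1$: by ergodic decomposition and the standing hypothesis, no ergodic component supported outside $\EE$ contributes entropy $\htop(\phi)$. On $\EE$ one has expansivity, and the product structure induced by the Patterson--Sullivan construction together with the abstract uniqueness criterion of \cite{CKW22} forces $\mu'=\mu$. Ergodicity follows from uniqueness and flow invariance, mixing from minimality of the $\pi_1$-action on the non-elementary Gromov boundary, and full support from full support of the $\nu_p$ combined with density in $SM$ of closed geodesics whose lifts join prescribed pairs of boundary points. The principal obstacle is the construction and regularity of the Patterson--Sullivan family: without the background negatively curved metric of \cite{CKW21} one has only quasi-isometric control of $\wti d$, and extracting the quantitative Busemann and Gromov-product estimates needed for non-atomicity and the local product structure relies crucially on the divergence property to rule out asymptotic geodesic rays that share an endpoint yet fail to remain a bounded distance apart.
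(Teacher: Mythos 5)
Your construction of the measure $\mu$ follows the paper's route (Patterson--Sullivan densities at the critical exponent, a Gromov-product-weighted product measure on $\dididi$, descent to $SM$), but the proposal breaks down at the uniqueness step, which is the actual content of the theorem. You defer uniqueness to ``the abstract uniqueness criterion of \cite{CKW22}'', i.e.\ the Climenhaga--Thompson specification machinery. That machinery is exactly what this paper is designed to avoid: in \cite{CKW21} the coarse specification property was established via the Morse Lemma \emph{relative to a background metric of negative curvature}, which is not available under the present hypotheses, and you do not establish specification from Gromov hyperbolicity of $\pi_1(M)$ alone. Moreover, the Climenhaga--Thompson criterion requires the entropy \emph{gap} $\sup\{h_\nu(\phi):\nu(\EE)=0\}<\htop(\phi)$, which is strictly stronger than the pointwise hypothesis $h_\nu(\phi)<\htop(\phi)$ assumed here; the paper explicitly notes that the gap would follow only if $\EE$ were known to be open, which it is not. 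The paper instead runs a direct Knieper-type counting argument (Theorem 5.1 and Lemmas 5.2--5.7): assuming a second MME $\nu\perp\mu$, it uses entropy expansiveness of $\phi$ on $\nu$-a.e.\ orbit (since ergodicity of $\nu$ plus the standing hypothesis force $\nu(\EE)=1$) to compute $h_\nu$ from a partition at scale $4r_0=16\delta+12\rho<\inj$, and then bounds the number of partition elements meeting a set $C_n$ with $\mu(C_n)\to 0$ and $\nu(C_n)\to 1$ by $\mathrm{const}\cdot\mu(C_n)e^{2hn}$, forcing $h_\nu<h$ --- a contradiction. None of this counting structure appears in your proposal.

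Two further inaccuracies. First, residual finiteness does not enter to make ``counting and approximate-specification estimates apply cleanly'' or to give non-atomicity of $\nu_p$; its sole role is to produce a finite locally isometric cover $N\to M$ whose injectivity radius exceeds $16\delta$, so that the partition argument above can be run at the hyperbolicity scale, after which uniqueness descends to $M$ by lifting measures. Second, your logical order for ergodicity is reversed relative to what the argument needs: the uniqueness proof begins by decomposing a competing MME into a part singular to $\mu$ and a part absolutely continuous with respect to $\mu$, and uses ergodicity of $\mu$ to identify the latter with $\mu$; hence ergodicity of $\mu$ must be proved \emph{before} uniqueness (the paper gets it from ergodicity of $\bar\mu$ under the diagonal $\Gamma$-action, via \cite{BF17}, together with $\bar\mu(\partial^2\wE)$ being full), and mixing then follows from ergodicity by Babillot's argument, not from minimality of the boundary action.
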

\begin{remark}
\begin{itemize}
\item Closed Riemannian manifolds without conjugate points and Gromov hyperbolic fundamental group have positive topological
entropy (see \cite{CK02}  and Remark  \ref{rem:conj-Grhyp}). 
\item It follows from the work of Newhouse \cite{sN89} that for smooth geodesic flows $\phi^t$
there exists a measure $\nu \in \MMM_\phi(SM)$ of maximal entropy, i.e. $h_\nu(\phi)= \htop(\phi)$.
Hence, the assumption of our theorem forces  $\nu(\EE)$ to be positive. In particular, the expansive set is not empty.
\item If $(M,g)$ is a closed and non-flat  surface without conjugate points, then all assumptions of Theorem \ref{thm:main}
hold (see \cite{CKW21}). Therefore such surfaces have a unique MME.
\item If $(M,g)$ is a closed smooth Riemannian manifold without conjugate points and expansive geodesic flow (i.e $\EE =SM$)
 the uniqueness of the MME has been obtained by Bosch\'e in \cite{aB18}.
As we mentioned above a closed manifold without conjugate points and expansive geodesic flow has the divergence
property and Gromov hyperbolic fundamental group. Since for expansive geodesic flows non-expansive measures obviously do not exist,   the result of Bosch\'e follows from our theorem as a special case.
\item If $(M,g)$ is a closed rank 1 manifold of non-positive curvature the rank 1 (regular)  set consists of orbits of the geodesic flow which do not have non-trivial parallel Jacobi-fields orthogonal to the geodesic (see e.g. \cite{gK97} or \cite{gK98}).
It is a consequence of the flat strip theorem that the regular set is contained in the expansive set.
In  \cite{gK98} we showed that for closed rank 1 manifolds the geodesic flow has a unique MME. Furthermore, the measure has full weight
on the regular, and hence, on the expansive set. A different proof of the uniqueness of the MME was later given in  \cite{BCFT}.
Since there are examples of closed rank 1 manifolds whose fundamental group has $(\Z^2,+)$ as a subgroup, the 
fundamental group is generally not Gromov hyperbolic (see \cite{gK98}).
\end{itemize}
\end{remark}
In \cite{CKW21}  Climenhaga, War and the author proved Theorem \ref{thm:main} under the more special condition of a background
metric of negative curvature and 
the slightly stronger entropy gap assumption 
$$
 \sup\{h_\nu(\phi) : \nu\in \MMM_\phi(SM), \nu(\EE)=0\} < \htop(\phi).
 $$
 for non-expansive measures.
 The entropy gap would follow from our condition that non-expansive measure do not have maximal
 entropy provided the expansive set is open.  Namely, if $\EE$ is open and the entropy gap would not hold there would exist a sequence of measures 
 $\nu_n \in \MMM_\phi(SM)$   with $\nu_n(\EE)=0 $ converging weakly to $\nu$ and $\lim_{n \to \infty}h_{\nu_n}(\phi) = \htop(\phi)$. Since  $\EE$ is open, $\nu(\EE) \le \liminf_{n \to \infty}\nu_n(\EE)= 0$ and by upper semi-continuity of entropy this would yield $h_{\nu}(\phi) = \htop(\phi)$. But this contradicts our assumption that  $h_{\nu}(\phi) <\htop(\phi)$ for non-expansive measures.
 However, to our knowledge openness of the expansive set is not known in our setting.

The assumption of a background
metric of negative curvature implies that the fundamental group is Gromov hyperbolic (see subsection \ref{subsec:hypgroups}). 
Due to the uniformization theorem for surfaces and the proof of the geometrization conjecture in dimension three
Gromov hyperbolicity of the fundamental group implies the existence of a metric with even constant negative curvature.
However,  for closed manifolds of dimension bigger than three,  Gromov hyperbolicity of the fundamental group and existence of a metric without conjugate points might not be enough to yield a metric of negative curvature. In any case, to provide a solution to this question is a very difficult problem.

In \cite{CKW21}  the proof of the uniqueness of the measure of maximal entropy used the background metric  of negative curvature
to establish with the help of the Morse Lemma a coarse specification property  
for the geodesic flow.  Applying the work of Climenhaga and Thompson \cite{CT16}, the specification property was used
to prove the uniqueness of the MME.
 
 However, the proof of the above theorem does not require specification but relied on methods derived in a paper
 of the author  \cite{gK98} on the uniqueness of the measure of maximal entropy for geodesic flows on non-positively curved rank 1 manifolds.\\
 
 There is a interesting and quite flexible notion due to Bowen \cite{rBo72}, called entropy expansiveness  which hold for many dynamical systems for which expansiveness fails (see section \ref{sec:e-expansiveness}).
 
 Given a closed Riemannian manifold $(M,g)$ without conjugate points and   $(\wM,g)$ 
 be the universal cover with the lifted Riemannian metric denoted again by $g$. For $v \in S\wM$ and $\rho >0$ we define 
 the set
 $$
 Z_{\rho} (v) = \{w \in S\wM \mid d(c_v(t), c_w(t)) \le \rho, t \in \R \}.
$$
 The geodesic flow $\phi^t: SM \to SM$  is called entropy expansive at scale $\rho >0$
if $\htop(\tilde \phi, Z_\rho(v))  = 0$, where $\tilde \phi^t$ is the geodesic flow lifted to $S \wM$.
Due to the flat strip theorem geodesic flows on manifolds of non-positive curvature or more generally no focal points are entropy expansive at any scale (see \cite[proposition 3.3]{gK98}).
This also holds for non-flat surfaces without conjugate points \cite[Lemma 4.5]{GKOS14} even so the flat strip theorem fails in this case \cite{kB92}. As far as we know, there is no example of a closed Riemannian manifold with metric without conjugate known for which the
geodesic flow is not entropy expansive for all or even some $\rho >0$. Alternatively to Theorem \ref{thm:main}, we obtain
the uniqueness of the MME under the following conditions.
\begin{MainThm}\label{thm:expansiveMME}
 Let $(M,g)$ be a closed Riemannian manifold without conjugate points with divergence property and Gromov
 hyperbolic and residually finite fundamental group. Assume that the geodesic flow is entropy expansive at a scale larger than $8 \delta$, where $\delta$ is the Gromov hyperbolicity constant of the universal cover. If the expansive set has non-trivial interior then the geodesic flow
 has a unique measure of maximal entropy $\mu$.
 
Furthermore, $\mu(\EE) =1$, $\mu$  is mixing and fully supported on the unit tangent bundle.
\end{MainThm}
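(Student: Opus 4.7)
The strategy is to deduce the entropy-gap hypothesis of Theorem~\ref{thm:main} from the stronger assumptions of Theorem~\ref{thm:expansiveMME}. Once one knows that $h_\nu(\phi) < \htop(\phi)$ for every non-expansive $\nu \in \MMM_\phi(SM)$, all conclusions (uniqueness, $\mu(\EE)=1$, mixing, full support) follow immediately by invoking Theorem~\ref{thm:main}. So the whole plan focuses on verifying this single gap.

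For the reduction, observe that $\mathrm{int}(\EE)$ is open and flow-invariant (since $\EE$ is flow-invariant and $\phi^t$ is a homeomorphism), hence $K := SM \setminus \mathrm{int}(\EE)$ is a proper closed flow-invariant subset. Any non-expansive $\nu$ satisfies $\nu(\mathrm{int}\,\EE) \le \nu(\EE) = 0$, so $\supp \nu \subseteq K$ and $h_\nu(\phi) \le \htop(\phi|_K)$ by the variational principle. The core task therefore reduces to proving the strict inequality
$$
\htop(\phi|_K) < \htop(\phi).
$$
Entropy expansiveness at scale $\rho > 8\delta$, combined with Bowen's theorem, allows one to compute these entropies using $(n,\rho)$-separated sets rather than at an arbitrarily small scale, and it makes the entropy functional $\mu\mapsto h_\mu(\phi)$ upper semi-continuous on $\MMM_\phi(SM)$.

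The exponential rate $\htop(\phi,\rho)$ is then controlled through coarse hyperbolic geometry: the threshold $8\delta$ is exactly what keeps the Morse lemma in the $\delta$-hyperbolic cover $\wM$ applicable, so that two geodesics staying within distance $\rho$ on a long window have their endpoints on $\partial\wM$ pinned down up to shadows of bounded diameter. Combining this shadow control with the Patterson-Sullivan-type boundary methods from \cite{gK98} and the coarse-hyperbolic framework of \cite{CKW21}, one codes $(n,\rho)$-separated orbits by pairs of boundary points and relates their exponential growth rate to visual content on $\partial\wM \times \partial\wM$.

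Now fix $v_0 \in \mathrm{int}(\EE)$ and a ball $B$ around $v_0$ contained in $\EE$. Any $(n,\rho)$-separated set living in $K$ must avoid the iterates of $B$; via the previous step these iterates correspond to a region of \emph{positive} visual content on the boundary, and topological transitivity of the geodesic flow (a consequence of Gromov hyperbolicity together with the divergence property) ensures that this missing region recurs along every long orbit. The resulting uniform exponential defect in the separated-set counts yields $\htop(\phi|_K)<\htop(\phi)$, closing the reduction. I expect this last step to be the main obstacle: quantitatively converting the purely local openness of $\EE$ around a single $v_0$ into a uniform exponential drop in $(n,\rho)$-separated counts. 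The hypothesis $\rho > 8\delta$ is precisely what keeps the Morse-lemma shadow estimates valid at the chosen counting scale, while entropy expansiveness at that scale is what guarantees that counting at scale $\rho$ still captures the full topological entropy.
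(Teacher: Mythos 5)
Your reduction has a genuine gap at exactly the point you flag. The steps up to it are fine: a non-expansive $\nu$ indeed satisfies $\supp\nu\subseteq K:=SM\setminus\mathrm{int}(\EE)$, a proper closed invariant set, and $h_\nu(\phi)\le\htop(\phi|_K)$. But the inequality $\htop(\phi|_K)<\htop(\phi)$ is essentially equivalent to the statement that no measure of maximal entropy lives on $K$ (by the variational principle applied to $\phi|_K$, together with upper semi-continuity of entropy from $h$-expansiveness the supremum over measures supported in $K$ is attained), so it carries the full weight of the theorem and cannot be waved through. The argument you offer for it does not work: topological transitivity supplies one dense orbit, not the uniform recurrence of every long orbit segment into the iterates of $B$, and removing an open set from a transitive system does not in general produce an exponential defect in $(n,\rho)$-separated counts --- entropy can be carried entirely by a closed nowhere dense invariant subset. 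To convert the local openness of $\EE$ at a single $v_0$ into a uniform exponential loss one needs a gluing/counting mechanism of specification or Gibbs type, which is precisely the tool the paper says it avoids (and which is not available here without the negatively curved background metric of \cite{CKW21}).

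The paper takes a different route that sidesteps the gap entirely: condition (2) is fed directly into Theorem \ref{thm:uniqueMME-inj} rather than being converted into the entropy-gap hypothesis of Theorem \ref{thm:main}. There, the nontrivial interior of $\EE$ is used only once, to conclude via full support of the Patterson--Sullivan square $\bar\mu$ that $\bar\mu(\partial^2\wE)>0$, whence ergodicity of $\bar\mu$ under the diagonal $\Gamma$-action gives $\mu(\EE)=1$ for the candidate measure $\mu$ (Theorem \ref{thm:measure mu}). Entropy expansiveness at scale larger than $8\delta$ then enters through Bowen's Theorem \ref{thm:Bowen}: it lets one compute $h_\nu(\phi^{2n})=h_\nu(\phi^{2n},\BBB^n)$ for an \emph{arbitrary} competing ergodic MME $\nu$ using a partition of controlled diameter $4r_0$, with no need to know $\nu(\EE)=1$. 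The contradiction is then extracted not from non-expansiveness of $\nu$ but from its mutual singularity with $\mu$, via the covering $L^{2n}$, the separation Lemmas \ref{lem: r_0 separated}--\ref{4L}, and the lower bound $\mu(L_i^{2n})\ge\beta e^{-2hn}$. Residual finiteness is then used to pass to a finite cover with injectivity radius exceeding $16\delta$, where Theorem \ref{thm:uniqueMME-inj} applies, and uniqueness descends. If you want to salvage your reduction you would have to prove $\htop(\phi|_K)<\htop(\phi)$ by some independent argument; as written, that step is missing.
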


 Using the results of Climenhaga, War and the author proved in \cite[Theorem 1.2]{CKW21} 
 together with Theorem \ref{thm:conj-Grhyp} and Remark \ref{rem:conj-Grhyp},
 we can conclude that the measure of maximal entropy is given by the limiting distribution of closed orbits.
 
Furthermore, as in \cite[Theorem 1.2]{CKW22}, an estimate on the growth of pairwise non-free-homotopic closed geodesics,
obtained by Margulis \cite{gM69}  in the case of negative curvature, follows. More precisely:

 \begin{MainThm}\label{thm:closed geodesics}
	Let $(M,g)$ be a closed Riemannian manifold such that the assumption in Theorem \ref{thm:main} 
	or Theorem \ref{thm:expansiveMME} hold. Denote for $T>0$  by $\PPP(T)$ be any maximal set of pairwise non-free-homotopic closed geodesics of minimal length in the free homotopy classes and $P(T) =\card \PPP(T) $ its cardinality. Consider the measures
\begin{equation*}\label{eqn:nut}
\mu_T= \frac {1}{P(T)} \sum_{c\in \PPP(T)} \frac{\Leb_c}{T},
\end{equation*}
where $\Leb_c$ is Lebesgue measure (length) along the curve $\dot{c}$ in the unit tangent bundle $SM$. Then
\begin{enumerate}
\item \label{eqn:distr}

The measures $\mu_T$ converge  in the weak* topology  as $T\to \infty$ to the measure of maximal entropy.

\item
Furthermore, 
\begin{equation*}\label{eqn:margulis}
P(T) \sim \frac{e^{hT}}{hT},
\end{equation*}
which means that the ratio of $P(T)$ and  $\frac{e^{hT} }{hT} $ converges $1$ as $T \to \infty$.
\end{enumerate}
		
\end{MainThm}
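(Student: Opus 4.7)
The plan is to derive both parts of the theorem as consequences of Theorem \ref{thm:main} or Theorem \ref{thm:expansiveMME}, by feeding their conclusions into the equidistribution and counting arguments of \cite[Theorem 1.2]{CKW21} and \cite[Theorem 1.2]{CKW22}. Under either hypothesis the geodesic flow already has a unique mixing MME $\mu$, with $\mu(\EE)=1$ and full topological support, while $\pi_1(M)$ is Gromov hyperbolic. By Theorem \ref{thm:conj-Grhyp} and Remark \ref{rem:conj-Grhyp} each non-trivial conjugacy class of $\pi_1(M)$ is represented by a minimal-length closed geodesic, so $\PPP(T)$ is in natural correspondence (modulo a negligible exceptional set) with the conjugacy classes of translation length at most $T$.

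For part (1), I would pick any weak* accumulation point $\mu_\infty$ of $(\mu_T)$, which is automatically $\phi$-invariant. A Katok-type argument applied to a transversal chosen from each orbit in $\PPP(T)$ produces a $(T,\epsilon)$-separated set of cardinality $P(T)$, and together with the lower bound $\liminf_{T\to\infty}\tfrac{1}{T}\log P(T)\ge h:=\htop(\phi)$, obtained from a Bowen-type pairing of closed orbits with a local product box on $\EE$, this yields $h_{\mu_\infty}(\phi)\ge h$. Upper semi-continuity of the entropy map --- built into entropy expansiveness at scale larger than $8\delta$ in the setting of Theorem \ref{thm:expansiveMME}, and following from the entropy-gap hypothesis together with $\mu(\EE)=1$ in the setting of Theorem \ref{thm:main} --- then forces $h_{\mu_\infty}(\phi)=h$, and uniqueness of the MME gives $\mu_\infty=\mu$. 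Part (2) follows by the Margulis asymptotic argument of \cite[Theorem 1.2]{CKW22}: I would first record the rough two-sided bound $P(T)\asymp e^{hT}/T$ (lower bound from the pairing of closed orbits with local product boxes on $\EE$, upper bound from orbit counting), then decompose $P(T)=\sum_{n\le T}(P(n+1)-P(n))$, estimate each increment as $(e^{h}-1)e^{hn}/h$ using the equidistribution from part (1) together with the mixing of $\mu$, and sum via the integral test.

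The step I expect to require the most care is verifying that the arguments of \cite{CKW21,CKW22}, originally written under a background metric of negative curvature, continue to apply here with only the coarse hyperbolicity coming from $\pi_1(M)$. The coarse specification property used in those papers relies on the Morse lemma for quasi-geodesics in $\wM$, which is a general feature of Gromov hyperbolic proper geodesic spaces, together with the dictionary between conjugacy classes in $\pi_1(M)$ and closed geodesics supplied by Theorem \ref{thm:conj-Grhyp}. Residual finiteness enters when passing to finite covers to control degeneracies in the class-to-orbit correspondence. Once these substitutions are made, the reasoning of \cite{CKW21,CKW22} transfers to the present setting and yields both parts of Theorem \ref{thm:closed geodesics}.
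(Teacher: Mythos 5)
Your proposal takes essentially the same route as the paper: Theorem~\ref{thm:closed geodesics} is deduced by combining the uniqueness, mixing and full support of the MME from Theorems~\ref{thm:main}/\ref{thm:expansiveMME} with \cite[Theorem 1.2]{CKW21} and \cite[Theorem 1.2]{CKW22}, together with the conjugacy-class counting bounds of Theorem~\ref{thm:conj-Grhyp} and Remark~\ref{rem:conj-Grhyp}, which is precisely all the paper does (it offers no further detail beyond these citations). Your additional sketches of the internal mechanisms of those references are not load-bearing here, though be aware that the weak* equidistribution of part (1) together with mixing does not by itself yield the precise asymptotic $P(T)\sim e^{hT}/(hT)$ --- the normalization by $P(T)$ erases the counting information, so part (2) genuinely requires the full Margulis-type argument of \cite{CKW22} rather than the increment estimate you sketch.
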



\section{Gromov hyperbolic manifolds without conjugate points }
In the next subsection we recall basic facts for Gromov hyperbolic geodesic metric spaces
introduced by Gromov in his seminal work \cite{mG87}. For a introduction see also \cite{BH99}. 

\subsection{Basic facts on Gromov hyperbolic metric spaces}
\begin{definition} \label{def:geod.metric}
Let $(X, d)$ be a metric space. A curve $c: I \to X$ defined on an interval $I \subset \mathbb{R}$
is called a geodesic, if $c$ is an isometry, i.e., $d(c(t), c(s))= |t-s| $ for $t, s \in I$.
A geodesic metric space $(X, d)$ is a  metric space, where each pair of points can be joint by a geodesic.
\end{definition}
\begin{remark}
Note, that in Riemannian geometry geodesics are local isometries. Geodesics in the sense of metric spaces
correspond to minimal geodesics in the Riemannian setting.
\end{remark}
There are several equivalent definitions of Gromov hyperbolicity. The most common definition is the following.
\begin{definition} \label{def:gomovhyp}
Let $\delta$ be a non-negative number. A proper geodesic metric space  $(X, d)$ is called $\delta$-hyperbolic if all geodesic triangles are $\delta$-thin, i.e., each
side of a geodesic triangle is contained in the $\delta$-neighborhood of the two other sides.
A geodesic  metric space is called Gromov hyperbolic if it is $\delta$-hyperbolic for some $\delta \ge 0$.
\end{definition}
A fundamental feature is that Gromov hyperbolicity  is invariant under quasi isometries.
 
\begin{definition} \label{def:quasi-isometry}
A pair of metric spaces $(X_1, d_1)$  and $(X_2, d_2)$ are called quasi-isometric if there exist
constants $\lambda \ge 1$, $\alpha \ge 0$ and $ C \ge 0$ and a map 
$
f: X_1 \to X_2
$
such that
$$
\frac{1}{\lambda} d_1(x,y) -\alpha \le d_2(f(x), f(y)) \le \lambda d_1(x,y) +\alpha
$$
and $d_2(y, f(X_1)) \le C$ for all $y \in X_2$. Furthermore, $f$ is called a quasi-isometry.
\end{definition}

\begin{theorem} \label{thm:quasi-isometry}
Let $(X_1, d_1)$  and $(X_2, d_2)$ be quasi-isometric geodesic metric spaces.
Then $X_1$ is Gromov hyperbolic if and only if $X_2$ is Gromov hyperbolic.
\end{theorem}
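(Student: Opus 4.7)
The plan is to prove one direction—say, $X_2$ Gromov hyperbolic implies $X_1$ Gromov hyperbolic—and then invoke the symmetry of the quasi-isometric relation to get the converse. Symmetry follows by constructing a \emph{quasi-inverse} $g : X_2 \to X_1$: since $f(X_1)$ is $C$-dense in $X_2$, for each $y \in X_2$ one picks $g(y) \in X_1$ with $d_2(f(g(y)), y) \le C$, and a direct estimate from the two-sided quasi-isometry inequality shows that $g$ is itself a quasi-isometry and that $d_1(g(f(x)), x)$ is uniformly bounded.

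The main tool is the \emph{stability of quasi-geodesics} (Morse lemma): in a $\delta$-hyperbolic geodesic space, for any $\lambda \ge 1$ and $\alpha \ge 0$ there is a constant $R = R(\lambda, \alpha, \delta)$ such that every $(\lambda, \alpha)$-quasi-geodesic segment lies within Hausdorff distance $R$ of any geodesic with the same endpoints. I would not reprove this but cite it from \cite{BH99} or \cite{mG87}, since it is the nontrivial geometric input.

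With Morse in hand, take a geodesic triangle in $X_1$ with vertices $p, q, r$ and sides $\gamma_{pq}, \gamma_{qr}, \gamma_{pr}$. Applying $f$ to each side and interpolating the images by short geodesic segments yields three continuous $(\lambda, \alpha')$-quasi-geodesics in $X_2$ connecting $f(p), f(q), f(r)$. Let $\tilde\gamma_{f(p)f(q)}, \tilde\gamma_{f(q)f(r)}, \tilde\gamma_{f(p)f(r)}$ be genuine geodesics between these images, forming a $\delta$-thin triangle. By Morse, each $f \circ \gamma_{\ast}$ lies in the $R$-neighborhood of the corresponding $\tilde\gamma_{\ast}$, and composing the two estimates shows the quasi-triangle is $(2R+\delta)$-thin. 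To pull back to $X_1$, take any $x \in \gamma_{pq}$; then $f(x)$ is within $2R + \delta$ of some $f(y)$ with $y \in \gamma_{qr} \cup \gamma_{pr}$, and the quasi-isometry inequality gives
$$
d_1(x, y) \le \lambda \bigl( d_2(f(x), f(y)) + \alpha \bigr) \le \lambda(2R + \delta + \alpha) =: \delta'.
$$
Hence every geodesic triangle in $X_1$ is $\delta'$-thin, so $X_1$ is Gromov hyperbolic.

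The main obstacle is the Morse lemma itself; once it is granted, the rest is a routine chase of constants through the quasi-isometry inequality. A secondary technical point is that the image of a continuous curve under $f$ need not even be continuous, so one must replace $f \circ \gamma$ by a genuine continuous quasi-geodesic before applying Morse, using a standard connect-the-dots construction with only a mild degradation of the quasi-isometry constants.
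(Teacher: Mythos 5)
Your argument is correct and is essentially the standard proof that the paper simply cites (Theorem III.H.1.9 of \cite{BH99}): reduce to one direction via a quasi-inverse, tame $f\circ\gamma$ into a continuous quasi-geodesic, and transfer thinness of triangles through the Morse lemma. The constant-chasing is routine and your handling of the discontinuity of $f\circ\gamma$ is the right technical caveat, so nothing further is needed.
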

  \begin{proof} For a proof see e.g. Theorem 1.9 on page 402 of  \cite{BH99}.
  \end{proof}

The following simple Lemma will be frequently used and can be viewed as the coarse version of expansivity.
\begin{lemma}\label{lem:endpts-suffice}
Let $(X, d)$ be $\delta$-hyperbolic geodesic metric space and $\rho >0$. Consider two geodesics
 $c_1,c_2\colon [0,T]\to X$ with 
 $d(c_1(0),c_2(0)) \leq \rho $ and $d(c_1(T),c_2(T)) \leq \rho$. Then 
  $d(c_1(t),c_2(t)) \leq 4 \delta + 3 \rho $ for all $ t\in [0,T]$.
  \end{lemma}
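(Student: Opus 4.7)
The plan is to pass to a geodesic quadrilateral and then exploit $\delta$-thinness twice, tracking the resulting constants through a short case analysis.

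Set $A = c_1(0)$, $B = c_1(T)$, $C = c_2(T)$, $D = c_2(0)$. Using that $(X,d)$ is a geodesic metric space, I would choose geodesics $[AD]$ and $[BC]$ of length at most $\rho$ connecting the near endpoints, together with a diagonal $[BD]$. This splits the quadrilateral into two geodesic triangles, $ABD$ and $BCD$. Fix $t\in[0,T]$ and let $x = c_1(t) \in [AB]$. The first application of the $\delta$-thin condition to the triangle $ABD$ gives either (i) a point $p \in [AD]$ with $d(x,p)\le\delta$, or (ii) a point $y \in [BD]$ with $d(x,y)\le\delta$.

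In case (i), since $[AD]$ has length $\le\rho$, one has $d(x,D)\le\delta+\rho$, hence $t = d(A,x) \le 2\rho+\delta$, so also $d(D,c_2(t)) = t \le 2\rho+\delta$ and the triangle inequality yields $d(x,c_2(t))\le 2\delta+3\rho$. In case (ii), I would apply the $\delta$-thin condition a second time to the triangle $BCD$, obtaining that $y$ is within $\delta$ of a point on $[BC]$ or on $[CD]=c_2([0,T])$. If $y$ lies within $\delta$ of $[BC]$, then $d(x,B)\le 2\delta+\rho$, $d(B,C)\le\rho$, and $d(C,c_2(t))=T-t\le 2\delta+\rho$, giving the bound $4\delta+3\rho$ by the triangle inequality. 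If instead $y$ is within $\delta$ of $z = c_2(s)$ for some $s\in[0,T]$, then $d(x,z)\le 2\delta$.

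The key quantitative step in this last subcase is to control $|s-t|$. Using $d(A,D)\le\rho$ together with $d(A,x)=t$ and $d(D,z)=s$, the triangle inequality applied in both directions yields $|s-t|\le 2\delta+\rho$. Since $c_2$ is an isometric embedding, $d(c_2(s),c_2(t))=|s-t|\le 2\delta+\rho$, and hence $d(x,c_2(t))\le d(x,z)+d(z,c_2(t))\le 4\delta+\rho$. Taking the worst of the three cases gives the stated bound $4\delta+3\rho$. The only slightly subtle step is this parameter-matching estimate; everything else is bookkeeping with $\delta$-thinness.
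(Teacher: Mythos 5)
Your proof is correct and follows essentially the same route as the paper: split the geodesic quadrilateral by a diagonal into two triangles, apply $\delta$-thinness once in each, and match the parameters $s$ and $t$ via the triangle inequality. The only difference is organizational --- you sort cases by which side of the triangle $c_1(t)$ is $\delta$-close to, whereas the paper first separates $t$ into a middle range (where closeness to the short sides is impossible) and endpoint ranges; both yield the bound $4\delta+3\rho$.
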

  \begin{proof}
  Let
 $c_1,c_2\colon [0,T]\to X$ be two geodesics with 
 $d(c_1(0),c_2(0)) \leq \rho $ and $d(c_1(T),c_2(T)) \leq \rho$.
  Consider geodesics $c \colon [0,T^\prime] \to X$, $\alpha \colon [0,a] \to X$ and  $\beta: [0,b] \to X$  with $c(0) = c_1(0)$ and $c(T^\prime] ) = c_2(T)$, $\alpha(0) = c_1(0)$,  $\alpha(a) = c_2(0)$ and $\beta(0) = c_1(T)$,  $\beta(b) = c_2(T)$.
 
 Fix $t \in [0,T]$ and assume first that $ T > 2(\delta + \rho)$ and $t \in ( \delta + \rho, T- \delta - \rho)$ holds.
  Since $(X, d)$ is a $\delta$-hyperbolic geodesic metric space we have that
  $$
  d(c_1(t), c[0, T^\prime] \cup  \beta[0, b]) \le \delta 
  $$
  holds.
  By the choice of $t$ we obtain  $ d(c_1(t), \beta[0, b]) \ge T-t -\rho > \delta$. Consequently there
  exists some $ s \in [0,T^\prime]$ with $d(c_1(t), c(s)) ) \le \delta $. Using the triangle inequality
  this yields $| t-s| \le \delta$ and therefore
  $$
  d(c_1(t), c( t) ) \le  d(c_1(t), c( s)) +d(c(s), c(t)) \le 2\delta 
  $$
  Using $\delta$-hyperbolicity again we obtain
   $$
  d(c(t), c_2[0, T] \cup  \alpha[0, a]) \le \delta 
  $$
  Since $d(c(t),\alpha[0, a]) \ge t - \rho $
  and $t - \rho> \delta$ there exists $ s \in [0, T]$ with $d(c(t) , c_2(s)) \le \delta$. Furthermore,
  the triangle inequality yields
  $$
  |t-s| \le \delta + \rho
  $$
  and therefore
  \begin{align*}
  d(c_1(t) , c_2(t)\le &d(c_1(t) , c(t)) + d(c(t) , c_2(s)) + d(c_2(s) , c_2(t))\\
 \le & 2\delta + \delta + \delta + \rho = 4 \delta + \rho
   \end{align*}
In case that  $T \le  2(\delta + \rho)$, $0 \le t \le \delta + \rho $ or $  T- \delta - \rho \le t $ holds
in easy application of the triangle inequality yields $ d(c_1(t) , c_2(t) )\le 2 \delta + 3 \rho$ which finishes the proof of the Lemma.
  \end{proof}
  Each $\delta$-hyperbolic metric space has a natural boundary consisting of an equivalence class of geodesic rays
  \begin{definition}\label{dfn:GB}
  Let $(X, d)$ be $\delta$-hyperbolic  space. Then two geodesic rays 
 $c_1,c_2\colon [0,\infty) \to X$ are called equivalent if $d(c_1(t), c_2(t)$ is bounded. The equivalence class
 of a ray $c: [0,\infty) \to X$  is denoted by $c(\infty)$ and the set of all equivalence classes by $\partial X$.
  \end{definition}
  In the sequel we will need also need the following Lemma.
   \begin{lemma}\label{lem:GB}
    Let $(X, d)$ be $\delta$-hyperbolic  space. Then the following holds:
    \begin{enumerate}
    \item For distinct $\xi, \eta \in \partial X$ there exists a geodesic
   $c: \R \to X$  such that $c(-\infty) = \xi$ and  $c(\infty) = \eta$.
     \item For each $p \in X$  and $\xi \in \partial X$ there exists a ray  $c: [0,\infty) \to X$ with $c(0) =p$ and  $c(\infty) =\xi$.
   For any other geodesic ray  $c^\prime: [0,\infty) \to X$ with $c^\prime(0) =p$ and  $c^\prime(\infty) =\xi$ we have
   $d(c(t), c^\prime(t)) \le 2 \delta$.
 \item If $c_1: [0,\infty) \to X$, $c_2: [0,\infty) \to X$ is a pair of rays such that  $c_1(\infty) = c_2(\infty)$. 
 \begin{enumerate}
 \item Then there exists
 $T >0$ such that $d(c_1(t), c_2(t)) \le 5 \delta$ for all $t \ge T$.
  \item $d(c_1(t), c_2(t)) \le 19\delta + 3d(c_1(0), c_2(0) )$ for all $t \ge 0$.
 \end{enumerate}
\end{enumerate}
 \end{lemma}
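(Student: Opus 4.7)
The plan is to address the four claims in turn, starting with (1) and (2) by Arzelà--Ascoli limit arguments, then bootstrapping to (3a) by a thin-triangle argument, and finally combining (3a) with Lemma \ref{lem:endpts-suffice} to obtain (3b).

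For (1), fix a basepoint $p \in X$ and representative rays $\gamma_\xi, \gamma_\eta$ from $p$ to $\xi$ and $\eta$. For each $n$, let $\sigma_n$ be a geodesic segment from $\gamma_\xi(n)$ to $\gamma_\eta(n)$. Apply $\delta$-thinness to the triangle with vertices $p$, $\gamma_\xi(n)$, $\gamma_\eta(n)$: the point $p$ lies within $\delta$ of either $\gamma_\xi[0,n]$, $\gamma_\eta[0,n]$, or $\sigma_n$. Since $\xi \neq \eta$, the Gromov product $(\xi|\eta)_p$ is finite, so $p$ is within a bounded distance of $\sigma_n$ for all large $n$. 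Reparametrize $\sigma_n$ so that its nearest point to $p$ sits at parameter $0$; then the $\sigma_n$ are equibounded on every compact interval (with images in a neighborhood of $p$ controlled by $\delta$), so properness of $X$ and Arzelà--Ascoli yield a subsequential limit $c \colon \mathbb{R} \to X$. That $c$ is a geodesic is automatic from the uniform convergence on compact sets, and $c(\pm\infty) = \xi, \eta$ follows from another thin-triangle comparison with $\gamma_\xi, \gamma_\eta$. For the existence part of (2), run the same argument with segments $[p, \gamma_\xi(n)]$.

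For the uniqueness part of (2), suppose $c, c'$ are rays with $c(0) = c'(0) = p$ and $c(\infty) = c'(\infty) = \xi$. Fix $t \geq 0$ and take $T \gg t$. Since both rays limit to $\xi$, the distance $d(c(T), c'(T))$ is bounded by some $R$ independent of $T$. Consider the triangle with vertices $p, c(T), c'(T)$: by $\delta$-thinness, $c(t)$ is within $\delta$ of either the opposite side of length $\leq R$ or the image of $c'[0,T]$. For $T$ large enough relative to $t$ and $R$, the first option is ruled out by distance estimates, so $d(c(t), c'(s)) \leq \delta$ for some $s$. Comparing distances from $p$ forces $|t-s| \leq \delta$, and the triangle inequality then gives $d(c(t), c'(t)) \leq 2\delta$.

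For (3a), apply (2) to extract, for each large $T_0$, a ray $\tilde c$ starting at $c_1(T_0)$ with $\tilde c(\infty) = \xi$. The uniqueness estimate from (2), transplanted to basepoint $c_1(T_0)$ and applied to $\tilde c$ versus the tails of $c_1$ and of $c_2$ (after matching the footpoint via a short geodesic of length $\leq d(c_1(T_0), c_2(T_0))$ and absorbing this into the ``basepoint''), will bound $d(c_1(t), c_2(t))$ by $2\delta + 2\delta + \delta = 5\delta$ once $T_0$ is chosen so that the initial offset has been essentially dissipated; the last $\delta$ comes from reparametrizing so the foot\-points coincide. The delicate step — and the main technical obstacle — is precisely this bookkeeping of constants to land at $5\delta$ and not something larger; one verifies it by running the triangle-thinness argument twice, once at each end, exactly as in the proof of Lemma \ref{lem:endpts-suffice}. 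Given (3a), claim (3b) is then routine: choose $T$ such that $d(c_1(t), c_2(t)) \leq 5\delta$ for all $t \geq T$, and on $[0,T]$ apply Lemma \ref{lem:endpts-suffice} to $c_1|_{[0,T]}$ and $c_2|_{[0,T]}$ with $\rho = \max(d(c_1(0),c_2(0)), 5\delta)$, obtaining $4\delta + 3\rho \leq 19\delta + 3d(c_1(0),c_2(0))$, as required.
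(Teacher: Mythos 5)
Your overall architecture matches the paper's: the paper simply cites \cite[Lemmas III.H.3.2, III.H.3.3]{BH99} for (1), (2), (3a) and derives (3b) from (3a) together with Lemma \ref{lem:endpts-suffice}, and your deduction of (3b) — taking $T$ from (3a), applying Lemma \ref{lem:endpts-suffice} on $[0,T]$ with $\rho=\max\bigl(d(c_1(0),c_2(0)),5\delta\bigr)$, and checking $4\delta+3\rho\le 19\delta+3d(c_1(0),c_2(0))$ — is exactly the intended argument and is correct. Your Arzel\`a--Ascoli constructions for (1) and the existence part of (2), and the thin-triangle argument for the $2\delta$ uniqueness bound in (2), are the standard proofs and are fine in outline.

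The genuine gap is in (3a). You assert the $5\delta$ bound but explicitly defer the ``bookkeeping of constants,'' and the mechanism you sketch does not close: you propose to transplant the uniqueness estimate of (2) to the basepoint $c_1(T_0)$ and compare with the tail of $c_2$ ``after matching the footpoint via a short geodesic of length $\le d(c_1(T_0),c_2(T_0))$.'' But the only a priori information on $d(c_1(T_0),c_2(T_0))$ is that it is \emph{bounded} (that is what $c_1(\infty)=c_2(\infty)$ means), not that it is small; your phrase ``once $T_0$ is chosen so that the initial offset has been essentially dissipated'' presupposes precisely the conclusion of (3a). A non-circular proof must first show, e.g.\ by a thin-quadrilateral argument on $c_1(0),c_2(0),c_1(n),c_2(n)$ using that $t$ lies deep in the middle relative to both the initial offset and the uniform bound $\sup_s d(c_1(s),c_2(s))$, that $c_1(t)$ is within $2\delta$ of the \emph{image} of $c_2$ for all large $t$, and only then compare parametrizations to pick up the remaining additive constants; this is the content of the cited Bridson--Haefliger lemma. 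As it stands, part (3a) of your proof is a restatement of the claim rather than a derivation, and since (3b) — and hence the constant $q(R)=19\delta+4R$ used later in Lemma \ref{lem: lower bound mu(D)} — rests on it, the gap should be filled (or the statement cited, as the paper does).
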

   \begin{proof}
   For a proof of (1), (2), (3a) see \cite[Lemma III.H 3.2, Lemma III.H 3.3]{BH99}. The assertion (3b) is a consequence
   of (3a) and Lemma \ref{lem:endpts-suffice}.
\end{proof}
Gromov introduced  in \cite{mG87} a natural topology on $\partial X$ and  on $ \bar X :=X \cup \partial X$  such that  $ \bar X $ is compact (see  also \cite[Def. III.H 3.5]{BH99}). 
We will give a definition in the geometric context in subsection \ref{subsec:Noconjugatepts-Ghyp}.

\subsection{Gromov hyperbolic groups}\label{subsec:hypgroups}
\begin{definition}\label{def:Hypgroups}
Let $G$ be a finitely generated group and denote by $\CC(G, S)$ the Cayley graph of $G$ with respect to some finite generating set $S \subset G$. Then  $G$ is called Gromov hyperbolic if $\CC(G, S)$ equipped with the word metric
is a $\delta$-hyperbolic metric space for some $\delta\ge 0$
\end{definition}
\begin{remark}
\begin{itemize}
\item
If $S_1, S_2$ are finite generating sets of $G$ then the identity $\id: \CC(G, S_1) \to  \CC(G, S_2)$ is a quasi isometry.
In particular, the definition of Gromov hyperbolicity of a finitely generated group does not depend on the choice of the finite generating set.
\item Let $\Gamma$ be a group acting properly, cocompactly and isometrically on some geodesic metric space $X$.
Then $\Gamma$ is finitely generated and for any reference point $x_0 \in X$ and finite set of generators the orbit map $\Gamma \to X$ with
$\gamma \mapsto \gamma x_0$ extends to a quasi isometry between $\CC(G, S)$ and $X$ (see e.g. \cite[Prop. I.8.19]{BH99}).
In particular,  $\Gamma$ is Gromov hyperbolic if and only if $X$ is Gromov hyperbolic.
\end{itemize}
\end{remark}

Assume that $\Gamma$ is a Gromov hyperbolic group acting properly and cocompactly
by isometries on  proper metric space $X$. Consider for  $x\in X$ and $R>0$ the subset of  $\Gamma$ given by
$$
\Gamma_R(x) =  \{\gamma \in \Gamma \mid\; d(x,\gamma x) \leq R\}.
$$
 Then by a result of Coornaert \cite{mCo93} there are  constants $0 <C_1 \le C_2$ and $h>0$ such that
\begin{equation}\label{eqn:co93}
C_1 e^{hR} \le \card \ \Gamma_R(x) \le C_2 e^{hR}
\end{equation}
If $(X,g)$ is a Riemannian manifold then $h$ is the volume entropy $\hvol(g)$, i.e.
$$
h =  \lim\limits_{r \to \infty} \frac{\log \mathrm{vol} B(p,r)}{r} 
$$
Define the translation length of $\gamma \in \Gamma$ by
 \begin{equation} \label{e: length}
\ell(\gamma) = \inf_{x \in X} d(x,\gamma x).
\end{equation}
Note that the infimum in
\eqref{e: length} is attained
for each $\gamma \in \Gamma$ (see
for example \cite[Prop. II.6.10]{BH99}). Furthermore, $\ell(\gamma^{-1}) = \ell(\gamma)$ and
$\ell(\alpha\gamma\alpha^{-1}) = \ell(\gamma)$ for
every isometry $\alpha$ of $X$. Therefore the length of a conjugacy class $[\gamma]$  of $\gamma$  can be defined as the translation length of a representative of  $[\gamma]$. 
Using ideas developed  in \cite{gK83} and \cite{gK97} we proved in  \cite{CK02}: 
\begin{theorem} \label{thm:conj-Grhyp}
Let $\Gamma$ be a group acting
properly and cocompactly by isometries on
a proper geodesic  Gromov-hyperbolic
metric space  $X$. Assume
that the Gromov boundary $\partial X$ of $X$
contains more than two points.
  For all $t \geq 0$, let
$$
P(t) = \card \{ [\gamma] \mid\; \gamma \in \Gamma \;  \; \text{is primitive and }   \; \; \ell(\gamma) \leq t \},
$$
where $ \gamma \in \Gamma$ is called primitive if it cannot be written as a proper power $\gamma = \alpha^n$ for some $\alpha \in \Gamma$ and $ n \ge 2$.

  Then there exists
constants   $A\ge 1$,  $t_0 >0$ and such that
$$
\frac{1}{A} \frac{ e^{ht} } {t} \leq P(t) \leq A e^{ht}
$$
for all $t \geq t_0$, where $h>0$  is as in \ref{eqn:co93}.
\end{theorem}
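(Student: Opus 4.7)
The upper bound of $P(t)$ follows directly from Coornaert's estimate \eqref{eqn:co93} together with cocompactness. Let $K := \sup_{z\in X} d(z, \Gamma x) < \infty$. Given a primitive class $[\gamma]$ with $\ell(\gamma)\le t$, choose $y\in X$ attaining $d(y,\gamma y)=\ell(\gamma)$ and $\alpha\in\Gamma$ with $d(\alpha x,y)\le K$; the conjugate $\alpha^{-1}\gamma\alpha$ then represents $[\gamma]$ and lies in $\Gamma_{t+2K}(x)$. Since distinct primitive conjugacy classes are disjoint subsets of $\Gamma$, we obtain
$$
P(t) \le |\Gamma_{t+2K}(x)| \le C_2 e^{2hK} e^{ht},
$$
which yields the upper bound $P(t)\le A\,e^{ht}$.

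For the lower bound I would adapt the Margulis--Knieper counting strategy of \cite{gK83,gK97} to the coarse setting. The key ingredient is a \emph{coarse displacement formula}: for every loxodromic $\gamma_0\in\Gamma$ of translation length $L$ with quasi-axis $c_0\subset X$,
$$
d(p, \gamma_0^n p) = nL + 2\,d(p,c_0) + O(\delta) \qquad (p\in X,\; n\ge 1),
$$
which is the $\delta$-hyperbolic analogue of the exact formula $\sinh(d(p,\gamma_0 p)/2)=\sinh(L/2)\cosh d(p,c_0)$ in real hyperbolic space; it follows by analyzing the thin quasi-geodesic triangle with vertices $p$, $\gamma_0^n p$, and nearest-point projections onto $c_0$, using $\delta$-thinness together with Lemma~\ref{lem:endpts-suffice}. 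Granting this, fix $K$ so large that the annulus $A_K(t):=\Gamma_t(x)\setminus\Gamma_{t-K}(x)$ satisfies $|A_K(t)|\ge c_K\,e^{ht}$ by \eqref{eqn:co93}, and partition $A_K(t)$ by the conjugacy class of the primitive root. For each primitive $[\gamma_0]$ with $\ell(\gamma_0)=L$ and $n\ge 1$ such that $nL\in[t-K,t]$, the displacement formula confines the conjugates $\alpha\gamma_0^n\alpha^{-1}$ lying in $A_K(t)$ to left cosets $\alpha Z(\gamma_0)$ with $\alpha^{-1}x$ in a bounded tube around $c_0$; since $Z(\gamma_0)$ is virtually $\langle\gamma_0\rangle$ and acts on this tube with fundamental domain of length $L$, cocompactness bounds the number of such cosets by $O(L) = O(t)$. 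Summing these contributions,
$$
c_K\,e^{ht} \le |A_K(t)| \le C\,t\bigl(P(t)-P(t-K)\bigr) + \sum_{n\ge 2} C\,(t/n)\bigl(P(t/n+K)-P(t/n-K)\bigr),
$$
and the $n\ge 2$ sum is dominated, via the upper bound $P(s)\le Ae^{hs}$, by its $n=2$ term $O(t\,e^{ht/2}) = o(e^{ht})$. Hence $P(t)-P(t-K)\ge c'\,e^{ht}/t$ for $t$ large, and therefore $P(t)\ge A^{-1} e^{ht}/t$ after adjusting constants.

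The main obstacle is the tightness of the coarse displacement formula: the additive $O(\delta)$ error must be controlled with care, since the tube estimate giving only $O(L)$ conjugates per primitive class in the annulus breaks down if one allows even a mild multiplicative distortion on the radial coordinate. A secondary technical point is that the power sum requires a uniform positive lower bound on translation lengths of loxodromic elements of $\Gamma$, which follows from properness of the action combined with the non-elementarity of $\Gamma$ ensured by the hypothesis that $\partial X$ contains more than two points.
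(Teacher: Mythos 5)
Your upper bound is correct and is the standard argument: conjugating so that a near-minimizing point lies in a fixed compact set places one representative of each primitive class with $\ell(\gamma)\le t$ inside $\Gamma_{t+2K}(x)$, and disjointness of conjugacy classes plus \eqref{eqn:co93} gives $P(t)\le A e^{ht}$. (Note the paper itself does not prove this theorem; it quotes \cite{CK02}, whose lower-bound argument follows the same Margulis--Knieper template you describe, so the issue is whether your sketch actually closes the argument.)

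The lower bound has a genuine gap at the central inequality
$c_K e^{ht}\le |A_K(t)|\le C\,t\,(P(t)-P(t-K))+\sum_{n\ge2}(\cdots)$.
An element $\alpha\gamma_0^n\alpha^{-1}\in A_K(t)$ only satisfies $d(x,\alpha\gamma_0^n\alpha^{-1}x)=n\ell_\infty(\gamma_0)+2\,d(\alpha^{-1}x,c_0)+O(\delta)\in[t-K,t]$; this forces $n\ell_\infty(\gamma_0)\le t+O(\delta)$ but does \emph{not} force $nL\in[t-K,t]$, since the deficit can be absorbed by a large distance $d(\alpha^{-1}x,c_0)$. Your partition therefore silently discards all conjugates of elements of translation length well below $t-K$ whose conjugator sits far from the axis, and these cannot be dismissed by a union bound: a class with $\ell_\infty(\beta)=t-m$ contributes up to $O\bigl(t\,e^{hm/2}\bigr)$ orbit points to $A_K(t)$ (a tube of radius $\approx m/2$), and summing this against the a priori bound $O(e^{h(t-m)})$ on the number of such classes gives $O(t\,e^{ht})$, i.e.\ the discarded terms could a priori account for all of $A_K(t)$. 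The missing ingredient --- which is the real content of the lower bound in \cite{CK02} --- is a uniform statement that a definite proportion of $\gamma\in A_K(t)$ satisfy $d(x,\gamma x)\le\ell(\gamma)+D$ for a constant $D$ independent of $t$ (proved there by comparing $d(x,\gamma^2x)$ with $2d(x,\gamma x)$ via thin triangles and showing the exceptional set is a minority, or alternatively via Patterson--Sullivan shadows). Only after that reduction does your ``$O(t)$ conjugates per class'' count yield $P(t)-P(t-K)\ge c'e^{ht}/t$. Two secondary points: your displacement formula with $L=\ell(\gamma_0)$ and error $O(\delta)$ is false for large $n$ (the error grows like $n(\ell-\ell_\infty)$); it holds with the stable translation length $\ell_\infty$, and the discrepancy $|\ell-\ell_\infty|=O(\delta)$ must be absorbed into the constants, as you partly anticipate. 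The uniform positive lower bound on $\ell_\infty$ over loxodromic elements does follow from properness and cocompactness as you assert.
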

\begin{remark} \label{rem:conj-Grhyp}

This results has the following interpretation in the Riemannian setting.
Assume that $(M,g)$ is a closed manifold Riemannian manifold with a Gromov hyperbolic fundamental group
$\pi_1(M)$. Let $\wM$ be the universal covering then $M = \wM/ \Gamma$ where  $\Gamma$   is the group of covering transformations isomorphic to $\pi_1(M)$.
As mentioned above
$h$ is the volume entropy $\hvol(g)$  of $g$ and $P(t)$ is equal to the number of free homotopy classes containing a closed geodesic of period less than $t$.
By a result of Manning \cite{aM79}, the topological entropy $\htop(\phi)$ of the geodesic flow $\phi^t$ on $SM$ is larger than the volume entropy.

\end{remark}

\subsection{Closed manifolds without conjugate points and  Gromov hyperbolic fundamental groups}\label{subsec:Noconjugatepts-Ghyp}
In the following we will assume that $(M,g)$ is a closed Riemannina manifold without conjugate points and Gromov hyperbolic fundamental
group $\pi_1(M)$. As pointed out above in this case Freir\'{e}  and  Ma\~n\'e  \cite{FM82} 
proved that
the volume entropy $\hvol(g)$ is equal to the topological entropy $\htop(\phi)$. 

First we recall the following theorem which is foundational in the theory
of manifolds without conjugate points. 
\begin{theorem}(Hadamard-Cartan)\label{thm: Hadamard-Cartan}\\
Let $(M,g)$ be a complete $n$-dimensional Riemannian manifold with
no conjugate points. Then for all $p \in M$ the exponential map
$\exp_p: T_pM \to M$ is a covering map.
In particular complete simply connected manifolds without conjugate points are diffeomorph to
$\R^n$.
\end{theorem}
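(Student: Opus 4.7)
The plan is to follow the classical argument for the Hadamard--Cartan theorem, adapted to the no-conjugate-points setting.

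First I would check that $\exp_p\colon T_pM\to M$ is a local diffeomorphism everywhere. By definition, a point $q=\exp_p(v)$ is conjugate to $p$ along the geodesic $c_v$ precisely when the differential $(d\exp_p)_v\colon T_v(T_pM)\to T_qM$ is singular. Since $(M,g)$ has no conjugate points, $(d\exp_p)_v$ is a linear isomorphism for every $v\in T_pM$, so $\exp_p$ is a local diffeomorphism by the inverse function theorem.

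Next I would pull back the Riemannian metric: set $\tilde g = \exp_p^*\, g$ on $T_pM$, which is a smooth Riemannian metric because $\exp_p$ is a local diffeomorphism. By construction, $\exp_p\colon (T_pM,\tilde g)\to (M,g)$ is a local isometry. The crucial observation is that the radial rays $t\mapsto tv$ (with $v\in T_pM$) are geodesics in $(T_pM,\tilde g)$, because $\exp_p$ maps them to the geodesics $t\mapsto c_v(t)$ in $M$ and local isometries preserve geodesics. Since $(M,g)$ is complete, these radial geodesics are defined on all of $\R$, so every geodesic of $(T_pM,\tilde g)$ emanating from the origin $0\in T_pM$ extends indefinitely. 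By the geodesic-completeness part of Hopf--Rinow (applied at the single point $0$), the Riemannian manifold $(T_pM,\tilde g)$ is complete.

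Now I would invoke the standard fact that a local isometry from a complete connected Riemannian manifold to a connected Riemannian manifold is a covering map. The usual proof runs as follows: given $q\in M$, pick any $y\in\exp_p^{-1}(q)$ and use completeness of $\tilde g$ together with the path-lifting property of local isometries (horizontal lifting of geodesics) to lift every short radial geodesic from $q$ to a geodesic starting at $y$ in $T_pM$; this produces an evenly covered neighborhood of $q$. Applied here, this shows that $\exp_p\colon T_pM\to M$ is a covering map, proving the first assertion.

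For the "in particular" statement, if $M$ is simply connected then any covering from a connected space is a homeomorphism, hence $\exp_p$ is a diffeomorphism, and $M$ is diffeomorphic to $T_pM\cong\R^n$. The main technical subtlety is the completeness step: one must be careful that Hopf--Rinow only needs geodesic completeness at one point to yield completeness of the metric, and that the lifted metric $\tilde g$ is genuinely smooth (which comes for free from $\exp_p$ being a local diffeomorphism). Everything else reduces to bookkeeping with local isometries and path lifting.
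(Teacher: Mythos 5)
Your proof is correct and follows the standard classical argument for the Cartan--Hadamard theorem in the no-conjugate-points setting: absence of conjugate points makes $\exp_p$ a local diffeomorphism, the pullback metric on $T_pM$ is complete by Hopf--Rinow applied at the origin (where all radial lines are complete geodesics), and a local isometry from a complete manifold is a covering map. The paper states this result without proof as a foundational classical fact, and your argument is precisely the expected one, with the genuine subtleties (smoothness of the pullback metric, completeness from geodesic completeness at a single point) correctly identified and handled.
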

\begin{remark}
\begin{enumerate}
\item[{\rm (a)}] A complete Riemannian manifold $(M,g)$ has no conjugate points iff for any pair of points on the universal cover $\wM$ there is a unique connecting geodesic geodesics with respect to the lifted metric. In particular all geodesics are minimizing.
\item[{\rm (b)}] The topology of those manifolds is to a large extend determined by the
fundamental group since the contractibility of the universal cover implies that the higher
homotopy groups are vanishing, i.e., $\pi_k(M) = 0$ for $k \ge 2$.
\item[{\rm (c)}] Manifolds of non-positive sectional curvature form an important subclass of manifolds with no conjugate points. Simply connected complete manifolds of nonpositive curvature are called Hadamard manifolds.

\end{enumerate}
\end{remark}

We assume that $(M,g)$ satisfies the divergence property, i.e. 
 for any pair of geodesics  $c_1 \neq c_2$ in $( \wM, g)$ with $c_1(0) =c_2(0)$ 
we have
\begin{equation}\label{eqn:divergence}
\lim\limits_{t \to \infty} d(c_1(t), c_2(t)) = \infty.
\end{equation}
\begin{remark}
\begin{itemize}
\item
By a result of E. Hopf \cite{eH48} all closed non-flat surfaces without conjugate points have genus at least two. Since such surfaces 
carry a metric with negative curvature their fundamental groups are Gromov hyperbolic. Furthermore,  Green \cite{wG56} showed that surfaces without conjugate have the divergence property.
\item
Until now there is no example of a closed manifold without conjugate points known where the divergence property does not hold. A sufficient condition for the divergence property is the continuity of the stable Jacobi-tensors (see \cite{ES76}). In particular, this assumption holds if $(M,g)$ has non-positive curvature, or more generally no focal points.
\end{itemize}
\end{remark}
  The following notion was introduced by Eberlein  in \cite{pEb72} and  Eberlein and  O'Neill \cite{EO73}.
\begin{definition}
A simply connected Riemannian manifold $\wM$ without conjugate points 
is a \emph{(uniform) visibility manifold} if for every $\epsilon>0$ there exists $L>0$ such that whenever a geodesic $c\colon [a, b] \to\wM$ stays at distance at least $L$ from some point $p\in\wM$, then the angle sustained by $c$ at $p$ is less than $\epsilon$, that is
  \begin{equation*}
    \angle_p(c)=\sup_{a\leq s,t\leq b} \angle_p(c(s),c(t))<\epsilon.
  \end{equation*}
\end{definition}
The following Theorem is due to Ruggiero \cite{rR03},
  \begin{theorem}\label{thm:visibility}
Let $(M,g)$ be a closed manifold without conjugate points and Gromov hyperbolic fundamental group. Then $( \wM, g)$ is a visibility manifold if and only if $(M,g)$ has the divergence property.
\end{theorem}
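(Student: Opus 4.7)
The plan is to prove the two implications separately. Throughout, note that $(\wM, g)$ is itself $\delta$-hyperbolic for some $\delta \ge 0$: since $\pi_1(M)$ is Gromov hyperbolic and acts properly, cocompactly and by isometries on $\wM$, this follows from the remark after Definition~\ref{def:Hypgroups}.

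For \textbf{visibility $\Rightarrow$ divergence}, I argue by contrapositive. Suppose distinct geodesic rays $c_1, c_2 \colon [0, \infty) \to \wM$ from a common point $p$ fail to diverge, so there exist $D > 0$ and $t_n \to \infty$ with $d(c_1(t_n), c_2(t_n)) \le D$. No conjugate points forces the initial velocities to differ, so $\alpha := \angle_p(\dot c_1(0), \dot c_2(0)) > 0$, and the unique geodesic from $p$ to $c_i(t_n)$ is $c_i|_{[0, t_n]}$. Let $\sigma_n$ be the geodesic segment from $c_1(t_n)$ to $c_2(t_n)$; every point of $\sigma_n$ lies at distance $\ge t_n - D$ from $p$ by the triangle inequality, while evaluating at the two endpoints gives $\angle_p(\sigma_n) \ge \alpha$. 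Picking $L = L(\alpha/2)$ from the visibility hypothesis and $n$ with $t_n \ge L + D$ then forces $\angle_p(\sigma_n) < \alpha/2$, a contradiction.

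For \textbf{divergence $\Rightarrow$ visibility}, suppose visibility fails: there exist $\epsilon > 0$, $p \in \wM$, and geodesic segments $c_n \colon [a_n, b_n] \to \wM$ with $d(p, c_n) \to \infty$ but $\angle_p(c_n) \ge \epsilon$. Choose parameters realizing this angle, let $\gamma_{i,n}$ be the unique geodesic from $p$ to $q_{1,n}, q_{2,n} \in c_n$, and set $v_{i,n} := \dot\gamma_{i,n}(0) \in S_p \wM$. By compactness of the unit sphere, pass to a subsequence with $v_{i,n} \to v_i$ and $\angle(v_1, v_2) \ge \epsilon$; the corresponding rays $\gamma_1, \gamma_2$ from $p$ are distinct, so the divergence property gives $d(\gamma_1(t), \gamma_2(t)) \to \infty$. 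The contradiction comes from showing that $\gamma_1(\infty) = \gamma_2(\infty)$ in $\partial \wM$, so that Lemma~\ref{lem:GB}(3a) bounds $d(\gamma_1(t), \gamma_2(t)) \le 5\delta$ for large $t$. On one hand, the subsegment $[q_{1,n}, q_{2,n}] \subset c_n$ stays at distance $\ge n$ from $p$, so the standard estimate in $\delta$-hyperbolic spaces gives $(q_{1,n}\,|\,q_{2,n})_p \ge n - O(\delta) \to \infty$, and $q_{1,n}, q_{2,n}$ converge to the same boundary point $\xi \in \partial \wM$. On the other hand, continuous dependence of geodesics on initial conditions yields $\gamma_{i,n}(t) \to \gamma_i(t)$ uniformly on compact $t$-intervals, and since $d(\gamma_{i,n}(t), q_{i,n}) = d(p, q_{i,n}) - t$, a short computation gives
\[
(\gamma_i(t)\,|\,q_{i,n})_p \;=\; t + o(1) \quad \text{as } n \to \infty
\]
for each fixed $t$; a diagonal subsequence argument then shows $\gamma_i(\infty) = \xi$.

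The main obstacle is the interchange of limits in the hard direction: one extracts the initial vectors $v_i$ by sphere compactness without knowing a priori that the limit rays $\gamma_i$ are asymptotic to any common boundary point, and one must then match these rays with the Gromov-boundary limit $\xi$ of the independently extracted sequences $q_{i,n}$. The matching works only because of the combined rigidity from no conjugate points (a geodesic is determined by its initial velocity, so convergence of initial velocities upgrades to convergence of rays) and from $\delta$-hyperbolicity (sequences with pairwise Gromov products tending to infinity automatically share a boundary point).
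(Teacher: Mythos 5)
The paper itself gives no proof of Theorem~\ref{thm:visibility}; it is quoted from Ruggiero's work \cite{rR03}, so there is no internal argument to compare yours against. Your two implications follow the classical Eberlein--Ruggiero scheme and both are sound in substance. The easy direction (visibility $\Rightarrow$ divergence) is correct as written: uniqueness of connecting geodesics in $\wM$ makes $c_i|_{[0,t_n]}$ the geodesic from $p$ to $c_i(t_n)$, so the chord $\sigma_n$ subtends angle at least $\alpha$ while lying at distance at least $t_n-D$ from $p$, contradicting visibility. In the hard direction, your use of the Gromov product is legitimate and non-circular, since it relies only on $\delta$-hyperbolicity of $\wM$ (via \v{S}varc--Milnor and Theorem~\ref{thm:quasi-isometry}) and on Lemma~\ref{lem:GB}, not on the cone-topology identifications that the paper sets up only \emph{after} this theorem. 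Two small points you elide are standard: passing to subsequences so that $q_{1,n}$ and $q_{2,n}$ each converge in the Gromov bordification before concluding from $(q_{1,n}\,|\,q_{2,n})_p\to\infty$ that the limits coincide, and the identification of the sequential boundary point $\xi$ with the ray boundary of Definition~\ref{dfn:GB} so that Lemma~\ref{lem:GB}(3a) applies. (Alternatively, you can bypass boundaries entirely: two applications of the $\delta$-inequality for Gromov products give $(\gamma_1(t)\,|\,\gamma_2(t))_p\ge t-O(\delta)$, hence $d(\gamma_1(t),\gamma_2(t))=2t-2(\gamma_1(t)\,|\,\gamma_2(t))_p\le O(\delta)$, which already contradicts divergence.)

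The one genuine gap is the uniformity of visibility. The theorem asserts that $\wM$ is a \emph{uniform} visibility manifold: the constant $L=L(\epsilon)$ must work for every observation point $p\in\wM$ simultaneously. The negation of this is the existence of $\epsilon>0$ and sequences $p_n\in\wM$, $c_n$ with $d(p_n,c_n)\ge n$ and $\angle_{p_n}(c_n)\ge\epsilon$, with the base point varying; you instead negate visibility at a single fixed $p$, which only proves the pointwise version. This is repairable in two standard ways, but you must do one of them: either invoke Eberlein's result (\cite{pEb72}) that a simply connected manifold without conjugate points covering a compact manifold is uniformly visible as soon as it is visible at every point, or translate the $p_n$ into a fixed compact fundamental domain by deck transformations, pass to a subsequence $p_n\to p$, and rerun your compactness argument with $v_{i,n}\in S_{p_n}\wM$ converging in the compact set $S\overline{\FFF}$ (angles, continuous dependence of geodesics on initial conditions, and Gromov products all behave well under a bounded change of base point). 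Without one of these steps the argument establishes only non-uniform visibility, which is strictly weaker than the statement being proved.
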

\begin{remark}
Under the stronger assumption that $(M,g)$ is a closed manifold without conjugate points admitting a background metric of negative curvature this has been proved by Eberlein  \cite{pEb72}.
\end{remark}
Since $\wM$ is Gromov hyperbolic,  Lemma \ref{lem:GB} and the divergence property implies that the map
$f_p\colon S_p\wM \to \ideal$ defined by $f_p(v) = c_v(\infty)$ is a bijection.
The topology (sphere-topology) on $\partial \wM$ is defined such that $f_p$
becomes a homeomorphism.
Since for all $q \in \wM$ the map $f_q^{-1} f_p \colon S_p\wM \to S_q\wM$ is a homeomorphism, see \cite{pEb72},
the topology is independent on the reference point $p$.
The topologies on $\partial \wM$ and $\wM$
extend naturally
 to  $\cl (\wM): =  \wM\cup \partial \wM$
by requiring that the map
$\varphi\colon B_1(p) = \{v \in T_p \wM:   \|v\| \le 1\} \to \cl(\wM)$
defined by
\[
\varphi(v) = \begin{cases}
 \exp_p\left(\frac{v}{1-\|v\|}\right) & \|v\| < 1\\
f_p(v) & \|v\| = 1
\end{cases}
\]
is a homeomorphism. This topology, called the cone topology, was introduced by Eberlein
and O'Neill \cite{EO73} in the case of Hadamard manifolds and by Eberlein   \cite{pEb72} in the case of visibility manifolds. In
particular, $\cl( \wM) $ is homeomorphic to a closed ball in
$\mathbb{R}^n$. The relative topology on $\ideal$ coincides with the sphere topology, and the relative topology on $\wM$ coincides with the topology of $\wM$.

Note that for the case which we are considering
this compactification agrees with the compactification of  $\delta$-hyperbolic spaces do to Gromov \cite{mG87}.

For simply connected manifolds $X$ without conjugate points for $v\in SX$, the limit
\[
b_{v}(q) := \lim_{t\to\infty} \left( d(q, c_{v}(t)) - t\right)
\]
exists and is called the \emph{Busemann function} associated to $v$. In \cite{gK85}  it was shown that Busemann functions are of class $C^{1,1}$ provided the sectional curvature is uniformly bounded from below.
\begin{definition}\label{def:busemann}
Let $M$ be a closed manifold without conjugate points, divergence property and Gromov hyperbolic fundamental group. 
Then according to Lemma \ref{lem:GB} and the divergence property, 
for each $p\in X$ and $\xi \in \partial X$ there exists a uniquely determined vector $v\in S_p X$  such that $c_{v}(\infty) = \xi$.
We call $b_\xi(q,p) := b_{v}(q)$ the Busemann function based at $\xi$
and normalized by $b_\xi(p,p) =0$.
\end{definition}

\begin{remark}\label{rem:minmal}
The isometric action of $\Gamma=\pi_1(M)$ on $\wM$ extends to a continuous action on $\ideal$.
Since by \cite{pEb72} the geodesic flow is topologically transitive, every $\Gamma$-orbit in $\ideal$ is dense, i.e. the action on  $\ideal$ is minimal.
\end{remark}

\begin{proposition}\label{cor:busemann}
Let $M$ be a closed manifold without conjugate points, divergence property and Gromov hyperbolic fundamental group. 
Then the following holds
\begin{enumerate}
\item
For $p,q \in \wM$ and $\xi \in \ideal$ we have
$$
\lim\limits_{z \to \xi }d(q,z) -d(p,z) = b_p(q, \xi)
$$
\item
For all $p,q,z\in \wM$ we have
$$b_{q}(z,\xi) = b_p(z,\xi) - b_p(q,\xi)$$
In particular $b_{q}(z,\xi) = -  b_z(q,\xi)$
\end{enumerate}
\end{proposition}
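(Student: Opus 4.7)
The plan is to prove (1) first by sandwiching the quantity $d(q,z)-d(p,z)$ with matching upper and lower bounds, and to derive (2) as a short consequence.

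Fix a sequence $z_n\to\xi$ in $\cl(\wM)$, let $v_n\in S_p\wM$ be the initial direction of the geodesic from $p$ to $z_n$, and put $T_n = d(p,z_n)$. By the cone topology at $p$, $v_n\to v:=f_p^{-1}(\xi)$ and $T_n\to\infty$. For fixed $T>0$ and $n$ large enough that $T_n\geq T$, the point $\exp_p(Tv_n)$ lies on the geodesic from $p$ to $z_n$ at parameter $T$, so the triangle inequality gives
\begin{equation*}
d(q,z_n)-T_n \;\leq\; d(q,\exp_p(Tv_n)) - T.
\end{equation*}
Letting $n\to\infty$ with $T$ fixed and using continuity of $\exp_p$ yields $\limsup_n [d(q,z_n)-d(p,z_n)] \leq d(q,c_v(T))-T$, and then letting $T\to\infty$ and using that $T\mapsto d(q,c_v(T))-T$ is monotone non-increasing with limit $b_p(q,\xi)$ gives $\limsup_n[d(q,z_n)-d(p,z_n)]\leq b_p(q,\xi)$. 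Applying the same argument to $d(p,z_n)-d(q,z_n)$ yields $\limsup_n[d(p,z_n)-d(q,z_n)]\leq b_q(p,\xi)$, i.e.\ $\liminf_n[d(q,z_n)-d(p,z_n)]\geq -b_q(p,\xi)$.

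Closing the gap reduces to the symmetry $b_p(q,\xi) = -b_q(p,\xi)$. Since $M$ is closed, its sectional curvature is uniformly bounded from below, so the $C^{1,1}$-regularity of Busemann functions cited just before Definition \ref{def:busemann} applies. Combined with the facts that $b_p(\cdot,\xi)$ is $1$-Lipschitz and decreases at rate exactly one along the unique ray from each $z$ to $\xi$, this forces $\nabla b_p(\cdot,\xi)(z) = -\dot\alpha_z(0)$, where $\alpha_z$ is that ray. Since this gradient depends only on $z$ and $\xi$, the two functions $b_p(\cdot,\xi)$ and $b_q(\cdot,\xi)$ have identical gradients on the connected manifold $\wM$, hence differ by the constant $b_p(q,\xi)-b_q(q,\xi)=b_p(q,\xi)$; evaluating the resulting identity $b_p(z,\xi)-b_q(z,\xi)=b_p(q,\xi)$ at $z=p$ yields $b_q(p,\xi)=-b_p(q,\xi)$, so the sandwich collapses and (1) follows.

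With (1) in hand, part (2) is immediate: applying (1) to the three pairs $(p,z)$, $(p,q)$, $(q,z)$ and subtracting the limit obtained for $(p,q)$ from that for $(p,z)$ yields the limit for $(q,z)$, which is the cocycle $b_q(z,\xi)=b_p(z,\xi)-b_p(q,\xi)$; the special case $p=z$ together with $b_z(z,\xi)=0$ gives $b_q(z,\xi)=-b_z(q,\xi)$. I expect the main obstacle to be the verification of the rate-one decrease of $b_p(\cdot,\xi)$ along an arbitrary ray $\alpha_z$ (not only along $c_v$, where it is immediate from $b_p(c_v(t),\xi)=-t$): this amounts to the asymptotic estimate $d(\alpha_z(t), c_v(s)) = d(z, c_v(s)) - t + o(1)$ as $s\to\infty$ for fixed $t$, which is where the Gromov-hyperbolicity/visibility structure of $\wM$ enters essentially, via the cone-topology convergence at $z$ of the geodesic from $z$ to $c_v(s)$ to the ray $\alpha_z$ on compact sets.
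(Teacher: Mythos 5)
Your argument is correct, and it is self-contained where the paper is not: the paper disposes of this proposition by citing \cite{CKW21}. The upper bound in (1) and its mirror image are the standard triangle-inequality estimates; the real content is the antisymmetry $b_p(q,\xi)=-b_q(p,\xi)$ needed to collapse the sandwich, and there you take a genuinely different route. Instead of comparing the two asymptotic rays from $p$ and from $q$ via $\delta$-thin triangles and Lemma \ref{lem:GB} --- the purely metric argument one would expect in this Gromov-hyperbolic setting --- you invoke the $C^{1,1}$ regularity of Busemann functions from \cite{gK85} (legitimate here: $M$ closed gives the required curvature bound on $\wM$) and pin down $\grad b_p(\cdot,\xi)(z)=-\dot\alpha_z(0)$, so that all Busemann functions centred at $\xi$ have the same gradient field and differ by constants; this delivers the cocycle identity (2) first and then (1). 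The step you flag as the main obstacle --- unit-rate decrease of $b_p(\cdot,\xi)$ along the ray $\alpha_z$ from an arbitrary $z$ --- does go through exactly as you sketch: the lower bound $d(\alpha_z(t),c_v(s))\ge d(z,c_v(s))-t$ is trivial, while routing through $\sigma_s(t)$, where $\sigma_s$ is the geodesic from $z$ to $c_v(s)$, gives $d(\alpha_z(t),c_v(s))\le d(\alpha_z(t),\sigma_s(t))+d(z,c_v(s))-t$ with $d(\alpha_z(t),\sigma_s(t))\to 0$ because $\dot\sigma_s(0)\to\dot\alpha_z(0)$ in the cone topology based at $z$; the uniqueness of the ray from $z$ to $\xi$ (divergence property plus hyperbolicity, as the paper notes when defining $f_p$) is what identifies the limit ray with $\alpha_z$. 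What the metric argument buys is robustness --- no smooth structure, no curvature bound, and it survives in settings where Busemann functions are merely Lipschitz; what yours buys is that (2) drops out as an exact identity with no $\delta$-bookkeeping, after which (1) is soft. Your closing derivation of (2) from (1) is redundant (you already proved (2) en route), but it is not circular.
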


\begin{proof}
For a proof see e.g. \cite{CKW21}
\end{proof}


As stated in  Lemma \ref{lem:GB}, for a distinct pair $\xi, \eta \in \ideal $ there exists a in general not uniquely determined geodesic $c$ with $c(-\infty) = \xi$ and  $c(+\infty) = \eta$. In the more restrictive setting of no focal points (in particular, if $M$ has nonpositive curvature), any two distinct geodesics connecting $\eta\neq\xi\in \ideal$ must bound a flat strip in $\wM$, but this is no longer the case in our setting.
Let $\sqbd := \dididi$ and $P: S\wM \to \sqbd$ be the endpoint projection given by $P(v) =(v_-,v_+)$ where $v_- = c_v (-\infty) $ and
$v_+ = c_v (+\infty) $.  Of dynamical importance is the \emph{expansive set}  introduced in the introduction.  Alternatively, it can be defined by the set of flow lines with unique connection of there endpoints, more precisely:
\begin{definition}\label{dfn:expansiveset}
Let $(M,g)$ be a closed manifold without conjugate points and Gromov hyperbolic fundamental group. Then
the set
$$
\mathcal{\widetilde E} = 
\{ v\in S\wM \mid  P^{-1}(P(v)) = \{ \phi_t(v) : t \in \R \} \} 
$$
is called the expansive set on $S\wM$. The projection  of $\wE$ onto $SM$
is called the expansive set on $SM$ and is denoted by  $\EE$. Furthermore we call
$\partial^2 \wE := P(\wE) \subset \sqbd$ the expansive set at infinity.
\end{definition}
\begin{remark}
Note, that $v \in \wE$ is expansive if and only if for all  $w \in \EE$ with
$d(c_v(t), c_w(t)) \le \epsilon$ for some $\epsilon >0$ and all $t \in \R$ there exists $t_0 \in \R$ such that geodesics
$c_v(t) = c_w(t +t_0)$. For  $v \in \EE$  the same statement holds for  $0 <\epsilon \le \inj(M)$ where $\inj(M)$ is the injectivity radius of $M$.

Obviously, the expansive  sets  $\wE$ and $\EE$ are  invariant under the geodesic flow and 
  $\wE$ is invariant under the action of $\Gamma$ on $S \wM$.
Furthermore, the expansive set $\partial^2 \wE := P(\wE)$  at infinity is invariant under the diagonal action of
$\Gamma$ on $\sqbd$ and it consists of all $(\xi, \eta) \in  \sqbd$ with unique connecting geodesic.
\end{remark}
\section{Patterson--Sullivan measure and the MME}\label{sec:PS}
In this section we assume that $M$ is a closed Riemannian manifold without conjugate points having the divergence property of geodesic rays and whose fundamental group is Gromov hyperbolic. Since under those assumptions $\wM$ 
is a visibility manifold it follows as in \cite{CKW21} that the Patterson--Sullivan measure can be used to construct a measure of maximal entropy fully supported on $SM$.  However, to prove ergodicity we need a different argument which does not 
a-priori use the uniqueness of the MME.

\subsection{Poincar\'{e} series and the Patterson--Sullivan measure}
 
If $\Gamma$ denotes the group of deck transformations, for $p,q \in  \wM$ and $s \in \mathbb{R}$, we consider the Poincar\'{e} series
 \[
P(s,p,q) = \sum\limits_{\gamma \in \Gamma} e^{- sd(p,\gamma q)}.
\]
Since  $\wM$ is Gromov hyperbolic  it follows from \cite {mCo93}  that the series
converges for $s> h$ and diverges for $s \le h$, where $h$ is the topological entropy.
For $x \in \wM$ the set 
$\Lambda(\Gamma)$ of accumulation points of the orbit
 $\Gamma x$ in $ \wM$ is called the limit set. Since $\wM$ is cocompact we have $\Lambda(\Gamma) =
\partial \wM $. 
Fix $x \in \wM $, $s > h$ and consider for each $p \in \wM$  the measure
\begin{equation}\label{eqn:nupxs}
\nu_{p,x,s} = \frac1{P(s,x,x)} \sum\limits_{\gamma \in \Gamma} e^{-sd(p,\gamma x)} \delta_{\gamma x}
\end{equation}
where $\delta_y$ is the Dirac mass associated to $y \in \wM $.
Using the fact that
$e^{-s d(p,x)} e^{-s d(x,\gamma x)} \leq e^{-s d(p,\gamma x)} \leq e^{sd(p,x)} e^{-sd(x,\gamma x)}$ for every $x,p\in \wM$ and $\gamma\in \Gamma$, we see that
\begin{equation}\label{eqn:nu-weight}
e^{-sd(p,x)} \le \nu_{p,x,s} (\cl( \wM) ) \le e^{sd(p,x)};
\end{equation}
in particular, the $\nu_{p,x,s}$ are all finite.  Moreover, we clearly have
\begin{equation}\label{eqn:supp-nu}
\Gamma x \subset \supp \nu_{p,x,s} \subset \overline{\Gamma
x}.
\end{equation}
Now choose for a fixed $p \in \wM  $ and a weak limit $\lim\limits_{k\to \infty} \nu_{p,x,s_k} = : \nu_p$.

The divergence of the series $P(s,x,x)$ for $s = h$ and the
discreteness of $\Gamma$ yields that the support of
$\nu_{p}$ is contained in the limit set. Moreover, one obtains:

\begin{proposition}\label{prop:PS} There is a sequence \(s_k\to h\) as \(k\to\infty\) such that 
for every $p \in \wM$ the weak* limit $\lim\limits_{k\to \infty} \nu_{p,x,s_k} =:
\nu_p$ exists.
The  family of measures $\{\nu_p\}_{p \in \wM}$ has the following 
properties.
\begin{enumerate}[label=\upshape{(\alph{*})}]
\item\label{PS-a} $\{\nu_p\}_{p \in \wM}$ is $\Gamma$-equivariant: for all Borel sets $A \subset \partial \wM$, we have
\[
\nu_{\gamma p} (\gamma A) = \nu_p(A).
\]
\item\label{PS-c} $\frac{d \nu_q}{d\nu_p} (\xi) = e^{-h b_p(q, \xi)}$
for almost all $\xi  \in \partial \wM$, where $b_p(q,\xi)$ is as in Definition \ref{def:busemann}.
\item\label{PS-b} $\supp \ \nu_p = \partial \wM$ for all $p \in
\wM$.
\end{enumerate}
\end{proposition}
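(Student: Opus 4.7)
The plan is to read off all three properties from the explicit definition \eqref{eqn:nupxs} of the pre-limit measures, with \ref{PS-c} carrying the analytic weight. For \ref{PS-a}, I would argue directly on the pre-limits: re-indexing the sum defining $\gamma_*\nu_{p,x,s}$ by $\beta := \gamma\alpha$ and using that $\gamma$ is an isometry of $\wM$ yields $\gamma_*\nu_{p,x,s} = \nu_{\gamma p, x, s}$, which persists under the weak$^*$ limit along $s_k \downarrow h$.

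The key step is \ref{PS-c}. The crucial observation is that $\nu_{p,x,s}$ and $\nu_{q,x,s}$ live on the same discrete set $\Gamma x \subset \wM$, and on that set we have the \emph{exact} identity
\[
d\nu_{q,x,s}(y) \;=\; F_s(y)\, d\nu_{p,x,s}(y), \qquad F_s(y) := e^{-s[d(q,y) - d(p,y)]}.
\]
I would extend $F_s$ to $\cl\wM$ by setting $F_s(\xi) := e^{-s\, b_p(q,\xi)}$ for $\xi \in \partial\wM$. Then \ref{PS-c} reduces to two analytic points: (i) $F_h$ is continuous on $\cl\wM$, and (ii) $F_{s_k} \to F_h$ uniformly on $\cl\wM$. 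For (i), continuity at boundary points follows from Proposition~\ref{cor:busemann}(1) combined with continuity of $\xi \mapsto b_p(q,\xi)$, which in turn rests on the visibility property provided by Theorem~\ref{thm:visibility}. For (ii), the uniform bound $|d(q,y) - d(p,y)|,\, |b_p(q,\xi)| \le d(p,q)$ upgrades the pointwise convergence $F_{s_k} \to F_h$ to uniform convergence via the elementary estimate $|e^{-s\sigma} - e^{-h\sigma}| \le |s-h|\cdot|\sigma|\cdot e^{\max(s,h)|\sigma|}$. Given (i) and (ii), for any $f \in C(\cl\wM)$,
\[
\int f\, d\nu_q \;=\; \lim_k \int f\, d\nu_{q,x,s_k} \;=\; \lim_k \int f\, F_{s_k}\, d\nu_{p,x,s_k} \;=\; \int f\, F_h\, d\nu_p,
\]
which gives $d\nu_q = F_h\, d\nu_p$, i.e.\ \ref{PS-c}. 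The same reweighting also shows that the weak$^*$ limit $\nu_q$ actually exists along the single sequence $s_k$ used to extract any one reference limit $\nu_x$, which is what the statement of the proposition requires. I expect the hardest sub-step to be part (i), i.e.\ verifying continuity of $b_p(q,\cdot)$ on $\partial\wM$ in the cone topology, since this is where the visibility hypothesis genuinely enters.

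Property \ref{PS-b} then drops out of \ref{PS-a} and \ref{PS-c}. Indeed, the density $F_h$ is strictly positive, so $\nu_p$ and $\nu_q$ are mutually absolutely continuous and share a common support $S \subset \partial\wM$; by \ref{PS-a} this $S$ is $\Gamma$-invariant; and $S$ is nonempty, since the mass bounds \eqref{eqn:nu-weight} together with divergence of the Poincar\'e series $P(s,x,x)$ at $s=h$ forbid $\nu_p \equiv 0$. By minimality of the $\Gamma$-action on $\partial\wM$ recorded in Remark~\ref{rem:minmal}, the only such set is $S = \partial\wM$.
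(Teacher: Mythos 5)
Your proposal is correct and follows the same route as the proof the paper points to (it defers to \cite{CKW21}, which carries out exactly this standard Patterson--Sullivan construction): extract one weak$^*$ limit by compactness, transport it to all base points via the exact reweighting $d\nu_{q,x,s}=e^{-s[d(q,\cdot)-d(p,\cdot)]}\,d\nu_{p,x,s}$ on $\Gamma x$, pass to the limit using the boundary continuity supplied by Proposition \ref{cor:busemann}, and get full support from quasi-invariance plus minimality of the $\Gamma$-action (Remark \ref{rem:minmal}). You also correctly isolate the only genuinely geometric input, namely the continuity of $b_p(q,\cdot)$ on $\cl(\wM)$, which is exactly where the visibility/divergence hypotheses enter.
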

\begin{proof} 
For a proof see e.g. \cite{CKW21}.
\end{proof} 
For $\xi \in \partial  \wM$ and $x \in \wM$
consider the projections
\[
pr_\xi \colon  \wM \to \partial  \wM
\quad\text{and}\quad
pr_x \colon  \wM \setminus \{x\} \to  \partial  \wM
\]
along geodesics emanating from $\xi$ and $x$, respectively. That is, $pr_\xi(q)
= c_{\xi, q}(\infty)$, where $c_{\xi,q}$ is the geodesic with
$c_{\xi,q}(-\infty) = \xi$,
 $c_{\xi, q}(0) = q$ and $pr_x(q) =
c_{q,x}(\infty)$, where $c_{q,x}$ is the geodesic
 with $c_{q,x}(0) = q$,
$c_{q,x}(-d(q,x))=x$. 
\begin{lemma}\label{lem:open-shadow}
Consider  $R =1 +2\delta$ where $\delta$ is  the Gromov hyperbolicity constant for  $\wM$. Then 
for all $p \in \wM$ and $x \in  \cl(\wM) \setminus B(p, R) $, the \textit{shadow} 
$pr_x B(p,R)$  of the open geodesic ball $B(p,R)$ with center $p$ and radius $R$ contains an open set in $\partial  \wM$.
\end{lemma}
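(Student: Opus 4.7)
I produce, in both cases $x\in\wM$ and $x\in\ideal$, an explicit point $\xi_0\in pr_x B(p,R)$ together with a sphere-topology neighborhood of $\xi_0$ contained in $pr_x B(p,R)$. In each case take $\xi_0 := pr_x(p)$, which lies in $pr_x B(p,R)$ by construction since the distinguished geodesic through $p$ realizing $pr_x(p)$ passes through the point $p\in B(p,R)$.

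\emph{Case $x\in\wM$.} Setting $t_0=d(x,p)$, we have $c_{x,\xi_0}(t_0)=p$. Since $f_x\colon S_x\wM\to\ideal$ is a homeomorphism and the geodesic flow on $S\wM$ is continuous, the evaluation $\xi\mapsto c_{x,\xi}(t_0)$ from $\ideal$ to $\wM$ is continuous; hence for some open neighborhood $U$ of $\xi_0$ we have $c_{x,\xi}(t_0)\in B(p,R)$ for every $\xi\in U$, so $U\subset pr_x B(p,R)$.

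\emph{Case $x=\eta\in\ideal$.} The unique ray from $p$ to $\eta$ (supplied by Lemma \ref{lem:GB}(2) and the bijectivity of $f_p$) extends backward through $p$ in the complete manifold $\wM$ to a bi-infinite geodesic $c\colon\R\to\wM$ with $c(-\infty)=\eta$, $c(0)=p$, and $c(+\infty)=:\xi_0$. For $\xi$ close to $\xi_0$, choose bi-infinite geodesics $c_\xi$ and $\gamma_{\xi_0,\xi}$ with endpoints $(\eta,\xi)$ and $(\xi_0,\xi)$ respectively (Lemma \ref{lem:GB}(1)). Applying the $\delta$-thin triangle condition to the ideal triangle $(\eta,\xi_0,\xi)$—formalized via finite sub-triangles whose vertices recede to infinity along $c$, $c_\xi$, $\gamma_{\xi_0,\xi}$ and then passing to the limit—yields $d(p,c_\xi\cup\gamma_{\xi_0,\xi})\le\delta$, since $p$ lies on the third side $c$. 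Because $R>\delta$, it then suffices to show $d(p,\gamma_{\xi_0,\xi})>\delta$ once $\xi$ is close enough to $\xi_0$: this forces $d(p,c_\xi)\le\delta<R$, so $c_\xi$ meets $B(p,R)$ and $\xi\in pr_\eta B(p,R)$.

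The heart of the proof is the escape estimate $d(p,\gamma_{\xi_0,\xi})\to\infty$ as $\xi\to\xi_0$, uniformly over the non-unique choice of $\gamma_{\xi_0,\xi}$. I would prove it by contradiction and compactness: if there were $\xi_n\to\xi_0$ and bi-infinite geodesics $\gamma_n$ from $\xi_n$ to $\xi_0$ with $d(p,\gamma_n)\le D$ for all $n$, parametrize so that $\gamma_n(0)$ realizes the distance to $p$; then $\gamma_n(0)$ lies in $\overline{B(p,D)}$ and $\dot\gamma_n(0)$ in a compact subset of $S\wM$, so after passing to a subsequence $\gamma_n(0)\to q\in\wM$ and $\dot\gamma_n(0)\to v\in S_q\wM$. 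Continuity of the endpoint-at-infinity map $v\mapsto c_v(\pm\infty)$ on $S\wM$—valid in visibility manifolds by Theorem \ref{thm:visibility}—then forces $c_v(-\infty)=c_v(+\infty)=\xi_0$; but that gives $f_q(v)=f_q(-v)=\xi_0$, contradicting the bijectivity of $f_q\colon S_q\wM\to\ideal$. This yields the escape estimate and finishes the argument.
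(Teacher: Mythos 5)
Your argument is correct in its essential structure, but it takes a genuinely different route from the paper's. The paper treats the two cases $x\in\wM$ and $x\in\ideal$ uniformly: it considers the angular neighborhood $A_\varepsilon(v)\subset\ideal$ of the direction $v$ at $p$ pointing away from $x$, chooses $\varepsilon$ so that nearby rays stay within distance $1$ of $c_v$ at times $\pm(\delta+1)$, and then invokes an external quantitative result (Corollary 4.5 of \cite{gK12}) to conclude that for every $\eta$ in this neighborhood the geodesics from $x$ to the points $c_w(t)$ must pass through $B(p,R)$; a limit $t\to\infty$ then finishes the proof. You instead split into cases: for $x\in\wM$ your argument is pure continuity (the homeomorphism $f_x$ composed with the exponential map), which is clean, correct, and in fact works for every radius $R>0$; for $x=\eta\in\ideal$ you replace the external corollary by thin ideal triangles plus a compactness/``escape to infinity'' estimate for geodesics joining $\xi_0$ to nearby $\xi$. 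Your version has the virtue of being self-contained, and the escape estimate via the injectivity of $f_q$ is the right key idea.

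Two points need tightening. First, the claim $d(p,c_\xi\cup\gamma_{\xi_0,\xi})\le\delta$ with the constant $\delta$ itself is not automatic for \emph{pre-chosen} sides: the limiting argument with finite triangles yields this bound only for the limit geodesics of the finite sides, and transferring it to other geodesics with the same ideal endpoints costs an additive penalty (roughly the Hausdorff distance between two bi-infinite geodesics with equal endpoints, of order $2\delta$), which for large $\delta$ could push the bound past $R=1+2\delta$. The fix is to take $c_\xi$ and $\gamma_{\xi_0,\xi}$ to \emph{be} those limit geodesics; this is harmless because your escape estimate is uniform over all geodesics from $\xi_0$ to $\xi$, and because any geodesic with backward endpoint $\eta$ meeting $B(p,R)$ at a point $q$ coincides with $c_{\eta,q}$ and hence certifies $\xi\in pr_\eta B(p,R)$. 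Second, the escape estimate uses joint continuity, in footpoint and direction, of $v\mapsto (c_v(-\infty),c_v(+\infty))$ on $S\wM$; this is standard for uniform visibility manifolds (Eberlein) and is available here via Theorem \ref{thm:visibility}, but it is not stated in the paper and should be cited explicitly.
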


\begin{proof} 

For \(x\in\cl{(\wM)} \setminus B(p,R)\) and \(p\in\wM\), let $c_0: \R \to \wM$ denote the unique geodesic with $c_0(0) =p$  and $c_0(-d(x,p)) =x$ if $x \in \wM$ and $c_0(-\infty) =x$ if $x \in \partial \wM$. Define 
$v := \dot c_0(0)$. By the definition of the topology on \(\partial \wM\),  for every  \(v\in S_p\wM\) and \(\varepsilon>0\) we have \(A_{\varepsilon}(v):=\{c_w(\infty): \angle_p(v,w)<\varepsilon\}\) is open in \(\partial\wM\). 
The continuous dependence of geodesics on initial conditions implies that we can choose $\varepsilon>0$ such that
$d(c_v( \pm (\delta +1) , c_w( \pm (\delta +1)) \le 1$ for all $ w \in S_p \wM$ with $ \angle_p(v,w)<\varepsilon $.
 For each $\eta =c_w(\infty)  \in A_{\varepsilon}(v)$ it follows from Corollary 4.5 in \cite{gK12}
 that for all $t >0$  and geodesics  $c_{t, w}$  with $c_{t, w}(T) = c_w(t)$ and  $c_{t,w}(0) =x$  for $x \in \wM$ resp. 
 $c_{t,w}(-\infty) =x$  for $x \in \partial \wM$  there exists $s \in [0,T]$  for $x \in \wM$  resp.  $s \in (-\infty ,T]$ for $x \in \partial \wM$  
with $c_{t,w} (s) \in B(p,R)$. Choose a sequence a sequence $t_n \to \infty$ such that the geodesics $c_{t_n,w}$ converge to a geodesic $c$. Since $c_{t_n,w} (T_n) =  c_w(t_n) \to \eta$ and  $c_{t_n,w}(s_n) \in   B(p,R)$ we have  $c (\infty)  = \eta $ and  there exists some $s_0 \in \R$ with  $c(s_0) \in B(p,R)$. 
This implies that \(A_{\varepsilon}(v)\subset pr_x B(p,R)\).

\end{proof}
Lemma \ref{lem:open-shadow} yields:

\begin{proposition}\label{prop:PS-bds}
Let $\{\nu_p\}_{p\in  \wM }$ be the Patterson--Sullivan measures and fix $\rho\geq R$, where $R$ is as in Lemma \ref{lem:open-shadow}.
\begin{enumerate}[label=\upshape{(\alph{*})}]
\item\label{bds-a}
There exists $\ell=\ell(\rho) > 0$ such that for every $x\in \cl(\wM) \setminus  B(p,\rho)$, we have
\[
\nu_p(pr_x B(p,\rho)) \ge \ell.
\]
\item\label{bds-b}
There is a constant $b = b(\rho)$
such that for all $x \in \wM$ and $\xi = c_{p,x}(-\infty)$,
\[
\frac{1}{b} e^{-h d(p,x)} \le \nu_p(pr_\xi(B(x,\rho)) \le be^{- h d(p,x)}.
\]
\item\label{bds-c}
A similar estimate holds if we project from $p \in \wM$, namely there is a constant
$a = a(\rho) > 0$ such that for all $p\in \wM$,
\[
\frac{1}{a} e^{-h d(p,x)} \le \nu_p(pr_p(B(x,\rho))) \le a e^{-h
d(p,x)}.
\]
\end{enumerate}
\end{proposition}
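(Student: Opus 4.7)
The plan is to prove Proposition \ref{prop:PS-bds} by first establishing the uniform lower bound (a) via a compactness argument combined with the full support of $\nu_p$, and then deducing (b) and (c) from (a) using the quasi-conformality relation \ref{PS-c} together with a Busemann estimate on shadows.

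For part (a), I would argue by contradiction. If no such $\ell$ exists there is a sequence $x_n \in \cl(\wM) \setminus B(p,\rho)$ with $\nu_p(pr_{x_n}B(p,\rho)) \to 0$; by compactness of $\cl(\wM)$ in the cone topology recalled in subsection \ref{subsec:Noconjugatepts-Ghyp}, pass to a subsequence with $x_n \to x_\infty \in \cl(\wM) \setminus B(p,\rho)$. Applying Lemma \ref{lem:open-shadow} to $x_\infty$ produces an open set $U \subset \partial\wM$ contained in $pr_{x_\infty}B(p,\rho)$; in fact, shrinking $U$ slightly, one may arrange that every $\eta \in U$ is an endpoint of a geodesic from $x_\infty$ passing through the interior ball $B(p,\rho/2)$. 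By the visibility property (Theorem \ref{thm:visibility}) and continuous dependence of geodesics on their endpoints in the cone topology, for all sufficiently large $n$ the geodesic from $x_n$ to any $\eta \in U$ still enters $B(p,\rho)$, so $U \subset pr_{x_n}B(p,\rho)$. Since $\nu_p$ has full support by Proposition \ref{prop:PS}\ref{PS-b}, $\nu_p(U) > 0$, contradicting $\nu_p(pr_{x_n}B(p,\rho)) \to 0$.

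For parts (b) and (c), both reduce to the identity (rewriting \ref{PS-c})
\[
\nu_p(A) = \int_A e^{h b_p(x,\eta)}\, d\nu_x(\eta),
\]
together with a uniform estimate of the form $|b_p(x,\eta) + d(p,x)| \le C(\rho,\delta)$ on the relevant shadow. For (c), if $\eta \in pr_p(B(x,\rho))$ then the ray $c_{p,\eta}$ passes through some $q \in B(x,\rho)$, and a direct triangle-inequality computation on $\lim_{t\to\infty}[d(x,c_{p,\eta}(t)) - t]$ yields $|b_p(x,\eta) + d(p,x)| \le 2\rho$. For (b), the harder case, I would show that the ray from $p$ to $\eta$ still passes within $O(\delta + \rho)$ of $x$: this follows from Gromov thin-triangle reasoning applied to the ideal triangle with vertices $p,\xi,\eta$ (one side is the geodesic from $\xi$ to $\eta$, which by hypothesis passes through $B(x,\rho)$), combined with Lemma \ref{lem:GB}(3) comparing the ray from $p$ to $\eta$ with the forward subray of that side, and finally Lemma \ref{lem:endpts-suffice} to upgrade the endpoint closeness to a uniform fellow-traveling bound that allows $x$ to be captured near the ray from $p$ to $\eta$.

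Once the Busemann estimate is in hand, the two-sided bounds in (b) and (c) reduce to uniform upper and lower bounds on $\nu_x(\text{shadow})$. The lower bound is exactly part (a) with $p$ and $x$ swapped, valid whenever the distance $d(p,x) \ge \rho$; the complementary small-distance case is handled by absorbing factors into the constants $a,b$. The upper bound is obtained from cocompactness and the $\Gamma$-equivariance \ref{PS-a}: fix a compact fundamental domain $D$ and let $C_0 := \sup_{q\in D} \nu_q(\partial\wM)$, which is finite because \ref{PS-c} and the 1-Lipschitz property of the Busemann function give $\nu_q(\partial\wM) \le e^{h d(p,q)}\nu_p(\partial\wM)$. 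For arbitrary $x$, choose $\gamma \in \Gamma$ with $\gamma x \in D$; then $\nu_x(S) = \nu_{\gamma x}(\gamma S) \le C_0$ for every Borel $S \subset \partial\wM$. The main obstacle is the continuity argument in (a), because the base-point $x_n$ of the projection is itself varying (and possibly tending to infinity), so one must carefully show that the shadows $pr_{x_n}B(p,\rho)$ eventually contain a common open set; this is where visibility and the angle-based description of the cone topology, already exploited in Lemma \ref{lem:open-shadow}, do the real work.
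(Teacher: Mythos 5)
Your proposal follows essentially the same route as the paper's argument (which is only sketched in the text and deferred to \cite{CKW21}): part (a) by a compactness/contradiction argument resting on Lemma \ref{lem:open-shadow}, the full support of $\nu_p$ from Proposition \ref{prop:PS}\ref{PS-b}, and continuity of the projections; parts (b) and (c) by combining the Radon--Nikodym cocycle of Proposition \ref{prop:PS}\ref{PS-c} with a uniform estimate $|b_x(p,\eta)-d(p,x)|\le C(\rho,\delta)$ over the relevant shadow, the thin-triangle and fellow-travelling lemmas (Lemma \ref{lem:endpts-suffice}, Lemma \ref{lem:GB}) supplying that estimate in the harder case (b).

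One point needs repair. As written, your contradiction argument for (a) keeps the basepoint $p$ fixed and only lets the viewpoint $x_n$ vary, so it a priori produces a constant $\ell$ depending on $p$. But when you later invoke ``part (a) with $p$ and $x$ swapped'' to obtain the lower bounds in (b) and (c), the basepoint of the measure is the arbitrary point $x\in\wM$, so you need $\ell$ uniform over all basepoints. The cure is the one you already deploy for the upper bound: by the $\Gamma$-equivariance of Proposition \ref{prop:PS}\ref{PS-a} it suffices to treat basepoints in a compact fundamental domain $K$, and the compactness argument must then be run with both sequences $p_n\in K$ and $x_n\in\cl(\wM)$ varying simultaneously; this is exactly how the argument in \cite{CKW21} is organized. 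With that adjustment your proof goes through.
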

\begin{proof}
For a proof see \cite{CKW21}.
 \end{proof}
 
  \subsection{Construction of the measure of maximal entropy} \label{sec: PS measure}
Now we construct an
invariant measure for the geodesic flow using the Patterson-Sullivan measures $\nu_p$. We follow the approach in \cite{CKW21}, which was originally carried out in negative curvature in \cite{vK90} and in \cite{gK98} for rank 1 manifolds. However, as was explained in \cite{CKW21}, the construction in the present setting introduces some technical difficulties 
which we managed to overcome. We refer to \cite{CKW21} for details.

By Proposition \ref{prop:PS}\ref{PS-c},  $\nu_p$ is
 $\Gamma$-quasi-invariant with Radon-Nikodym cocycle
\begin{equation}\label{eqn:RN}
f(\gamma, \xi) = e^{-h b_p(\gamma^{-1} p, \xi)} = \frac{d\nu_{\gamma^{-1} p}}{d\nu_p}(\xi).
\end{equation}
For $(\xi, \eta) \in \sqbd := \dididi$ consider
\begin{equation}\label{eqn:beta-p}
\beta_p(\xi, \eta) = - (b_p(q, \xi) + b_p(q, \eta)) \;,
\end{equation}
where $q$ is a point on a geodesic $c$ connecting $\xi$ and $\eta$.
In geometrical terms $\beta_p(\xi, \eta)$ is the length of the
segment $c$ which is cut out by the horoballs through
$(p, \xi)$ and $(p, \eta)$.
Since $\grad_q b_p(q,\xi) =- \grad_q b_p(q,\eta)$ for all points on geodesics
connecting $\xi$ and $\eta$, this number is independent of the choice of
$q$. An easy computation using \eqref{eqn:RN}, see \cite[Lemma 2.4]{gK98}, shows:
\begin{proposition}\label{prop: measure barmu}
For $p \in \wM$, the measure $\bar\mu$ on $\sqbd$ defined by
\[
d \bar\mu (\xi, \eta) = e^{h\beta_p(\xi,\eta)}
 d\nu_p(\xi) d\nu_p(\eta)
\]
is invariant under the diagonal action of $\Gamma$ and ergodic. 
\end{proposition}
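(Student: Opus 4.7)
The proposition splits into $\Gamma$-invariance and ergodicity. Invariance is a direct cocycle calculation; ergodicity follows the Hopf--Tsuji--Sullivan scheme, as carried out in \cite{gK98} and \cite{CKW21}.

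For \textbf{invariance}, I would fix $\gamma\in\Gamma$ and compute $(\gamma,\gamma)_{*}\bar\mu$ by Fubini. Proposition \ref{prop:PS}\ref{PS-c} gives the Radon--Nikodym derivative
\[
\frac{d\nu_{\gamma^{-1}p}}{d\nu_p}(\xi) = e^{-h b_p(\gamma^{-1}p,\xi)},
\]
so changing variables $(\xi,\eta)\mapsto(\gamma\xi,\gamma\eta)$ in the integral defining $\bar\mu$ produces a factor $e^{-hb_p(\gamma^{-1}p,\xi) - hb_p(\gamma^{-1}p,\eta)}$. Invariance therefore reduces to the single identity
\[
\beta_p(\gamma\xi,\gamma\eta) = \beta_p(\xi,\eta) + b_p(\gamma^{-1}p,\xi) + b_p(\gamma^{-1}p,\eta).
\]
To verify it I would pick $q$ on a geodesic from $\xi$ to $\eta$, so that $\gamma q$ lies on a geodesic from $\gamma\xi$ to $\gamma\eta$, and apply the isometric equivariance $b_p(\gamma q,\gamma\xi) = b_{\gamma^{-1}p}(q,\xi)$ together with the basepoint-change formula $b_{\gamma^{-1}p}(q,\xi) = b_p(q,\xi) - b_p(\gamma^{-1}p,\xi)$ from Proposition \ref{cor:busemann}(2); summing the two corresponding expressions for $\xi$ and $\eta$ gives the required equality.

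For \textbf{ergodicity}, the essential input is the divergence of the Poincar\'e series at $s=h$, recorded at the beginning of this section via \cite{mCo93}. A Borel--Cantelli argument applied to the shadows $pr_p(B(\gamma p,\rho))$, together with the lower mass estimate in Proposition \ref{prop:PS-bds}\ref{bds-c}, shows that $\nu_p$ is supported on the conical limit set $\Lambda_c\subset\ideal$ (the set of boundary points $\xi$ such that some ray to $\xi$ passes within a bounded distance of infinitely many orbit points $\gamma p$). Assuming $A\subset\sqbd$ is $\Gamma$-invariant with $\bar\mu(A)>0$, I would take a $\bar\mu$-density point $(\xi_0,\eta_0)$ with both coordinates in $\Lambda_c$, pull back along a sequence $\gamma_n\in\Gamma$ whose orbit $\gamma_n^{-1}p$ converges along a ray to $\eta_0$, and use the shadow lemma to compare $\bar\mu$-mass on these pulled-back product boxes with the $\nu_p$-mass of shadows of balls $B(\gamma_n p,\rho)$. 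The resulting uniform comparability, combined with minimality of the $\Gamma$-action on $\ideal$ (Remark \ref{rem:minmal}), forces $A$ to have full $\bar\mu$-measure.

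The \textbf{main obstacle} is the ergodicity step rather than the algebraic invariance. In genuinely negatively curved or Hadamard settings the flat strip theorem and convexity of distance functions make shadow estimates and Busemann-cocycle bounds sharp and uniform, and the endpoint projection $P\colon S\wM\to\sqbd$ is essentially injective, giving $\bar\mu$ a clean local product structure. Here $P$ may fail to be injective, so the density-point argument must be combined with the weaker but still uniform shadow estimates supplied by the visibility property of Theorem \ref{thm:visibility}. Ensuring that the Hopf--Tsuji--Sullivan argument goes through under these weaker estimates is the technical core of the proof, and the approach essentially tracks that of \cite{gK98} and \cite{CKW21}.
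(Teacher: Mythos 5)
Your invariance argument is correct and is exactly the computation the paper has in mind: the paper disposes of $\Gamma$-invariance by citing \cite[Lemma 2.4]{gK98} as ``an easy computation using \eqref{eqn:RN}'', and the identity you reduce to, $\beta_p(\gamma\xi,\gamma\eta)=\beta_p(\xi,\eta)+b_p(\gamma^{-1}p,\xi)+b_p(\gamma^{-1}p,\eta)$, does follow from isometric equivariance of Busemann functions together with Proposition \ref{cor:busemann}(2), and it cancels precisely against the Radon--Nikodym factors coming from Proposition \ref{prop:PS}\ref{PS-c}. That half of your proposal is complete.

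The ergodicity half is where the gap lies. The paper does not prove ergodicity at all: it invokes \cite[Theorem 1.4]{BF17} (and \cite{CDST25}), i.e.\ a general double-ergodicity theorem for Patterson--Sullivan measure classes on boundaries of Gromov hyperbolic spaces. Your sketch assembles the right ingredients for a Hopf--Tsuji--Sullivan proof (divergence of the Poincar\'e series at $s=h$, full $\nu_p$-measure of the conical limit set via Borel--Cantelli and the shadow estimates of Proposition \ref{prop:PS-bds}), but the decisive step --- ``uniform comparability, combined with minimality, forces $A$ to have full $\bar\mu$-measure'' --- is asserted rather than proved, and it is exactly the hard part. Two concrete issues: first, a density-point argument on $\ideal$ requires a Lebesgue differentiation/Vitali covering theory for the family of shadows, which must be established and is not available off the shelf here; second, $A$ is invariant only under the \emph{diagonal} action, so pulling back by $\gamma_n$ with $\gamma_n^{-1}p$ converging conically to $\eta_0$ moves both factors simultaneously, and one must decouple them (typically by showing $A$ contains, mod null sets, a product $B\times\ideal$ and then repeating in the other variable) before minimality can be brought to bear. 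Finally, your attribution of this step to \cite{gK98} and \cite{CKW21} is misleading: the paper states explicitly at the start of Section \ref{sec:PS} that it needs an ergodicity argument which does \emph{not} a priori use uniqueness of the MME, precisely because in \cite{CKW21} ergodicity and mixing were deduced \emph{from} uniqueness --- a route that would be circular here. So the correct resolution is either to cite \cite{BF17} as the paper does, or to carry out the full Hopf--Tsuji--Sullivan argument, which your outline starts but does not finish.
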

\begin{proof}
The proof of the $\Gamma$-invariance follows from \eqref{eqn:RN}, see \cite[Lemma 2.4]{gK98}.
The ergodicity is proved in \cite[Theorem 1.4]{BF17}. See also \cite{CDST25}  for a proof in a more general context.
\end{proof}

\begin{theorem}\label{thm:measure mu}
Let $(M,g)$ be a smooth closed Riemannian manifold without conjugate points with divergence property and Gromov hyperbolic fundamental group. Then there exists a measure of maximal entropy $\mu$ for the geodesic flow on $SM$ whose lift $\tilde \mu$ to
$S\wM$ is of the form
\[
\tilde\mu(A) = \int_{\sqbd} 
\mu_{\xi,\eta}(A) \,d\bar\mu(\xi,\eta).
\]
where  $\mu_{\xi,\eta}$ is supported on  $P^{-1}(\xi,\eta)$. 

Assume additionally that either for all non-expansive measures 
$\nu\in \MMM_\phi(SM)$ (i.e. $\nu(\EE) =0 $)  we have $h_\nu(\phi) < \htop( \phi)$
 or the expansive  set $\EE$ has non-trivial interior in $SM$.
Then $\partial ^2\wE$ has full measure w.r.t. $\bar \mu$ and therefore
$P^{-1}(\xi,\eta)$ is a single geodesic for $\bar\mu$-a.e.\ $(\xi,\eta) \in \sqbd$. Furthermore, for all measurable sets $A \subset S\wM$
we have
$$
\mu_{\xi,\eta}(A) =  \Leb \{ t \in \R : \phi_t(v) \in A \}.
 $$
 for $v \in P^{-1}(\xi,\eta)$, and hence $\mu(\EE) = 1$. Furthermore, the corresponding normalized measure $\mu$ on $SM$ is mixing, has full support in $SM$.
\end{theorem}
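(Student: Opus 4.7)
The plan is to follow the Kaimanovich--Knieper construction from \cite{gK98,CKW21}, carefully adapted to the present coarse-hyperbolic setting. For each $(\xi,\eta)\in\sqbd$ the fiber $P^{-1}(\xi,\eta)\subset S\wM$ is flow-invariant: a single orbit if $(\xi,\eta)\in\partial^2\wE$, and possibly a larger strip otherwise. I would make a Borel-measurable, $\Gamma$-equivariant choice of a flow-invariant measure $\mu_{\xi,\eta}$ supported on $P^{-1}(\xi,\eta)$, which on $\partial^2\wE$ is forced to be arc length along the unique connecting geodesic. Integrating against $\bar\mu$ defines $\tilde\mu$ on $S\wM$: flow-invariance is immediate from the fiberwise choice, $\Gamma$-invariance follows from the $\Gamma$-invariance of $\bar\mu$ and equivariance of the selection, and a fundamental-domain argument (using that $\Gamma$ acts cocompactly) yields that $\tilde\mu$ descends to a finite measure on $SM$, which after normalization is a probability measure $\mu$.

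The main step is to show that $\mu$ is an MME. Since $\htop(\phi)=\hvol(g)=h$ by Freire--Ma\~n\'e together with \eqref{eqn:co93}, it suffices to prove $h_\mu(\phi)\ge h$. I would do this via the Brin--Katok formula after estimating the $\mu$-mass of Bowen balls. Using Lemma \ref{lem:endpts-suffice} together with the visibility property (Theorem \ref{thm:visibility}), a Bowen ball $B_n(v,r)$ can be sandwiched between inverse shadows at the two endpoints of the corresponding geodesic arc; the shadow estimates of Proposition \ref{prop:PS-bds}(b)--(c) combined with the product structure of $\bar\mu$ and the fact that conditional arc length restricted to a transverse section contributes $O(1)$ then yield $\mu(B_n(v,r))\le Ce^{-hn}$. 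This is the most delicate technical step and the principal obstacle, since there is no negatively-curved background metric as in \cite{CKW21} to calibrate the shadow-to-Bowen-ball comparison; only the Gromov-hyperbolic geometry of $\wM$ itself and the visibility property are available.

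Under either additional hypothesis I would then establish $\bar\mu(\partial^2\wE)=1$. Under the entropy-gap assumption, suppose toward contradiction that $\mu(\EE)<1$. Then $\EE$ and its complement are both flow-invariant; if both have positive $\mu$-measure, decompose $\mu$ as the convex combination of the two normalized restrictions and use affine-ness of entropy together with $h_\nu(\phi)<\htop(\phi)$ on the non-expansive part to conclude $h_\mu(\phi)<\htop(\phi)$, contradicting the MME property of $\mu$; the case $\mu(\EE)=0$ is direct. Alternatively, if $\EE$ has non-empty interior, full support of $\mu$ (via $\supp\nu_p=\partial\wM$) gives $\mu(\EE)>0$; since $\mu_{\xi,\eta}(\wE)>0$ forces $(\xi,\eta)\in\partial^2\wE$ by the defining property of the expansive set, we obtain $\bar\mu(\partial^2\wE)>0$, and the $\Gamma$-ergodicity of $\bar\mu$ from Proposition \ref{prop: measure barmu} combined with $\Gamma$-invariance of $\partial^2\wE$ upgrades this to full $\bar\mu$-measure.

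Finally, once $\bar\mu(\partial^2\wE)=1$, $\bar\mu$-a.e.\ fiber $P^{-1}(\xi,\eta)$ is a single orbit, and flow-invariance of $\mu_{\xi,\eta}$ forces it to coincide with arc length along that orbit; hence the structural statement, $\mu(\EE)=1$, and full support of $\mu$ on $SM$ (from full support of $\nu_p$ and continuous dependence of geodesics on their endpoints in the cone topology) all follow. Mixing of $\mu$ then follows from $\Gamma$-ergodicity of $\bar\mu$ via a Babillot-type argument as carried out in \cite{CKW21}.
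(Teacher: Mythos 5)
Your proposal follows essentially the same route as the paper: the construction of $\tilde\mu$ and the proof that it is an MME are deferred to \cite{CKW21} (which the paper itself also simply cites at this point), and your arguments for $\bar\mu(\partial^2\wE)=1$ in the two cases, the identification of the conditionals with arc length on a single orbit, full support, and mixing via the Babillot argument all match the paper's. The only minor variation is that under the entropy-gap hypothesis you deduce $\mu(\EE)=1$ directly by an affine decomposition of $\mu$ into its expansive and non-expansive parts, whereas the paper extracts only $\mu(\EE)>0$ from $h_\mu(\phi)=\htop(\phi)$ and then upgrades to full measure using the $\Gamma$-ergodicity of $\bar\mu$ together with the $\Gamma$-invariance of $\partial^2\wE$; both are correct, and the ergodicity of $\bar\mu$ is needed anyway for the ergodicity and mixing of $\mu$.
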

\begin{proof}
For the construction of the measure $\mu$  and the proof that $\mu$ is a measure of maximal entropy we refer to \cite{CKW21}.

Assume that  $h_\nu(\phi) < \htop( \phi)$ for all non-expansive measures 
$\nu\in \MMM_\phi(SM)$. 
Therefore, $h_\mu (\phi) = \htop (\phi)$ yields $\mu( \EE) >0$ and 
by the construction of $\mu$ this implies $\bar\mu (\partial^2\wE)$  is positive. If alternatively $\EE$ in $SM$ has non-trivial interior then by the definition of the topology in $\partial \wM$  the set $\partial^2\wE$ has  non-trivial interior in $\sqbd$ as well. This also implies  $\bar\mu (\partial^2\wE)$  is positive since by proposition \ref{prop:PS} $\bar\mu$ has full support. 
Hence, in both cases the ergodicity of   $\bar\mu $ and the  $\Gamma$-invariance of $\partial^2 \wE$ implies
 $\bar\mu (\wE) = \bar\mu(\sqbd)$.
  In particular, we have $\mu (\EE) =1$ for the probability measure $\mu$
  and  $\mu_{\xi,\eta}(A) =  \Leb \{ t \in \R : \phi^t(v) \in A \}$ for all measurable sets $A\subset S\wM$
and $\bar\mu$ almost all  $(\xi, \eta) \in \sqbd$.

   To prove ergodicity of $\mu$ we consider a measurable $\phi^t$ invariant  set $A \subset SM$ with $\mu(A) >0$. Since  
  $\mu (\EE) =1$ we can assume 
  that $A$ is contained in $\EE$.  Therefore, its lift $\widetilde A$  to the cover   $S\wM$ is contained in $ \wE$. Since  $\widetilde A$ is $\phi^t$ invariant and  $P^{-1}(\xi,\eta)$ for $(\xi, \eta) \in \partial^2 \wE$ consists of a single geodesic there is a $\Gamma$-invariant subset $B \subset  \partial^2\wE$ with $P^{-1} B = \widetilde A$. From $\mu(A) >0$ we conclude $\bar \mu (B) >0$ and the ergodicity  of $\bar \mu$ implies $\bar \mu (B) = \bar \mu (\sqbd)$. Hence
 $\mu(A) = 1$ and therefore the ergodicity of $\mu$ follows. 
 
 Since $\bar \mu$ has full support in $\sqbd$, $\mu$ has full support in $SM$ as well. Furthermore, using in argument of Babillot in \cite{mB02} we proved in \cite{CKW21} that the ergodicity of $\mu$ even implies that $\mu$ is mixing.

   \end{proof}
 \section{Entropy expansiveness}\label{sec:e-expansiveness}
Let $V$ be a compact metric space, $T: V \to V$ a homeomorphism and $F 
\subset V$ be a set in $V$. A set $E = E (n, \delta, F) \subset V$ is called an 
$(n, \delta)$-span of $F$ (with respect to $T$) if for each $y \in F$ there 
exists an $x \in E$ such that $d (T^k x,T^k y) \le \delta$ for all $0 \le k < 
n$. Let 
$$
r_n (F, \delta) =\min \{\card E \mid E {\rm \ is \ an \ } (n,\delta) {\rm -span\ 
of\ } F\}.
$$
Since $V$ is compact, $r_n(F,\delta) < \infty$. Define
$$
r(F,\delta) = \limsup_{n} \frac{1}{n} \log r_n(F,\delta)
$$
and 
$$
h(T,F) = \lim_{\delta \to 0} r(F,\delta).
$$
Note, that $h(T,F)$ is independent of the metric generating a given 
topology. $h(T) = h(T,V)$ is called the topological entropy of $T$. Now we 
recall the notion of entropy expansiveness introduced by R.~Bowen \cite{rBo72}.
\begin{definition}\label{def:Bowen-expansive}   
For $x \in V$ and $\epsilon>0$ consider the set
$$
Z_\epsilon(x) = \bigcap\limits_{n \in \Z} T^{-n} B(T^n x, \epsilon) = \{y\in X 
\mid d(T^nx, T^n y) \le \epsilon  \; \; \text{for all} \; \; n \in \Z \},
$$
where $B(y, \epsilon)$ denotes the $\epsilon$-ball about $y$ with respect to $d$.
A homeomorphism $T: V \to V$ is called entropy expansive at scale
$\epsilon > 0$ if $ h(T,Z_\epsilon(x)) = 0$ for all $x \in V$.
\end{definition}

\begin{remark}
Note that if $T$ is an
expansive homeomorphisms  at scale $\epsilon> 0$, i.e.  if $Z_\epsilon(x) = \{x\}$ for all $x\in V$
then it is  trivially entropy expansive at the same scale.
\end{remark}

We recall the following result of Bowen for measure preserving 
homeomorphisms on compact metric spaces $V$.

\begin{theorem}\label{thm:Bowen} (Bowen \cite{rBo72})    
Let $V$ be a finite dimensional compact metric space, $T: V \to V$ a homeomorphism and
${\AAA} = 
\{A_1, \ldots, A_m\}$ a Borel partition such that 
$$
\diam {\AAA} :=\max \{\diam A_i\} < \epsilon.
$$ 
Let $\nu$ be a $T$-invariant Borel probability measure. Then 
$$
h_\nu (T) \le h_\nu(T, {\AAA}) + \sup\limits_{x\in V} h(T,Z_\epsilon(x)).
$$
\end{theorem}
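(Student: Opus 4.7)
The plan is to follow Bowen's classical argument from \cite{rBo72}. Since $h_\nu(T) = \sup_\CC h_\nu(T, \CC)$ over finite Borel partitions $\CC$, it suffices to show, for every such $\CC$,
$$
h_\nu(T, \CC) \le h_\nu(T, \AAA) + h^*(\epsilon), \qquad h^*(\epsilon) := \sup_{x \in V} h(T, Z_\epsilon(x)).
$$
The first step is to refine $\CC$ by $\AAA$: monotonicity of entropy gives $h_\nu(T, \CC) \le h_\nu(T, \AAA \vee \CC)$, and the additivity $H_\nu((\AAA \vee \CC)_0^{n-1}) = H_\nu(\AAA_0^{n-1}) + H_\nu(\CC_0^{n-1} \mid \AAA_0^{n-1})$ yields, after division by $n$ and passage to the limit,
$$
h_\nu(T, \AAA \vee \CC) = h_\nu(T, \AAA) + \lim_{n\to\infty} \frac{1}{n} H_\nu(\CC_0^{n-1} \mid \AAA_0^{n-1}).
$$
So the task reduces to bounding the last limit by $h^*(\epsilon)$.

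For this, I would use $H_\nu(\CC_0^{n-1} \mid \AAA_0^{n-1}) \le \sum_{A \in \AAA_0^{n-1}} \nu(A) \log N(A)$, where $N(A) := \#\{C \in \CC_0^{n-1} : C \cap A \neq \emptyset\}$. Since $\diam \AAA < \epsilon$, every atom $A$ is contained in the forward dynamical ball $B_n(x_A, \epsilon) = \{y : d(T^i y, T^i x_A) < \epsilon,\ 0 \le i < n\}$ for any chosen $x_A \in A$, so $N(A)$ is bounded by the minimal number of $(n,\delta)$-balls needed to cover $B_n(x_A,\epsilon)$ once $\delta > 0$ is chosen smaller than a Lebesgue number associated to $\CC$ (after approximating $\CC$ by an open cover of comparable diameter if necessary). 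Hence
$$
\frac{1}{n} H_\nu(\CC_0^{n-1} \mid \AAA_0^{n-1}) \le \frac{1}{n} \sup_{x \in V} \log r_n(B_n(x, \epsilon), \delta).
$$

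The main obstacle is to bound the right-hand side uniformly in $x$, so as to pass from the one-sided forward dynamical balls to the two-sided sets $Z_\epsilon(x)$ that enter $h^*(\epsilon)$. The key claim is: for every $\eta > 0$ there exist $\delta_0 > 0$ and $n_0 \in \N$ such that $r_n(B_n(x, \epsilon), \delta_0) \le e^{n(h^*(\epsilon) + \eta)}$ for all $x \in V$ and $n \ge n_0$. This uniform estimate is the heart of Bowen's argument, established by contradiction and compactness: a failure produces sequences $x_k \in V$, $n_k \to \infty$ and $(n_k, \delta_0)$-separated subsets $E_k \subset B_{n_k}(x_k, \epsilon)$ of excessive size; re-centering at time $\lfloor n_k/2 \rfloor$ (replacing $x_k$ by $T^{\lfloor n_k/2\rfloor} x_k$ and $E_k$ by its image under the same power) and extracting a convergent subsequence $x_k \to x_\infty$, any Hausdorff accumulation of the centered sets lies in $Z_\epsilon(x_\infty)$ and yields, for every fixed $m$, an $(m, \delta_0)$-separated subset of $Z_\epsilon(x_\infty)$ of size exceeding $e^{m(h^*(\epsilon) + \eta/2)}$, contradicting $h(T, Z_\epsilon(x_\infty)) \le h^*(\epsilon)$. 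Granting this uniform bound and using $\sum_A \nu(A) = 1$, we obtain
$$
\frac{1}{n} H_\nu(\CC_0^{n-1} \mid \AAA_0^{n-1}) \le h^*(\epsilon) + \eta
$$
for all $n \ge n_0$; letting $n \to \infty$ and then $\eta \to 0$ completes the proof.
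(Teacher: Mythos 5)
The paper does not prove this statement; it is quoted from Bowen \cite{rBo72}, so your proposal has to be judged against Bowen's argument, whose skeleton you have correctly reproduced: reduce to $h_\nu(T,\CC)\le h_\nu(T,\AAA)+\lim_n \frac1n H_\nu(\CC_0^{n-1}\mid\AAA_0^{n-1})$, bound the conditional entropy by $\sum_A\nu(A)\log N(A)$, use $\diam\AAA<\epsilon$ to put each atom of $\AAA_0^{n-1}$ inside a dynamical ball, and then invoke a uniform covering estimate for those balls. The problem is that the step you yourself identify as ``the heart of Bowen's argument'' is exactly where your proposal breaks down. The sets $E_k\subset B_{n_k}(x_k,\epsilon)$ are $(n_k,\delta_0)$-separated, i.e.\ separated at \emph{some} time in a window of length $n_k$; for a \emph{fixed} $m$, the number of points of $E_k$ that are pairwise $(m,\delta_0)$-separated is bounded by the $(m,\delta_0)$-packing number of $V$, a constant independent of $k$, so nothing forces the re-centered sets to contain $(m,\delta_0)$-separated subsets of cardinality $e^{m(h^*(\epsilon)+\eta/2)}$. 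Worse, a Hausdorff accumulation point of the $F_k$ is just a compact subset of $Z_\epsilon(x_\infty)$ and retains no cardinality or separation information at all; you cannot read off a lower bound for $h(T,Z_\epsilon(x_\infty))$ from it. So the contradiction never materializes.

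The actual mechanism in Bowen's proof is not a compactness contradiction on separated sets but a sub-multiplicative chaining. One introduces the truncated two-sided sets $D_N(y,\epsilon)=\bigcap_{|j|\le N}T^{-j}\bar B(T^jy,\epsilon)$, which decrease to $Z_\epsilon(y)$ as $N\to\infty$; since $h(T,Z_\epsilon(y))\le h^*(\epsilon)$, compactness and the upper semicontinuity of $y\mapsto Z_\epsilon(y)$ yield, for each $\eta>0$, uniform $m$, $N$ and $\delta$ with $r_m(D_N(y,\epsilon),\delta)\le e^{m(h^*(\epsilon)+\eta)}$ for \emph{all} $y\in V$. One then covers $B_n(x,\epsilon)$ by concatenating these $(m,\delta)$-spanning sets along $\sim n/m$ time blocks (only times in $[N,n-N]$ see a full two-sided $\epsilon$-window, which is precisely how the one-sided ball is related to the two-sided sets $Z_\epsilon$ — the passage you flag as the main obstacle but never carry out). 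This is what produces $r_n(B_n(x,\epsilon),\delta)\le C\,e^{n(h^*(\epsilon)+\eta)}$ uniformly in $x$. A secondary issue: your bound on $N(A)$ by covering numbers ``once $\delta$ is smaller than a Lebesgue number associated to $\CC$'' does not make sense for a partition — a $(n,\delta)$-ball can meet several atoms of $\CC_0^{n-1}$ no matter how small $\delta$ is, producing an extra factor $K(\delta)^n$ with $K(\delta)\ge 2$; this is standard to repair (e.g.\ via partitions with $\nu$-small boundary neighbourhoods, as in Misiurewicz-type arguments), but it is not repaired by a Lebesgue-number choice.
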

\begin{remark}
In particular \ref{thm:Bowen} yields: if $T$ is entropy expansive at scale $\epsilon$, 
 and ${\AAA}$ a partition  of $V$ with $\diam {\AAA} < \epsilon$, then
$$
h_\nu(T) = h_\nu(T, {\AAA})
$$
holds.
\end{remark}
A very useful generalization of entropy expansiveness has been introduced by Climenhaga and Thompson
\cite{CT16}
\begin{definition}\label{def:CT-expansive} 
Let $\nu$ be a $T$-invariant Borel probability measure of a homeomorphism $T: V \to V$. Then $T$ is called $\nu$  almost entropy expansive at scale 
$\epsilon > 0$ if $ h(T,Z_\epsilon(x))= 0$ for $\nu$ almost $x \in V$.
\end{definition}

Using this notion they obtain in \cite{CT16}, Theorem 3.2.
\begin{theorem}\label{thm:CT} 
Let $V$ be a compact metric space, $T: V \to V$ a homeomorphism and
${\AAA}$ a partition with $\diam {\AAA} <\epsilon$. If  $\nu$ is an ergodic $T$-invariant Borel probability measure
such that $\nu$  is almost entropy expansive at scale 
$\epsilon > 0$. Then 
$$
h_\nu(T) = h_\nu(T, {\AAA}). 
$$
\end{theorem}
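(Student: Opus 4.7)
The strategy is to adapt Bowen's combinatorial proof of Theorem \ref{thm:Bowen}, exchanging the uniform supremum $\sup_x h(T, Z_\epsilon(x))$ for the essential supremum against $\nu$. The direction $h_\nu(T, \mathcal{A}) \le h_\nu(T)$ is automatic from the variational definition of Kolmogorov--Sinai entropy, so it suffices to establish the reverse inequality. Since $h_\nu(T) = \sup_\mathcal{B} h_\nu(T, \mathcal{B})$ ranges over finite Borel partitions, the task reduces to showing $h_\nu(T, \mathcal{B}) \le h_\nu(T, \mathcal{A})$ for every such $\mathcal{B}$.

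Abbreviate $\mathcal{A}^{(n)} := \bigvee_{k=0}^{n-1} T^{-k}\mathcal{A}$ and similarly for $\mathcal{B}$. The standard identity $H_\nu(\mathcal{A}^{(n)} \vee \mathcal{B}^{(n)}) = H_\nu(\mathcal{A}^{(n)}) + H_\nu(\mathcal{B}^{(n)} \mid \mathcal{A}^{(n)})$ together with the trivial refinement bound $h_\nu(T, \mathcal{B}) \le h_\nu(T, \mathcal{A} \vee \mathcal{B})$ yields
\[
h_\nu(T, \mathcal{B}) \le h_\nu(T, \mathcal{A}) + \limsup_{n \to \infty} \frac{1}{n} H_\nu\bigl(\mathcal{B}^{(n)} \mid \mathcal{A}^{(n)}\bigr).
\]
Let $N_n(x)$ denote the number of atoms of $\mathcal{B}^{(n)}$ meeting $\mathcal{A}^{(n)}(x)$; then $H_\nu(\mathcal{B}^{(n)} \mid \mathcal{A}^{(n)}) \le \int \log N_n(x) \, d\nu(x)$, because within each $\mathcal{A}^{(n)}$-atom the conditional entropy of $\mathcal{B}^{(n)}$ is bounded by the log of the number of $\mathcal{B}^{(n)}$-atoms meeting it. Because $\diam \mathcal{A} < \epsilon$, every atom $\mathcal{A}^{(n)}(x)$ lies in the Bowen ball $B_n(x, \epsilon) := \bigcap_{k=0}^{n-1} T^{-k} B(T^k x, \epsilon)$, which contracts as $n \to \infty$; the classical counting argument underlying the proof of Theorem \ref{thm:Bowen} then yields the pointwise estimate
\[
\limsup_{n \to \infty} \frac{1}{n} \log N_n(x) \le h(T, Z_\epsilon(x)),
\]
and the right side vanishes on a set of full $\nu$-measure by almost entropy expansiveness.

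It remains to upgrade this pointwise bound to $\limsup_n \tfrac{1}{n} \int \log N_n \, d\nu = 0$; this is the main technical obstacle, since interchanging $\limsup$ with the integral is not automatic when $\tfrac{1}{n}\log N_n$ is only controlled pointwise. I would handle it by an Egoroff-type argument: for any $\gamma > 0$, select a measurable $V_\gamma \subset V$ with $\nu(V \setminus V_\gamma) < \gamma$ on which $\tfrac{1}{n} \log N_n$ converges uniformly to $0$, and exploit the deterministic bound $N_n(x) \le |\mathcal{B}|^n$ (so $\tfrac{1}{n}\log N_n \le \log|\mathcal{B}|$ pointwise) to estimate the contribution from $V \setminus V_\gamma$ by $\gamma \log |\mathcal{B}|$. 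Letting $\gamma \to 0$ gives $\limsup_n \tfrac{1}{n} H_\nu(\mathcal{B}^{(n)} \mid \mathcal{A}^{(n)}) = 0$, and combining this with the preceding inequalities yields $h_\nu(T, \mathcal{B}) \le h_\nu(T, \mathcal{A})$, completing the proof.
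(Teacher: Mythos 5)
The paper does not prove this statement at all --- it is quoted from \cite{CT16} (Theorem 3.2) --- so your proposal has to stand on its own. Its overall architecture is the right one: Bowen's decomposition $h_\nu(T,\mathcal{B})\le h_\nu(T,\mathcal{A})+\lim_n\frac1n H_\nu(\mathcal{B}^{(n)}\mid\mathcal{A}^{(n)})$, a counting bound for the conditional entropy, and an interchange of limit and integral via the uniform bound $\frac1n\log N_n\le\log|\mathcal{B}|$ (reverse Fatou would do in place of Egoroff). The last step is fine. The gap is in the middle.

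The asserted pointwise estimate $\limsup_n\frac1n\log N_n(x)\le h(T,Z_\epsilon(x))$ does not follow from ``the classical counting argument'' for an arbitrary finite Borel partition $\mathcal{B}$, and in fact it is false at that level of generality. Take $T$ an irrational rotation of the circle: every $Z_\epsilon(x)$ is an arc, so $h(T,Z_\epsilon(x))=0$ everywhere. One can build a two-element Borel partition $\mathcal{B}=\{C_1,C_2\}$ by distributing countably many disjoint (dense) orbits so that every dynamical cylinder $\bigcap_{k<n}T^{-k}C_{i_k}$ is nonempty and meets any prescribed open set; then $N_n(x)=2^n$ while the right-hand side is $0$. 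The measure-theoretic conclusion survives because all the offending cylinders are null, but your bound $H_\nu(\mathcal{B}^{(n)}\mid\mathcal{A}^{(n)})\le\int\log N_n\,d\nu$ has already discarded the measure, so the argument breaks. The standard repair --- and the actual content of Bowen's Lemmas 2.2--2.4, which your sketch compresses into one sentence --- is to (i) restrict to a sequence of partitions $\mathcal{B}_k$ with $\diam\mathcal{B}_k\to0$ whose atoms are adapted to open covers (this suffices since $h_\nu(T)=\lim_k h_\nu(T,\mathcal{B}_k)$), so that the number of relevant atoms of $\mathcal{B}_k^{(n)}$ meeting $\mathcal{A}^{(n)}(x)$ is controlled by an $(n,\delta_k)$-spanning number of the Bowen ball $B_n(x,\epsilon)$, and (ii) invoke the nested-compact-sets lemma $B_n(x,\epsilon)\downarrow Z_\epsilon(x)$ to convert $\limsup_n\frac1n\log r_n(B_n(x,\epsilon),\delta_k)$ into a quantity that tends to $h(T,Z_\epsilon(x))$ only as $\delta_k\to0$, a limit that must be taken after $n\to\infty$ and then interchanged with the integral. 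None of this is fatal --- it is how Bowen's Theorem \ref{thm:Bowen} and its almost-everywhere refinement in \cite{CT16} are actually proved --- but as written your key inequality is unjustified and, for general $\mathcal{B}$, wrong. (For the stronger hypothesis that $Z_\epsilon(x)=\{x\}$ for $\nu$-a.e.\ $x$ there is a cleaner route: $\bigvee_{n\in\Z}T^{-n}\mathcal{A}$ then separates points on a full-measure set, so $\mathcal{A}$ is a generator mod $\nu$ and Kolmogorov--Sinai gives the result directly; but that does not cover the hypothesis $h(T,Z_\epsilon(x))=0$ as stated.)
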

For a given Riemannian manifold $(M,g)$ we define 
for $k \in \N$ the metric on $SM$ given by
\begin{equation}\label{eqn:dyn-metric}
d_k(v,w) = \max \{d(c_v(t), c_w(t) \mid t \in [0,k] \},
\end{equation}
where $d$ is the metric on $M$ induced by the Riemannian metric $g$.
Using Theorem \ref{thm:CT} we obtain:
\begin{proposition}\label{prop:entexpansive}  
Let $(M,g)$ be a compact Riemannian manifold  without conjugate points and $\phi^t$ the 
geodesic flow on the unit tangent bundle $SM$. Assume that $\nu$ is a $\phi^t$- invariant probability measure such that  $\nu(\EE) =1$,  where $\EE$ is the expansive set introduced in section \ref{sec: PS measure}.
 If $\epsilon > 0$ is smaller then the injectivity radius $\inj(M)$ of $M$, then $\phi^k$, $k \in \N$,  is $\nu$ almost everywhere entropy expansive with respect to the metric $d_k$.
  If additionally $\nu$ is ergodic  and   ${\AAA}$ is a Borel partitions with $\diam_{d_k} {\AAA} <
\inj (M)$,

$$
h_\nu(\phi^k) = h_\nu(\phi^k, {\AAA}).
$$

\end{proposition}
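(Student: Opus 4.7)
The plan is to compute the Bowen dynamical ball $Z^{d_k}_\epsilon(v)$ for the homeomorphism $\phi^k$ with respect to the metric $d_k$, identify it for $v \in \EE$ as a compact arc of the flow orbit of $v$ using the expansive-set property, and then use the standard fact that such flow arcs have vanishing topological entropy under $\phi^k$. The equality $h_\nu(\phi^k) = h_\nu(\phi^k, \AAA)$ then follows from Theorem~\ref{thm:CT}.

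First I would unwind the definition. Writing any $s \in \R$ as $s = jk + r$ with $j \in \Z$ and $r \in [0, k]$ gives
\[
Z^{d_k}_\epsilon(v) = \bigl\{w \in SM : d_k((\phi^k)^j v, (\phi^k)^j w) \le \epsilon \text{ for all } j \in \Z\bigr\} = \bigl\{w \in SM : d(c_v(s), c_w(s)) \le \epsilon \text{ for all } s \in \R\bigr\}.
\]
Now take $v \in \EE$ with a chosen lift $\tilde v \in \wE \subset S\wM$. For $w \in Z^{d_k}_\epsilon(v)$, the bound $\epsilon < \inj(M)$ selects a unique lift $\tilde w$ of $w$ with $d(c_{\tilde v}(s), c_{\tilde w}(s)) \le \epsilon$ for every $s \in \R$, by sub-injectivity-radius tracking. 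The remark following Definition~\ref{dfn:expansiveset} then gives $\tilde w = \phi^{t_0}(\tilde v)$ for some $t_0 \in \R$; since geodesics on the simply-connected no-conjugate-point manifold $\wM$ are globally minimizing, $|t_0| = d(c_{\tilde v}(0), c_{\tilde v}(t_0)) = d(c_{\tilde v}(0), c_{\tilde w}(0)) \le \epsilon$. Projecting back, $w = \phi^{t_0}(v)$ with $|t_0| \le \epsilon$, so $Z^{d_k}_\epsilon(v)$ is contained in the compact arc $\{\phi^s v : s \in [-\epsilon, \epsilon]\}$ of the flow orbit of $v$.

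To see that this arc has vanishing $\phi^k$-entropy with respect to $d_k$, observe that since a geodesic moves every point by at most the elapsed parameter time,
\[
d_k(\phi^{s_1 + jk} v, \phi^{s_2 + jk} v) = \max_{0 \le r \le k} d(c_v(s_1 + jk + r), c_v(s_2 + jk + r)) \le |s_1 - s_2|
\]
uniformly in $j \in \Z$. Hence the $n$-iterate Bowen metric of $\phi^k$ with respect to $d_k$ on this arc is dominated by the flow-parameter metric uniformly in $n$, so any $\delta$-net in $[-\epsilon, \epsilon]$ is an $(n, \delta)$-span of cardinality at most $\lceil 2\epsilon/\delta \rceil + 1$ for every $n$. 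This yields $h(\phi^k, Z^{d_k}_\epsilon(v)) = 0$ for $\nu$-a.e.\ $v$ (using $\nu(\EE) = 1$), establishing $\nu$-almost entropy expansiveness at scale $\inj(M)$. The final assertion then follows from Theorem~\ref{thm:CT} applied to $\phi^k$, the ergodic measure $\nu$, and the partition $\AAA$ with $\diam_{d_k}\AAA < \inj(M)$. The only delicate step is the global lifting-and-tracking argument, where the hypothesis $\epsilon < \inj(M)$ is essential; once this is secured, the expansive-set identification and the flow-arc entropy bound combine routinely.
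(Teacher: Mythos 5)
Your proposal is correct and follows essentially the same route as the paper: identify $Z^{d_k}_\epsilon(v)$ with the set of $w$ whose geodesics $\epsilon$-shadow $c_v$ for all time, lift to $\wM$ using $\epsilon<\inj(M)$, invoke the expansive-set property to conclude $w$ lies on the flow orbit of $v$, and then apply Theorem~\ref{thm:CT}. You are somewhat more careful than the paper in two places the paper leaves implicit --- bounding the time shift by $|t_0|\le\epsilon$ and explicitly checking via a span argument that the resulting flow arc has zero entropy under $\phi^k$ --- but these are refinements of the same argument, not a different one.
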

\begin{proof}
For $v \in \EE$ be a vector in the expansive set. Then
\begin{align*}
Z_{\epsilon} (v) &= \{w\in SM \mid d_k (\phi^{nk}w, \phi^{nk}v) \le \epsilon, n \in 
\Z \} \\
&= \{w \in SM \mid d(c_v(t), c_w(t)) \le \epsilon, t \in \R \}.
\end{align*}

Consider $w  \in Z_{\epsilon} (v)$ and the function $ L(t) = 
d(c_w(t), c_{v}(t)) \le \epsilon \le \inj(M)$. Therefore there exists lifts
$\wti  c_{v}, \wti  c_{w}: \R \to \wM$ such that
$$
d(\wti  c_v(t), \wti  c_w(t)) \le \epsilon
$$
fot all $ t \in \R$. Since $v \in \EE$ we have that the images of geodesics $\wti  c_v$ and  $\wti  c_v$ agree.
In particular, $ Z_{\epsilon} (v) =  \{ \phi_t (v) \mid t \in \R \}$.

If additionally $\nu$ is ergodic  and   ${\AAA}$ is a Borel partitions with $\diam_{d_k} {\AAA} <
\inj (M)$, then Theorem \ref{thm:CT} implies 
$
h_\nu(\phi^k) = h_\nu(\phi^k, {\AAA}).
$

\end{proof}

\section{The uniqueness of the measure of maximal entropy}  

In this section we want to proof the uniqueness of the MME under the conditions stated in Theorem  \ref{thm:main} and
 Theorem  \ref{thm:expansiveMME}. 
 We assume that either for all non-expansive measures 
$\nu\in \MMM_\phi(SM)$ (i.e. $\nu(\EE) =0 $)  we have $h_\nu(\phi) <\htop(\phi)$
 or the expansive  set $\EE$ has non-trivial interior in $SM$.
According to Theorem \ref{thm:measure mu} this implies
that the measure constructed in subsection \ref{sec: PS measure}
is ergodic of maximal entropy and $\mu(\EE) = 1$.
Moreover, $\mu$  is the normalized projection of the measure $\tmu$ on $S \wM$ with  
$$
\tmu(A) =  \int_{\sqbd} \Leb \{ t \in \R : \phi_t(v) \in A \}d \bar\mu (v_- , v_+)
$$
where $v_\pm = c_v(\pm \infty)$ and
$\bar\mu$ is  defined in Proposition 
\ref{prop: measure barmu} 
using the Patterson-Sullivan measure $\nu_p$ on $\partial \wM$ by
\[
d \bar\mu (\xi, \eta) = e^{h\beta_p(\xi,\eta)}
 d\nu_p(\xi) d\nu_p(\eta)
\]

 In  this section we prove first that $\mu$  is the unique MME under the following conditions.
\begin{theorem}\label{thm:uniqueMME-inj}
Let $(M,g)$ be a closed Riemannian manifold without conjugate points with divergence property and with Gromov hyperbolic fundamental group. Let $\delta $ the Gromov hyperbolicity constant of the universal cover $\wM$ and assume that the  injectivity radius  $\inj(M)$ of $M$ is larger than $16 \delta$. Assume that at least one of the following two
conditions is fulfilled:
\begin{enumerate}
\item 
The entropy of non-expansive measures is strictly smaller than the topological entropy, i.e.
$$ h_\nu(\phi)< \htop(\phi) \; \; \text{ for all} \; \;  \nu \in \MMM_\phi(SM) \; \; with \; \;\nu(\EE)=0$$ 
\item
The expansive set $\EE \subset SM$ has non-empty interior and the geodesic flow $\phi^t: SM \to SM$ is entropy expansive
at some scale larger than  $8 \delta$. 
\end{enumerate}
Then the measure $\mu$ is the unique measure of maximal entropy of the geodesic flow.
\end{theorem}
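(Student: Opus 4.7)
The plan is to show that any ergodic measure of maximal entropy $\mu'$ must coincide with the measure $\mu$ constructed in Section~\ref{sec:PS}. Following the strategy of \cite{gK98}, the idea is to disintegrate the lift $\tilde\mu'$ of $\mu'$ to $S\wM$ over the endpoint projection $P\colon S\wM\to\sqbd$, produce a $\Gamma$-invariant Borel measure $\bar\mu'$ on $\sqbd$ that plays the same role as the boundary measure $\bar\mu$ from Proposition~\ref{prop: measure barmu}, and then use the ergodicity of $\bar\mu$ under the $\Gamma$-action to force $\bar\mu' = c\bar\mu$, whence $\mu' = \mu$ after normalization.

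The first step is to establish $\mu'(\EE)=1$. Under hypothesis (1) this is immediate: since $\EE$ is $\phi^t$-invariant and $\mu'$ is ergodic, either $\mu'(\EE)=0$, which is ruled out by the entropy-gap assumption applied to $\mu'$ (which has $h_{\mu'}(\phi)=\htop(\phi)$), or $\mu'(\EE)=1$. Under hypothesis (2) the argument is more delicate. Entropy expansiveness at scale $\rho>8\delta$ together with Lemma~\ref{lem:endpts-suffice} implies that the ``same endpoints at infinity'' equivalence relation carries no entropy on each class, so the variational principle applied to the induced quotient dynamics on $\sqbd$ gives $h_{\mu'}(\phi)=h_{\bar\mu'}(\bar\phi)$, making $\bar\mu'$ maximizing on $\sqbd$. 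The interior hypothesis on $\EE$ implies that $\partial^2\wE$ is open with full $\bar\mu$-support, and one argues via $\Gamma$-ergodicity that $\bar\mu'(\partial^2\wE) = \bar\mu'(\sqbd)$, which then transfers to $\mu'(\EE) = 1$.

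With $\mu'(\EE) = 1$ in hand, the second step is to identify $\bar\mu'$ with $\bar\mu$. Since $P^{-1}(\xi,\eta)$ is a single orbit for $(\xi,\eta)\in\partial^2\wE$, the conditional measures of $\tilde\mu'$ along these fibers are automatically Lebesgue by flow-invariance. Fix a Borel partition $\AAA$ of $SM$ whose dynamical diameter at time $k$ is smaller than $\inj(M)$; by Proposition~\ref{prop:entexpansive}, $h_{\mu'}(\phi^k,\AAA)=h_{\mu'}(\phi^k)=k\htop(\phi)$, and likewise for $\mu$. Lifting atoms of the refined partition $\bigvee_{j=0}^{n-1}\phi^{-jk}\AAA$ to $S\wM$ and pushing forward under $P$, the resulting sets are essentially product shadows of the form $pr_{\xi^-}B(q,\rho)\times pr_{\xi^+}B(q',\rho)$ along a geodesic segment of length $\asymp nk$, whose $\bar\mu$-mass is $\asymp e^{-hnk}$ by Proposition~\ref{prop:PS-bds} and the definition of $\beta_p$. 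Shannon--McMillan--Breiman applied to $\mu'$ then matches the $\mu'$-mass of these atoms to $e^{-hnk}$ on a set of full measure, yielding $\bar\mu' \ll \bar\mu$; combined with the $\Gamma$-invariance of $\bar\mu'$ and the $\Gamma$-ergodicity of $\bar\mu$ from Proposition~\ref{prop: measure barmu}, this forces $\bar\mu' = c\bar\mu$, and normalization gives $\mu' = \mu$.

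The main obstacle I anticipate is the geometric comparison between dynamical $d_n$-balls on $SM$ and product shadows on $\sqbd$. In the negatively curved setting of \cite{gK98} this is transparent via convexity of the distance and the Morse lemma, but here neither is available; the replacement is the coarse Lemma~\ref{lem:endpts-suffice} together with the visibility property from Theorem~\ref{thm:visibility}. The thresholds $\inj(M)>16\delta$ and $\rho>8\delta$ in the statement are calibrated precisely to accommodate the $4\delta+3\rho$ blow-up in Lemma~\ref{lem:endpts-suffice} while keeping the dynamical balls inside the injectivity radius, so that the shadow-to-Bowen-ball correspondence is controlled with multiplicative constants independent of $n$.
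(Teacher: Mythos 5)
Your outline heads in a genuinely different direction from the paper: you try to \emph{identify} an arbitrary ergodic MME $\mu'$ with $\mu$ by producing a boundary measure $\bar\mu'$, proving $\bar\mu'\ll\bar\mu$, and invoking $\Gamma$-ergodicity of $\bar\mu$. The paper instead argues by contradiction: it uses the Lebesgue decomposition and ergodicity of $\mu$ to reduce to an ergodic measure $\nu$ \emph{singular} to $\mu$ with $h_\nu(\phi)=h$, and then derives a contradiction by a counting argument --- the partition $\BBB^n$ subordinate to $(d_{2n},2r_0)$-separated sets, the coverings $L^{2n}$ with $\mu(L_i^{2n})\ge\beta e^{-2hn}$ (Lemma~\ref{lem:coveringL}), the bounded-multiplicity comparison of Lemma~\ref{lem: est-partition}, and the separation Lemma~\ref{lem:sep-measures} which exploits $SM\setminus\EE\subset\Omega$. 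Your route is not implausible, but as written it has two genuine gaps.

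First, under hypothesis (2) your step ``$\mu'(\EE)=1$'' is not justified and is in fact circular: you appeal to $\Gamma$-ergodicity to conclude $\bar\mu'(\partial^2\wE)=\bar\mu'(\sqbd)$, but the only measure known to be $\Gamma$-ergodic (Proposition~\ref{prop: measure barmu}) is $\bar\mu$, and that says nothing about $\bar\mu'$ until you already know $\bar\mu'\ll\bar\mu$ --- which is the conclusion you are after. Likewise the ``variational principle for the induced quotient dynamics on $\sqbd$'' is not a defined object here: the $\Gamma$-action on $\sqbd$ is not a flow and carries no entropy comparable to $h_{\mu'}(\phi)$. The paper's proof deliberately avoids needing $\nu(\EE)=1$ under hypothesis (2); entropy expansiveness at scale $>8\delta$ (pulled up to a cover with $\inj>16\delta$, whence $4r_0<\inj(M)$) is used only to get $h_\nu(\phi^{2n})=h_\nu(\phi^{2n},\BBB^n)$ via Bowen's Theorem~\ref{thm:Bowen}, while the expansiveness of $\mu$-a.e.\ vector (from Theorem~\ref{thm:measure mu}) is what feeds Lemma~\ref{lem:sep-measures}. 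Second, the step ``Shannon--McMillan--Breiman matches the $\mu'$-mass of atoms to $e^{-hnk}$, yielding $\bar\mu'\ll\bar\mu$'' is precisely where all the work lies, and it is not carried out: you must show that $\mu'$-typical refined atoms correspond, with uniformly bounded multiplicity, to sets of $\bar\mu$-measure $\gtrsim e^{-hnk}$, and then run a covering argument to transfer $\bar\mu$-null sets to $\bar\mu'$-null sets. That is exactly the content of Lemmas~\ref{lem: lower bound mu(D)}, \ref{lem: r_0 separated}, \ref{lem:coveringL} and \ref{lem: est-partition}; without them your absolute continuity claim is an assertion, not a proof. You do correctly identify the geometric inputs (Lemma~\ref{lem:endpts-suffice}, the shadow bounds of Proposition~\ref{prop:PS-bds}, and the calibration of $r_0=4\delta+3\rho$ against $\inj(M)>16\delta$), so the skeleton is salvageable, but the decisive quantitative steps are missing.
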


We first establish properties of $\mu$ which will be of importance in the proof of Theorem \ref{thm:uniqueMME-inj}.
 
\subsection{Properties of the measure $\mu$ and partitions}
Let $\delta $ be the Gromov hyperbolicity constant of the universal cover $\wM$ and  $R=1 + 2\delta$.
Let ${\FFF}$ be a fundamental domain 
containing
a reference point $p \in \wM$ and $R' \ge q(R)+ 1/2$, where $q(R) = 19 \delta + 4 R$
such that ${\FFF} \subset B(p, R')$. If 
$A \subset \wM$  let $SA$ be the set of all unit 
vectors with footpoint in $A$. For $x \in \wM$ and $R > 0$ 
let
$$
D(x,R', R) = \{v \in SB(p, R') \mid c_v(r) \in B(x, R)  
\mbox{\rm \ for\ some\ } r>0\}.
$$
Then the following lemma holds.
\begin{lemma}\label{lem: lower bound mu(D)}     
Consider $x \in X$ such that $d(p,x) >  R$. If $R' \ge  q(R) + \frac {1}{2}$ where $q(R) = 19 \delta + 4 R$  then 
$$
\tmu(D(x,R', R)) \ge c' e^{-h d(p,x)} 
$$
 for a constant $c' > 0$ which does not depend on $x$.
\end{lemma}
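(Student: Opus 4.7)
The plan is to exhibit a Borel subset $S\subset\sqbd$ of endpoint pairs with $\bar\mu(S)\ge c' e^{-hd(p,x)}$ such that, for each $(\xi,\eta)\in S$, the geodesic $c_{\xi\eta}$ comes within $q(R)=19\delta+4R$ of $p$ at some time and strictly later passes through $B(x,R)$. The hypothesis $R'\ge q(R)+\tfrac12$ then forces $c_{\xi\eta}$ to lie in $B(p,R')$ on a time interval of length at least $1$ around the closest approach to $p$, and every vector over that interval belongs to $D(x,R',R)$. Together with the identity
\[
\tmu(D(x,R',R))=\int_{\sqbd}\Leb\{t:\phi_t(v(\xi,\eta))\in D(x,R',R)\}\,d\bar\mu(\xi,\eta),
\]
this will yield the claimed bound.

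I would set $\Xi:=pr_x B(p,R)$, $H_\xi:=pr_\xi B(x,R)$, and $S:=\{(\xi,\eta):\xi\in\Xi,\ \eta\in H_\xi\}$. For $(\xi,\eta)\in S$, pick $y\in B(p,R)$ with $\xi=c_{y,x}(\infty)$ and $z\in B(x,R)$ through which $c_{\xi\eta}$ passes, parametrizing so $c_{\xi\eta}(0)=z$. The ray $r'$ from $x$ to $\xi$ (a reparametrization of $c_{y,x}$) passes through $y$ at parameter $d(y,x)$, while the reversed backward half $s\mapsto c_{\xi\eta}(-s)$ is also a ray to $\xi$, starting at $z$ with $d(z,x)\le R$. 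Lemma \ref{lem:GB}(3b) then gives $d(c_{\xi\eta}(-d(y,x)),y)\le 19\delta+3R$, and the triangle inequality places $c_{\xi\eta}(-d(y,x))$ inside $B(p,q(R))$. Since $d(y,x)\ge d(p,x)-R>0$, the time $-d(y,x)$ is strictly negative and hence precedes the visit $c_{\xi\eta}(0)=z\in B(x,R)$, so the ordering requirement is satisfied. (The effective Lebesgue length shrinks only as $d(p,x)\to R^+$, a bounded range easily absorbed into $c'$.)

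To lower-bound $\bar\mu(S)$, I use $d\bar\mu=e^{h\beta_p(\xi,\eta)}\,d\nu_p(\xi)\,d\nu_p(\eta)$. Evaluating $\beta_p$ at $q=c_{\xi\eta}(-d(y,x))\in B(p,q(R))$ and using that Busemann functions are $1$-Lipschitz gives $|\beta_p(\xi,\eta)|\le 2q(R)$ and hence $e^{h\beta_p}\ge e^{-2hq(R)}$. Proposition \ref{prop:PS-bds}(a) provides $\nu_p(\Xi)\ge\ell$. For the inner integral I convert $\nu_p(H_\xi)$ to $\nu_x(H_\xi)$ via the Radon-Nikodym formula $d\nu_p/d\nu_x=e^{hb_p(x,\cdot)}$; a direct estimate of $b_p(x,\eta)=\lim_{t\to\infty}(d(x,c_{\xi\eta}(t))-d(p,c_{\xi\eta}(t)))$ using the two distance bounds already established along $c_{\xi\eta}$ produces $b_p(x,\eta)\ge -d(p,x)-C$ with $C=C(\delta,R)$. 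Proposition \ref{prop:PS-bds}(a), applied at the basepoint $x$ (a basepoint-uniform estimate by cocompactness and the $\Gamma$-equivariance of the Patterson-Sullivan family), gives $\nu_x(H_\xi)\ge\ell$. Multiplying the three factors yields $\bar\mu(S)\ge c' e^{-hd(p,x)}$, completing the argument.

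The main obstacle is the geometric step showing $d(c_{\xi\eta}(-d(y,x)),p)\le q(R)$: the constant $q(R)=19\delta+4R$ in the hypothesis is exactly $(19\delta+3R)+R$, matching Lemma \ref{lem:GB}(3b) plus the triangle-inequality contribution $d(y,p)\le R$, so the lemma's statement is finely calibrated to this single comparison. The remaining arguments are routine Busemann-function accounting for Patterson-Sullivan shadows.
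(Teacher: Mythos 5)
Your proposal is correct and follows essentially the same route as the paper: the same set of endpoint pairs $\bigcup_{\xi\in pr_xB(p,R)}\{\xi\}\times pr_\xi B(x,R)$, the same appeal to Lemma \ref{lem:GB}(3b) to force the connecting geodesic into $B(p,q(R))$ with $q(R)=19\delta+4R$, and the same shadow estimates from Proposition \ref{prop:PS-bds} integrated against $\bar\mu$. The only cosmetic differences are that you shift the basepoint to $x$ and use part (a) of Proposition \ref{prop:PS-bds} where the paper shifts to a point of $B(p,R)$ and uses part (b), and that you make explicit the uniform bound $e^{h\beta_p(\xi,\eta)}\ge e^{-2hq(R)}$ that the paper leaves implicit.
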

\begin{proof}
Consider the set $pr_x(B(p,R))$.
According to Proposition  \ref{prop:PS-bds} there is a constant $\ell > 0$ such that 
$\mu_p(pr_x(B(p,R))) \ge \ell $. If $\eta \in pr_x(B(p,R))$ then  $c_{\eta, x} (-r) \in B(p,R)$
for some $r >0$ where $c_{\eta, x}$ is the geodesic with $c_{\eta, x}( -\infty) = \eta$ and  $c_{\eta, x}(0) =x$.
If  $c$ is a
geodesic with $c(-\infty) =\eta$ and $c (0) \in B(x,R)$ then Lemma \ref{lem:GB} implies
$$
d(p, c(-r)) \le d(p,c_{\eta, x} (-r)) + d(c_{\eta, x} (-r), c(-r)) \le R + 19 \delta + 3R = q(R)
$$
Hence  $c(0) \in B(x, R)$ implies $v = \dot c (-r) \in D(x,R', R)$ and if 
$P: S\wM \to \sqbd$ is the endpoint 
projection map given by $P(v) = (c_v(-\infty), c_v (+\infty))$ we obtain that $P(D(x,R',R))$
contains the set 
$$
A := \bigcup_{\eta \in pr_x(B(p,R))} \{\eta\} \times pr_\eta (B(x,R)).
$$
For $\eta \in pr_x(B(p, R))$ choose a point $q \in  B(p, R)$
that lies on the geodesic $c_{\eta,x}$.
Then the definition of the Patterson-Sullivan measure and Proposition \ref{prop:PS-bds} 
imply
$$
\mu_p(pr_\eta(B(x, R))) \ge e^{-h R} \ \mu_q(pr_\eta(B(x, R)))
\ge \frac{e^{-h R}}{b} e^{-h d(q, x)} \ge b' e^{-h d(p, x)}
$$
for a constant $b' >0 $. Since for a given  point in $A$ the time a corresponding
geodesic  spends in
$D(x,R',R)$ is larger than $2(R' -q(R)) \ge 1$, the desired estimate follows
from the definition of the measure $\tmu$.
\end{proof}

\begin{lemma}\label{lem: r_0 separated}   
Choose some $\rho >0$  and assume $x \in \wM$ such that $d(x, p) \ge n +R+ R'$. Then  for $r_0 = 4 \delta + 3\rho$
the  cardinality of a $(d_n,2 r_0) $-separated set of $D(x,R', R)$ is  
bounded from above by a constant $a( \rho ,R', R)$. 
\end{lemma}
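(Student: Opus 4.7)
The idea is to apply the coarse expansivity Lemma~\ref{lem:endpts-suffice} twice: once to confine the time-$n$ endpoints of all vectors in $D(x,R',R)$ to a ball of bounded radius, and once to convert $(d_n, 2r_0)$-separation into endpoint separation. A standard packing argument in $\wM\times\wM$ then yields the bound.

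First I would analyse the geometry of $D(x,R',R)$. For each $v \in D(x,R',R)$, let $r_v > 0$ be a time with $c_v(r_v)\in B(x,R)$. Since geodesics on $\wM$ are minimizing,
$$
r_v = d(c_v(0), c_v(r_v)) \in [d(p,x) - R - R',\, d(p,x) + R + R'],
$$
so under the hypothesis $d(p,x)\ge n+R+R'$ we have $r_v\ge n$, and any pair $v,w\in D(x,R',R)$ satisfies $|r_v-r_w|\le 2(R+R')$. Consequently
$$
d(c_v(r_v), c_w(r_v)) \le d(c_v(r_v),x) + d(x,c_w(r_w)) + |r_w-r_v| \le 4R+2R' =: \rho_1,
$$
while at time $0$ we trivially have $d(c_v(0), c_w(0)) \le 2R' \le \rho_1$. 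Applying Lemma~\ref{lem:endpts-suffice} to the geodesics $c_v, c_w$ on $[0, r_v]$ with parameter $\rho_1$ I obtain $d(c_v(t), c_w(t)) \le 4\delta+3\rho_1 =: C_1$ for every $t \in [0, r_v]$; in particular for $t=n$. Fixing any reference $v_0 \in D(x,R',R)$, this places every $c_v(n)$ inside the ball $B(c_{v_0}(n), C_1)$ of radius $C_1 = C_1(R,R',\delta)$ independent of $n$ and $x$.

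The key step is to use Lemma~\ref{lem:endpts-suffice} in the contrapositive direction. Suppose $v,w\in D(x,R',R)$ satisfy both $d(c_v(0), c_w(0))\le\rho$ and $d(c_v(n), c_w(n))\le\rho$. Then applying Lemma~\ref{lem:endpts-suffice} to $c_v,c_w$ on $[0,n]$ with parameter $\rho$ yields $d(c_v(t),c_w(t))\le 4\delta+3\rho=r_0$ for all $t\in[0,n]$, whence $d_n(v,w)\le r_0<2r_0$. Therefore, for any two distinct members $v\neq w$ of a $(d_n, 2r_0)$-separated subset of $D(x,R',R)$,
$$
\max\{d(c_v(0), c_w(0)),\, d(c_v(n), c_w(n))\} > \rho.
$$

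To conclude I would pass to the endpoint map $v \mapsto (c_v(0), c_v(n))$. Since $\wM$ has no conjugate points, the geodesic between any two points of $\wM$ is unique, so this map is injective on $S\wM$ when $n>0$ (and the case $n=0$ is immediate since $d_0(v,w)=d(c_v(0),c_w(0))$). By the previous step the image of any $(d_n, 2r_0)$-separated subset of $D(x,R',R)$ is a $\rho$-separated set, in the $\ell^\infty$ product metric, contained in the product ball $B(p, R')\times B(c_{v_0}(n), C_1)\subset \wM\times\wM$. Because $\wM$ covers the compact manifold $M$ isometrically, balls of any fixed radius $r$ in $\wM$ have volume bounded above by a constant $V(r)$, while balls of radius $\rho/2$ have volume bounded below by a positive constant $v(\rho)$; a standard packing argument then bounds the cardinality by
$$
\frac{V(R'+\rho/2)\cdot V(C_1+\rho/2)}{v(\rho)^2} =: a(\rho,R',R).
$$
The only nontrivial ingredient is the twofold use of Lemma~\ref{lem:endpts-suffice}; everything else is packing.
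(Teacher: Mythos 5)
Your proof is correct and rests on the same key idea as the paper's: Lemma~\ref{lem:endpts-suffice} shows that a segment of $D(x,R',R)$ is pinned down to within $d_n$-distance $r_0$ by a pair of endpoints lying in two uniformly bounded balls, after which one simply counts. The paper implements the count as a covering argument (the geodesics joining points of maximal $\rho$-separated nets of $B(p,R')$ and $B(x,R)$ form an $r_0$-spanning set for $d_n$, so a $2r_0$-separated set meets each such ball at most once), whereas you run the dual packing argument on the endpoint pairs $(c_v(0),c_v(n))$ with a volume bound; this is a cosmetic difference, and your extra preliminary step confining the points $c_v(n)$ to a ball of radius $C_1$ depending only on $R$, $R'$ and $\delta$ is sound.
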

\begin{proof}
Choose maximal $\rho$-separated sets $P= \{p_1 \ldots p_m\}$ of
$B(p,R')$ and 
$Q= \{q_1 \ldots q_\ell\}$ of $B(x,R)$. Their cardinalities are obviously 
bounded by numbers depending only on $R'$ and $R$. Let $c_{ij}$ denote 
the geodesic with $c_{ij}(0) = p_i$ and $c_{ij}(d(p_i,q_j))= q_j$. 
Therefore, if 
$c: [0,T] \to X$ is a geodesic arc connecting  points in $B(p,R')$ and 
$B(x,R)$. Then $T \ge n$ and there exist $p_i \in B(p,R')$ and $q_j \in B(x,R)$ such that 
$c(0) \in 
B(p_i, \rho)$ and $c(T) \in B(q_j, \rho)$. 
By Lemma \ref{lem:endpts-suffice}
 we have $d(c(t), c_{ij}(t)) \le 4 \delta + 3\rho =r_0$ 
for $0 \le t \le n$. If we define $v_{ij} = \dot c_{ij}(0)$ it follows from 
the definition \ref{eqn:dyn-metric} of $d_n$ that $\cup B_{d_n}(v_{ij}, r_0) \supset D(x,R',R)$. 
Hence, the cardinality of a $(d_n, 2r_0)$-separated set of $D(x,R',R)$ is bounded by
$\ell \cdot m$. The number $\ell \cdot m$ is obviously bounded by a constant 
depending only on $\rho$,  $R$ and $R'$. 
\end{proof}
Let $ \FFF$ be a fundamental domain with $p\in \FFF$ and  $R' \ge q(R) + \frac {1}{2}$ such that $\FFF \subset B(p,R^\prime)$ as above. Choose $r(n) = 2n + R' + 2R$. 
Consider a maximal 
$2R$-separated set $x_1 \ldots x_{k(n)}$ of the geodesic sphere
$S(p,r(n))$ of radius $r(n)$ about $p$. Since this set is $2R$-separated,
the balls $B(x_i,R)$ are pairwise disjoint, and since it is maximal, the
balls $B(x_i,2R)$ cover $S(p,r(n))$. Consequently the sets $D(x_i, R', R)$
are pairwise disjoint and the sets $D(x_i, R',2R)$ cover $SB(p,R')$. Thus
we can choose a partition 
$F_{1}^{2n} \ldots F_{k(n)}^{2n}$ of $SB(p,R')$ such that 
$$
D(x_i, R', R) \subset F_{i}^{2n} \subset D (x_i, R', 2R)
$$
for each $i$. Let $Q:SB(p,R') \to SM$ be the restriction to $SB(p,R')$ of the
projection from  $S\wM$ to $SM$ and let $L_i^{2n} = Q(F_i^{2n})$.
\begin{lemma}\label{lem:coveringL}      
$L^{2n} = \{L^{2n}_1 \ldots L^{2n}_{k(n)}\}$ is  a covering of $SM$.
There are constants $\alpha, \beta > 0$ independent of $n$, such that any $v \in SM$
lies in at most $\alpha$ sets from $L^{2n}$ and $\mu(L_i^{2n}) \ge \beta e^{-2hn}$ 
for all $i$.
\end{lemma}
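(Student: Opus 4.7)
The plan is to verify the three assertions of the lemma separately, relying on the fact that $\{F_i^{2n}\}$ is a partition of $SB(p,R')$ (built into its construction, together with the disjointness/covering observations already stated before the lemma) and on the bounded multiplicity of the covering projection $Q$ restricted to $SB(p,R')$. For the covering property, the discussion preceding the lemma already shows $\bigcup_i D(x_i,R',2R)=SB(p,R')$, hence $\bigcup_i F_i^{2n}=SB(p,R')$. Since $\FFF\subset B(p,R')$ and $Q(S\FFF)=SM$, applying $Q$ gives $\bigcup_i L_i^{2n}\supset Q(S\FFF)=SM$.

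For the bounded multiplicity $\alpha$, I would fix $v\in SM$ together with a distinguished lift $\tilde v_0\in S\FFF$ and observe that every lift of $v$ has the form $\gamma\tilde v_0$ with $\gamma\in\Gamma$. The condition $\gamma\tilde v_0\in SB(p,R')$ means $d(\gamma\pi(\tilde v_0),p)\le R'$; since $\pi(\tilde v_0)\in\FFF\subset B(p,R')$, the triangle inequality gives $d(\gamma p,p)\le 2R'$. By discreteness of the $\Gamma$-action on $\wM$, the number of eligible $\gamma$ is bounded by $\alpha:=\card\Gamma_{2R'}(p)$, which depends only on $R'$ and not on $n$. Since $\{F_j^{2n}\}_j$ partitions $SB(p,R')$, each eligible lift lies in exactly one $F_j^{2n}$, so $v$ belongs to at most $\alpha$ of the sets $L_j^{2n}$.

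For the lower bound, I would apply Lemma \ref{lem: lower bound mu(D)} via the inclusion $D(x_i,R',R)\subset F_i^{2n}$, using that $d(p,x_i)=r(n)=2n+R'+2R$, to obtain
\[
\tmu(F_i^{2n})\ge\tmu(D(x_i,R',R))\ge c'e^{-h(2n+R'+2R)}=c''\,e^{-2hn},
\]
with $c''=c'e^{-h(R'+2R)}$ independent of $i$ and $n$. To turn this into a bound on $\mu$, I would use the defining relation $C\mu(A)=\tmu(\pi^{-1}(A)\cap S\FFF)$, where $C=\tmu(S\FFF)$ is a fixed normalizing constant, and decompose $F_i^{2n}$ along the $\Gamma$-tessellation. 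Using $\Gamma$-invariance of $\tmu$,
\[
\tmu(F_i^{2n})=\sum_{\gamma\in\Gamma}\tmu(\gamma F_i^{2n}\cap S\FFF)\le\alpha\,\tmu(\pi^{-1}(L_i^{2n})\cap S\FFF)=\alpha C\mu(L_i^{2n}),
\]
where the factor $\alpha$ is the same multiplicity bound from the previous paragraph, now applied to the family $\{\gamma F_i^{2n}\cap S\FFF\}_\gamma$ (a point in this union is counted once for each distinct lift in $F_i^{2n}\subset SB(p,R')$). Setting $\beta:=c''/(\alpha C)$ yields $\mu(L_i^{2n})\ge\beta e^{-2hn}$. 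The only delicate step is this last piece of bookkeeping — coordinating $\Gamma$-invariance of $\tmu$ with the projection formula for $\mu$ — but it reduces to the same packing argument used for $\alpha$, so no genuinely new difficulty appears beyond careful accounting.
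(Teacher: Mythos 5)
Your proof is correct and follows essentially the same route as the paper: $Q$ is an onto, at most $\alpha$-to-one map (with $\alpha$ bounded via properness of the $\Gamma$-action on $B(p,2R')$), so it carries the partition $\{F_i^{2n}\}$ to a cover of multiplicity at most $\alpha$, and the lower bound comes from $D(x_i,R',R)\subset F_i^{2n}$ together with Lemma \ref{lem: lower bound mu(D)} and the inequality $\tmu(A)\le\alpha\,C\,\mu(Q(A))$. The paper states that last inequality in one line ("$\alpha\mu(Q(A))\ge\mu(A)$") and absorbs the factor $e^{-h(R'+2R)}$ and the normalization into the constant; your explicit tessellation bookkeeping is just the careful version of the same step.
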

\begin{proof}
Let $\alpha$ be the cardinality of elements $\gamma \in \Gamma$ such that $\gamma(\FFF) \cap B(p, R') \not= 0$.
Then the map $Q$ is at most $\alpha$ to 1. Also $Q$ is onto, because $B(p, R') \supset
\FFF$. Surjections that are at most $\alpha$ to 1 carry partitions to covers that are at most
$\alpha$ to 1. \\
We also have $\alpha \mu(Q(A)) \ge \mu(A)$ for any measurable set $A \subset SB(p,R')$, in 
particular $A = D(x_i, R',R)$. Since $F_i^{2n} \supset D(x_i, R',R)$, it follows
using Lemma \ref{lem: lower bound mu(D)}   that $\mu(L_i^{2n}) \ge c' \alpha^{-1}e^{-2hn}$, where $c'$ is as in Lemma 
 \ref{lem: lower bound mu(D)}  
\end{proof}
Let $ V = \{v_1 \ldots v_m \}$ be a maximal
($d_{2n}$, $2r_0$)-separated set of $SM$ where $r_0 =4 \delta + 3\rho$ as in Lemma \ref{lem: r_0 separated}. Choose a 
partition $\BBB$ such that for each $B \in \BBB $ there exists 
$v_i \in V$ such that 
$$
B_{d_{2n}}(v_i,r_0) \subset B \subset B_{d_{2n}} (v_i, 2r_0).
$$
\begin{lemma}\label{lem: est-partition}     
Let $L^{2n}= \{L_i^{2n}\}$ be the covering introduced above. Then for each 
fixed i 
$$
\card \{B \in \BBB \mid B \cap L_i^{2n}\not= \emptyset\} \le a(\rho, R',2R),
$$
where $a( \rho,  R', 2R)$ is as in Lemma \ref{lem: r_0 separated}.   

\end{lemma}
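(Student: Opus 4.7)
The plan is to transfer the counting question from $SM$ to $S\wM$ so that Lemma \ref{lem: r_0 separated} can be applied. First I would associate to each distinct $B \in \BBB$ meeting $L_i^{2n}$ its center $v_j \in V$ with $B_{d_{2n}}(v_j, r_0) \subset B \subset B_{d_{2n}}(v_j, 2 r_0)$; since the $B_j$'s are pairwise disjoint and each contains its inner Bowen ball, the $v_j$'s are distinct, and the $(d_{2n}, 2 r_0)$-separation of $V$ carries over to them. Choosing any $w_j \in B_j \cap L_i^{2n}$ then gives $d_{2n}(v_j, w_j) \le 2 r_0$ in $SM$, and I would lift $w_j$ to $\tilde w_j \in F_i^{2n} \subset D(x_i, R', 2R)$.

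The central step is the compatible lifting of the centers. Since $2 r_0 = 8\delta + 6\rho$ stays below $\inj(M) > 16\delta$ for $\rho$ suitably small, the unique-lift principle inside the injectivity-radius tube around $c_{\tilde w_j}$ produces a unique lift $\tilde v_j$ of $v_j$ with $d(c_{\tilde v_j}(t), c_{\tilde w_j}(t)) \le 2 r_0$ for all $t \in [0, 2n]$, i.e.\ $d_{2n}(\tilde v_j, \tilde w_j) \le 2 r_0$ in $S\wM$. Because the $d_{2n}$-distance on $SM$ is bounded above by the $d_{2n}$-distance on $S\wM$ between any pair of lifts, the $(d_{2n}, 2 r_0)$-separation of the $v_j$'s transfers to the $\tilde v_j$'s in $S\wM$.

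Next I would place the $\tilde v_j$'s inside a $D$-region to which Lemma \ref{lem: r_0 separated} applies. The footpoint of $\tilde v_j$ lies in $B(p, R' + 2 r_0)$, and from $\tilde w_j \in D(x_i, R', 2R)$ together with $x_i \in S(p, r(n))$, $r(n) = 2n + R' + 2R$, one reads off that $c_{\tilde w_j}$ enters $B(x_i, 2R)$ at some time $s \in [2n,\, 2n + 2R' + 4R]$, which forces $d(c_{\tilde w_j}(2n), x_i) \le 2R' + 6R$. Combined with $d(c_{\tilde v_j}(2n), c_{\tilde w_j}(2n)) \le 2 r_0$, this yields $\tilde v_j \in D(x_i, R' + 2 r_0, 2R' + 6R + 2 r_0)$. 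Applying Lemma \ref{lem: r_0 separated} to this enlarged $D$-region at a reduced dynamic time $n' \le 2n$ satisfying $d(x_i, p) \ge n' + (R' + 2 r_0) + (2R' + 6R + 2 r_0)$, and using that $(d_{2n}, 2 r_0)$-separation implies $(d_{n'}, 2 r_0)$-separation, bounds the number of indices $j$ by a constant depending only on $\rho$, $R'$, and $R$, which is the constant $a(\rho, R', 2R)$ in the statement (up to relabeling).

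The main obstacle is precisely the lifting step: without $\inj(M)$ dominating $2 r_0$, two $(d_{2n}, 2 r_0)$-separated vectors in $SM$ could admit nearby lifts in $S\wM$, destroying the passage to a separated family in the cover, and Lemma \ref{lem: r_0 separated} would no longer apply. The hypothesis $\inj(M) > 16\delta$ of Theorem \ref{thm:uniqueMME-inj}, together with $r_0 = 4\delta + 3\rho$, is exactly tuned so this unique-lift argument succeeds.
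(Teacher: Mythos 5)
Your argument is correct and follows essentially the same route as the paper: bound the count by the number of centers $v_j$ whose outer Bowen balls meet $L_i^{2n}$, lift them compatibly to the universal cover using $2r_0<\inj(M)$, and apply Lemma \ref{lem: r_0 separated}. You are in fact more careful than the paper's terse proof about the fact that the lifted centers land only in an enlarged region $D(x_i,\,R'+2r_0,\,2R'+6R+2r_0)$ rather than in $F_i^{2n}$ itself, which changes the resulting constant only up to relabeling and does not affect the application.
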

\begin{proof}
From the definition of $\BBB$ follows that 
$$
\card\{B \in \BBB \mid B \cap L_i^{2n} \not= \emptyset\}
\le \card\{v_j \in V \mid B_{d_{2n}}(v_j, 2r_0)  \cap L_i^{2n} 
\not= \emptyset\} .
$$ 
Since $L_i^{2n} = Q (F_{i}^{2n})$ we can lift the set   
$\{v_j \in V \mid B_{d_{2n}}(v_j, 2r_0)  \cap L_i^{2n} 
\not= \emptyset\}$ to an $2r_0$-separated set of
$F_{i}^{2n} \subset D(x_i,R',2R)$. Therefore the result follows from
 Lemma \ref{lem: r_0 separated}.
\end{proof}
\begin{lemma}\label{lem:sep-measures} 
Let $\FFF \subset \wM$ be  a fundamental domain, $\bar \FFF$ its closure and
let $\nu$ be a Borel probability measure on $SM$. For a fixed 
constant $b > 0$ consider a sequence $A^n$ of measurable
coverings of $SM$ such that for 
each $n$ and $v,w\in A \in A^n$ there are lifts $\tilde v,\tilde w \in S\bar \FFF$ such that $d(c_{\tilde v}(t),(c_{\tilde w}(t)) \le b$
for all $t \in [-n,n]$. Let
$\Omega \not= SM$ be a set containing the complement $SM \setminus \EE$ of the expansive set $\EE$.  
Then there exists a union $C_n$ of subsets of $A^n$ such 
that $\nu(C_{n_j} \Delta \Omega):= \nu(C_{n_j} \setminus \Omega)+ \nu(\Omega \setminus C_{n_j}) 
\to 0$ for some subsequence $n_j$ as $j \to \infty$.
\end{lemma}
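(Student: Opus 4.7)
\medskip
\noindent\textbf{Proof plan.}
I would take $C_n$ to be the outer approximation
\[
C_n := \bigcup \{ A \in A^n : A \cap \Omega \neq \emptyset \},
\]
a union of elements of $A^n$ automatically containing $\Omega$, so that $C_n \Delta \Omega = C_n \setminus \Omega$. By the reverse Fatou inequality,
\[
\limsup_{n \to \infty} \nu(C_n \setminus \Omega) \le \nu(E), \qquad E := \bigcap_{N \ge 1}\bigcup_{n \ge N}(C_n \setminus \Omega),
\]
so it is enough to prove $\nu(E) = 0$; the required subsequence $n_j$ can then be chosen to realize this $\limsup$.

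To analyze $E$ I would apply a compactness extraction. For $v \in E$, there are infinitely many $n$ with $v \in A_n \in A^n$ and some $w_n \in A_n \cap \Omega$; the covering hypothesis provides lifts $\tilde v_n, \tilde w_n \in S\bar\FFF$ with $d(c_{\tilde v_n}(t), c_{\tilde w_n}(t)) \le b$ for $|t| \le n$. Since $\bar\FFF$ meets only finitely many $\Gamma$-translates of $\FFF$, the fiber of $v$ in $S\bar\FFF$ is finite, so on a subsequence $\tilde v_n \equiv \tilde v$ is constant; by compactness of $S\bar\FFF$ we may further extract $\tilde w_n \to \tilde w$, and continuity of the geodesic flow yields $d(c_{\tilde v}(t), c_{\tilde w}(t)) \le b$ for all $t \in \R$. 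Thus every $v \in E \subseteq SM \setminus \Omega \subseteq \EE$ is expansive but is $b$-shadowed for all time by some $w := \pi(\tilde w) \in \overline{\Omega}$.

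The main obstacle is upgrading this geometric conclusion to $\nu(E) = 0$: the pointwise expansivity constant $\epsilon_v$ may be smaller than the fixed $b$, and $\overline{\Omega} \setminus \Omega$ need not be $\nu$-null. I would combat the second difficulty with inner regularity of $\nu$, replacing $\Omega$ by a compact $F \subseteq \Omega$ with $\nu(\Omega \setminus F) < \varepsilon$ and rerunning the extraction with $F$ in place of $\Omega$ so that the limit $w$ genuinely lies in $\Omega$. For the first difficulty, I would stratify $\EE$ by the flow-invariant pieces $\EE_m := \{v \in \EE : \epsilon_v > 1/m\}$; on $\EE_m$ with $1/m < b$, expansivity forces $w \in \phi^\R v$, and a further divergence argument in the Gromov hyperbolic setting (invoking Lemma \ref{lem:endpts-suffice} together with the injectivity-radius bound $\inj(M) > 16\delta$) will reduce this to $w = v$, contradicting $v \in E$ via $v = w \in F \subseteq \Omega$. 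The residual $\phi^t$-invariant set $\bigcap_m(\EE \setminus \EE_m)$ of non-uniformly expansive vectors, as well as any atoms of $\nu$ on closed geodesic orbits, will have to be handled separately by a direct hyperbolicity argument, and a diagonal selection over $\varepsilon \to 0$ then produces the required subsequence.
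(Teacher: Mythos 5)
Your opening moves --- the outer union $C_n$, the reverse Fatou reduction to $\nu(E)=0$, and the compactness extraction of lifts in $S\bar\FFF$ --- are sound, but the proof is not actually closed: everything after ``the main obstacle'' is a list of difficulties with deferred resolutions (``will have to be handled separately by a direct hyperbolicity argument''), and those deferred steps are exactly where the content of the lemma lies. The decisive point you are missing is that in this paper expansivity is \emph{scale-free} on the universal cover: by Definition \ref{dfn:expansiveset}, a lift $\tilde v\in\wE$ is characterized by the property that its flow line is the \emph{only} preimage under the endpoint map $P$ of the pair $(c_{\tilde v}(-\infty),c_{\tilde v}(+\infty))$. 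Two complete geodesics of $\wM$ staying within distance $b$ of each other for all time determine the same pair of boundary points for \emph{any} $b$, however large, so your limit relation $d(c_{\tilde v}(t),c_{\tilde w}(t))\le b$ for all $t\in\R$ already forces $c_{\tilde w}$ to be a time shift of $c_{\tilde v}$ (with shift at most $b$, since $c_{\tilde v}$ is minimizing). There is no pointwise expansivity constant $\epsilon_v$ to compare with $b$; consequently the stratification $\EE_m$, the ``residual set of non-uniformly expansive vectors,'' and the separate treatment of atoms are all unnecessary --- and, conversely, without this observation the ``direct hyperbolicity argument'' you promise for the residual set is not supplied and has no evident source in the hypotheses. (The appeal to $\inj(M)>16\delta$ is also misplaced: that is a hypothesis of Theorem \ref{thm:uniqueMME-inj}, not of this lemma, whose proof uses no injectivity radius bound.)

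Once the scale-free expansivity is in hand, your proposed fix for the second difficulty essentially reproduces the paper's argument, which is structured more simply: for each $j$ choose compact sets $K_1\subset\Omega$ and $K_2\subset SM\setminus\Omega$ with $\nu(\Omega\setminus K_1)<1/j$ and $\nu((SM\setminus\Omega)\setminus K_2)<1/j$, set $C_n=\bigcup\{A\in A^n: A\cap K_1\ne\emptyset\}$, and show by a single compactness argument (no Fatou, no $\limsup$ set) that for $n=n_j$ large enough no $v\in K_1$ and $w\in K_2$ admit lifts in $S\bar\FFF$ that stay $b$-close on $[-n_j,n_j]$: a limit configuration would $b$-shadow for all time the expansive vector $w_0\in K_2\subset\EE$. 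This gives $K_1\subset C_{n_j}$ and $C_{n_j}\cap K_2=\emptyset$ simultaneously, hence $\nu(C_{n_j}\Delta\Omega)\le 2/j$. Note finally that the shadowing conclusion is $v_0=\phi^{t_0}(w_0)$ with $|t_0|\le b$, not $v_0=w_0$; your promise to ``reduce this to $w=v$'' cannot be kept, and one must instead use that $\Omega$ may be taken flow-invariant (as it is in the application, where $SM\setminus\EE$ is flow-invariant and $\Omega$ comes from mutually singular flow-invariant measures). This subtlety is present in the paper's proof as well, but your version needs it just as much, so it should be addressed rather than replaced by an unobtainable $w=v$.
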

\begin{proof}
For each $ j \in \N$ we can choose compact sets $K_1 \subset \Omega, K_2 \subset SM \setminus \Omega$  
with $\nu(\Omega \setminus K_1) < \frac{1}{j} $ and $\nu((SM\setminus \Omega)\setminus 
K_2) < \frac{1}{j}$. 
Consider $C_n:= \bigcup \{A\in A^n \mid A \cap K_1 \not= \emptyset\}$. 
Then there exists $n_j \in \N$ such that for all $v\in K_1$ and $w\in K_2$ and all lifts
$\tilde v, \tilde w \in S \bar \FFF$ such that  $d(c_{\tilde v}(t),c_{\tilde w}(t)) > b$ for some
$t \in[-n_j,n_j]$. 
Suppose this is not 
the case, we would find for all $n \in \N$ elements
$v_n \in K_1$,  $w_n \in K_2$ and lifts $\tilde v_n \in  S \bar \FFF $, $\tilde w_n \in  S \bar \FFF$ 
such that  $d(c_{\tilde v_n}(t),(c_{\tilde w_n}(t)) \le b$ for all
$t \in[-n,n]$.  By the compactness of $K_1, K_2$ and   $S \bar \FFF $ we can assume that $v_n$,  $w_n $, 
 $\tilde v_n$ and $\tilde w_n \in  S \bar \FFF$ converges to $v_0 \in K_1$, $w_0 \in K_2$, $ \tilde v_0 \in  S \bar \FFF$ and $w_0 \in  S \bar \FFF$. This implies that  $d(c_{\tilde v_0}(t),(c_{\tilde w_0}(t)) \le b$ for all
$t \in \R$ which contradicts the fact that $w_0$ belongs to the expansive set $\EE$.  

The construction of $C_n$ implies $K_1 \subset C_n$. Furthermore,  we have 
$C_{n_j} \cap K_2 = \emptyset$, since otherwise we would find $A \in A^{n_j}$ with 
$w \in K_2 \cap A$ and $v \in A \cap K_1$. The definition of  $A^{n_j}$ implies that
there are lifts $\tilde v,\tilde w \in S\bar \FFF$ such that $d(c_{\tilde v}(t),(c_{\tilde w}(t)) \le b$
for all $t \in [-n_j,n_j]$, in contradiction to $d(c_{\tilde v}(t),c_{\tilde w}(t)) > b$ for some
$t \in [-n_j,n_j]$. 
Therefore, we obtain:
$$
\nu(C_{n_j} \Delta \Omega) = \nu(C_{n_j} \setminus \Omega) + \nu(\Omega \setminus C_{n_j}) \le 
\nu((C_{n_j} \setminus \Omega) \setminus K_2) + \nu(\Omega \setminus K_1) \le \frac{2}{j}.
$$
\end{proof}
\begin{lemma}\label{4L}   
Let $L^{2n} =\{L_1^{2n} \ldots L_k^{2n}\}$ be the covering of $SM$ introduced
in Lemma \ref{lem:coveringL}.
Then the 
sequence of coverings $A^n = \{\phi^n L_i^{2n}\}$ has the property of 
$A^n$ stated in Lemma \ref{lem:sep-measures}.  
\end{lemma}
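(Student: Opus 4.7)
The plan is to unfold by $\phi^{-n}$ to reduce elements of $\phi^nL_i^{2n}$ to vectors $\bar v,\bar w\in L_i^{2n}$ whose lifts to $F_i^{2n}\subset SB(p,R')$ yield geodesics that both start in $B(p,R')$ and both meet $B(x_i,2R)$ at times comparable to $2n$. Gromov hyperbolicity then forces the pair to be uniformly close on the intermediate $2n$-window, and a single deck transformation transports everything back to (a small enlargement of) $\bar\FFF$ without spoiling the estimate.

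Concretely, for $v,w\in\phi^nL_i^{2n}$ I would write $v=\phi^n\bar v,\;w=\phi^n\bar w$ and lift $\bar v,\bar w$ to $\tilde{\bar v},\tilde{\bar w}\in F_i^{2n}\subset SB(p,R')$. By the definition of $D(x_i,R',2R)$ the geodesics $c_{\tilde{\bar v}},c_{\tilde{\bar w}}$ start in $B(p,R')$ and meet $B(x_i,2R)$ at respective times $r_{\bar v},r_{\bar w}$; the identity $d(p,x_i)=r(n)=2n+R'+2R$ together with the triangle inequality pins both $r_{\bar v},r_{\bar w}$ into $[2n,\,2n+2R+2R']$. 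Truncating to $T:=\min(r_{\bar v},r_{\bar w})\ge 2n$, the endpoints of the two segments satisfy $d(c_{\tilde{\bar v}}(0),c_{\tilde{\bar w}}(0))\le 2R'$ and $d(c_{\tilde{\bar v}}(T),c_{\tilde{\bar w}}(T))\le 6R+2R'$ (the extra piece of the longer segment costs at most $2R+2R'$). Lemma \ref{lem:endpts-suffice} then yields a constant $b_0=b_0(\delta,R,R')$, independent of $n,i,v,w$, with
$$
d(c_{\tilde{\bar v}}(s),c_{\tilde{\bar w}}(s))\le b_0\qquad\text{for all }s\in[0,2n].
$$

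Next I would pick a single $\gamma\in\Gamma$ with $\gamma c_{\tilde{\bar v}}(n)\in\bar\FFF$ and set $\tilde v:=\gamma\phi^n\tilde{\bar v}$, $\tilde w:=\gamma\phi^n\tilde{\bar w}$. Because $\gamma$ is an isometry, $c_{\tilde v}(t)=\gamma c_{\tilde{\bar v}}(n+t)$ and $c_{\tilde w}(t)=\gamma c_{\tilde{\bar w}}(n+t)$, so
$$
d(c_{\tilde v}(t),c_{\tilde w}(t))=d(c_{\tilde{\bar v}}(n+t),c_{\tilde{\bar w}}(n+t))\le b_0
$$
for every $t\in[-n,n]$, which is the required estimate with $b:=b_0$.

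The step I expect to be the main obstacle is that the common choice of $\gamma$ places $\tilde v$ in $S\bar\FFF$ but only places $\tilde w$ into the compact set $S\,\overline{N_{b_0}(\bar\FFF)}$. Replacing $\tilde w$ by $\alpha\tilde w$ for some $\alpha\in\Gamma$ with $\alpha\gamma c_{\tilde{\bar w}}(n)\in\bar\FFF$ would destroy the clean isometry identity above: the displacement of the conjugate $\gamma^{-1}\alpha\gamma$ at the moving point $c_{\tilde{\bar w}}(n+t)$ grows linearly in $|t|$, giving no uniform bound at $|t|\sim n$. The way around this is to observe that Lemma \ref{lem:sep-measures} uses $\bar\FFF$ only through its compactness in its diagonal-subsequence argument, so the statement there remains valid (with the same $b$) if $S\bar\FFF$ is replaced throughout by the compact enlargement $S\,\overline{N_{b_0}(\bar\FFF)}$. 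With this mild widening of the target set the required property follows for $A^n=\{\phi^nL_i^{2n}\}$.
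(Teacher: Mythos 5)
Your proof is correct and follows essentially the same route as the paper: lift to $F_i^{2n}\subset D(x_i,R',2R)$, bound the two endpoint distances using the definition of $D$ and the fact that $x_i$ lies on $S(p,r(n))$ with $r(n)=2n+R'+2R$, invoke Lemma \ref{lem:endpts-suffice} to get a uniform bound on the whole window $[0,2n]$, and then recentre by $\phi^n$. The additional point you raise --- that after normalizing $\phi^n\tilde{\bar v}$ into $S\bar\FFF$ by a deck transformation the corresponding lift of $w$ only lands in a bounded compact neighborhood of $S\bar\FFF$, so that Lemma \ref{lem:sep-measures} must be read with $S\bar\FFF$ replaced by such an enlargement --- is a real gap the paper leaves implicit, and your fix is valid because the proof of that lemma uses only the compactness of the reference set; the small arithmetic slip in your time window (it should be $[2n,\,2n+4R+2R']$ rather than $[2n,\,2n+2R+2R']$) only changes the value of the constant $b$.
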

\begin{proof}
Let $Q:SB(p,R') \to SM$ be the restriction to $SB(p,R')$ of the
projection from  $S\wM$ to $SM$.  Then for each  $i \in \{1, \ldots  k$  we have $L_i^{2n} = Q(F_i^{2n})$.
Since $F_i^{2n} \subset D(x_i,R',2R)$ we have $d(c_v(0), (c_w(0)) \le 2 R^\prime$ and  $d(c_v(T_1), (c_w(T_2)) \le 4 R$ for all $v,w \in F_i^{2n} $ where
$2n = r(n) -2R - R' \le T_i \le r(n)  +2R +R' \le 2n + 4R +2R'$. Therefore,
\begin{align*}
d(c_v(2n), c_w(2n)) & \le d(c_v(2n), c_v(T_1) +d(c_v(T_1), c_w(T_2)) +d(c_w(T_2), c_w(2n))\\
& \le 12R +4R'
\end{align*}
Hence, Lemma  \ref{lem:endpts-suffice} implies $d(c_v(t), c_w(t)) \le 3 (12R +4R^\prime) + 4 \delta$ and therefore the 
sequence of coverings $A^n = \{\phi^n L_i^{2n}\}$ has the property stated in Lemma \ref{lem:sep-measures}  for
$b =  3 (12R +4R^\prime) + 4 \delta $.

\end{proof}
Furthermore, we will need the following standard lemma, whose proof we omit (see e.g. \cite{gK98}).

\begin{lemma}\label{4M}    
Let $a_1 \ldots a_k \ge 0$ and $\sum_{i=1}^{k} a_i \le 1$. Then
$$
- \sum_{i=1}^{k} a_i \log a_i \le \left(
\sum_{i=1}^{k} a_i\right) \log k + \frac{1}{e} \;.
$$
\end{lemma}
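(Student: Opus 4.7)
The plan is to reduce Lemma \ref{4M} to two standard scalar inequalities: the maximum entropy bound for probability distributions and the maximum of the function $x\mapsto -x\log x$ on $(0,1]$. First, if $S := \sum_{i=1}^k a_i = 0$ then every $a_i = 0$ and both sides vanish under the usual convention $0\log 0 = 0$, so I may assume $S \in (0,1]$.

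Next, I normalize by setting $b_i := a_i / S$, so that $(b_1,\ldots,b_k)$ is a probability vector. A direct expansion gives
\begin{equation*}
-\sum_{i=1}^k a_i \log a_i \;=\; -\sum_{i=1}^k S b_i \bigl(\log S + \log b_i\bigr) \;=\; -S\log S \,-\, S\sum_{i=1}^k b_i \log b_i.
\end{equation*}
The classical Shannon bound (a one-line consequence of Jensen's inequality applied to the strictly concave function $-x\log x$, or equivalently of the nonnegativity of Kullback--Leibler divergence against the uniform distribution) yields $-\sum_{i=1}^k b_i \log b_i \le \log k$. Substituting back gives
\begin{equation*}
-\sum_{i=1}^k a_i \log a_i \;\le\; -S\log S \,+\, S\log k \;=\; -S\log S \,+\, \Bigl(\sum_{i=1}^k a_i\Bigr)\log k.
\end{equation*}

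It remains to bound the term $-S\log S$ uniformly for $S\in(0,1]$. The function $g(S) := -S\log S$ satisfies $g'(S) = -\log S - 1$, which vanishes at $S = 1/e$ and is positive on $(0,1/e)$ and negative on $(1/e,1]$. Hence $g$ attains its maximum on $(0,1]$ at $S = 1/e$, with value $g(1/e) = (1/e)\log e = 1/e$. Combining these inequalities gives the claim. There is no real obstacle here; the only mild subtlety is handling the degenerate case $S = 0$ via the convention $0\log 0 = 0$, which is standard in entropy calculations and is what the lemma implicitly assumes.
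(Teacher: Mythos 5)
Your proof is correct. The paper itself omits the proof of this lemma (deferring to \cite{gK98}), and your argument --- normalizing to a probability vector, applying the Shannon bound $-\sum_i b_i\log b_i\le\log k$, and bounding $-S\log S$ by its maximum $1/e$ on $(0,1]$ --- is exactly the standard route, with the degenerate case $S=0$ handled properly.
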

\subsection{Proof of Theorem \ref{thm:uniqueMME-inj}}
\begin{proof}
Assume that $\nu$ is a $\phi^t$-invariant measure with $h_{\nu} (\phi) = h(\phi) = h_{\mu} (\phi)$ but
 $\nu \not= \mu$. We want to show that this leads to a contradiction.
 Consider the affine decomposition
 $$
 \nu = \alpha \nu' + (1-\alpha) \mu'
$$ 
of  $\nu$ into invariant measures,
where $\nu'$ is singular and $\mu'$ is absolutely continuous with respect to $\mu$.
Since $\mu$ is ergodic we have $\mu = \mu'$. From 
$$
h = h_\nu(\phi) = \alpha h_{\nu'}(\phi) + (1-\alpha) h_\mu(\phi) =
\alpha h_{\nu'}(\phi) + (1-\alpha) h.
$$
we obtain $h_{\nu'} (\phi) = h$. 
Therefore, it is enough to show that there is no measure $\nu$ singular to $\mu$ such that $h_\nu(\phi) = h$.
Using the ergodic decomposition we an assume that $\nu$ is ergodic as well.
 
Since by assumption in Theorem  \ref{thm:uniqueMME-inj} we have that the injectivity radius of $M$ is larger than $16\delta$ 
 we can choose  $\rho >0$ such that $ r_0 = 4 \delta + 3\rho <\frac{\inj(M)}{4}$. 
 If the second condition holds we have that the geodesic flow is entropy expansive at a scale larger  than $ 16\delta$.
 In this case we additionally assume that $\rho >0$ is so small that the geodesic flow is entropy expansive at scale $ 4r_0$ as well.

Let $V = \{v_1 \ldots v_m\} \in SM$ be a 
maximal set $2 r_0$-separated with respect to $d_{2n}$.
   Choose a partition 
$\BBB^n$ such that for each $B\in \BBB^n$ we have $B_{d_{2n}}(v_i,r_0) \subset B \subset 
B_{d_{2n}}(v_i,2r_0)$ for some $v_i \in V$. 
This yields $\diam _{d_{2n}}\BBB^n < 4r_0 < \inj(M)$. 
If the first condition holds, i.e. the entropy non-expansive measure is smaller then the topological entropy. then the ergodicity of $\nu$ yields $\nu(\EE) =1 $
Therefore Proposition \ref{prop:entexpansive}  implies: 
$$
h_{\nu} (\phi^{2n}) = h_{\nu} 
(\phi^{2n}, \BBB^n) 
$$
Theorem \ref{thm:Bowen} implies that this also holds under the second condition since by the above choice of $\rho$ the geodesic flow is entropy expansive at scale $4r_0$. 
Therefore, in both cases we obtain:
\[
2n h_{\nu} (\phi^1) = h_{\nu} (\phi^{2n}) = h_{\nu} 
(\phi^{2n}, \BBB^n) \le H_{\nu} (\BBB^n) = -\sum_{B\in \BBB^n} \nu(B) \log 
\nu(B).
\]
Since $\nu$ and $\mu$ are mutually singular we find 
a set $\Omega \subset SM$ such that $\mu(\Omega)=0$ and $\nu(SM \setminus \Omega)=0$. 
Using that  $\mu(SM \setminus \EE)=0$ we can assume that $SM \setminus \EE \subset \Omega$.
Consider the covering  $L^{2n} = \{L_1^{2n} \ldots L_k^{2n}\}$
 of $SM$ as introduced in Lemma \ref{lem:coveringL}.
and its push forward $\{\phi^n L_i^{2n}\} = A^n$.
By Lemmata \ref{lem:sep-measures} and \ref{4L} we find for a subsequence $n =n_j$ sets $C_{n}$ consisting of a union of 
elements of $A^{n}$ such that 
$(\mu+\nu)(C_n \Delta \Omega )\to 0$ as $n=n_j \to \infty$. In particular, 
$\mu(C_n) \to 0$ and $\nu(SM\setminus C_n) \to 0$ as $n =n_j\to\infty$. 
Using Lemma~\ref{4M} we obtain 
\begin{align*}
2n h_\nu(\phi) \le  & - 
\sum_{\{B\in \BBB^n \mid \phi^n B \cap C_n \not= \emptyset\}} 
\nu (B) \log \nu (B)
 - \sum_{\{B\in \BBB^n \mid \phi^nB \cap C_n = \emptyset\}} 
\nu(B) \log \nu (B)\\
\le &  \left(\sum_{\{B\in \BBB^n \mid \phi^nB \cap C_n\not=\emptyset\}} \nu(B) 
\right) \log 
\card\{B\in\BBB^n \mid\phi^n B  \cap C_n \not=\emptyset\}\\
& +\left(\sum_{\{B\in \BBB^n \mid \phi^nB \cap C_n = \emptyset\}} \nu(B)\right)
\log\card
\{B \in \BBB^n \mid \phi^nB \cap C_n = \emptyset\} + \frac{2}{e}\\
\le & b_n\log \card\{B\in\BBB^n \mid \phi^n B  \cap C_n \not=\emptyset\}
+ (1-b_n)\log\card \{\BBB^n\} +\frac{2}{e} \;,
\end{align*}
where 
$$
b_n = \sum\limits_{\{B\in \BBB^n \mid \phi^nB \cap C_n\not=\emptyset\}} \nu(B).
$$
Using Lemma \ref{lem: est-partition}  we estimate 
\begin{align*}
\card\{B\in \BBB^n \mid  &\phi^nB \cap C_n\not= \emptyset\} \\
&=  \sum\limits_{\{L_i^{2n}\in 
L^{2n} \mid \phi^n L_i^{2n} \subset C_n\}} 
\card\{B\in \BBB^n \mid \phi^n B \cap \phi^n L_i^{2n} \not= \emptyset\} \\
&\le a(\rho, R^\prime, 2R) \card\{L_i^{2n} \in L^{2n} \mid \phi^n L_i^{2n} \subset C_n\}.
\end{align*}
Furthermore, using Lemma \ref{lem:coveringL}    
$$
\card\{L_i^{2n} \mid \phi^n L_i^{2n} \subset C_n\} =
\card\{L_i^{2n} \mid  L_i^{2n} \subset \phi^{-n} C_n\} 
$$
$$
\le \alpha \frac{\mu(C_n)}{\min \mu (L_i^{2n})} \le \alpha^\prime \mu(C_n) 
 e^{2hn}
$$
for a constant $\alpha^\prime> 0$. Moreover, replacing $C_n$ by $SM$, the
same argument yields
$$
\card \{\BBB^n \} \le a(\rho,R^\prime, 2R) \alpha \frac{\mu(SM)}{\min \mu (\phi^nL_i^{2n})} 
\le  \tilde \alpha e^{2hn}
$$
for a suitable constant $\tilde \alpha > 0$.
Then $h_{\nu}(\phi) =h$ implies
$$
- \frac{2}{e} \le b_n 
\log \left(\mu(C_n)\ \alpha^\prime {a(\rho, ,R^\prime ,2R)}\right)
+ (1 - b_n) \log (\tilde \alpha).
$$
Since
$$
1 \ge b_n = \sum_{\{B \in \BBB^n \mid \phi^n B \cap C_n \not= \emptyset\}} 
\nu (\phi^nB) \ge \nu(C_n) \to 1
$$
and $\mu(C_n) \to 0$ the first term of the
right hand side tends to $-\infty$ as $n =n_j \to 
\infty$. Since the second term tends to 0 we obtain a contradiction.
\end{proof}
\subsection{Proof of uniqueness of the MME}\label{subsec: uniquenessMME}
Using Theorem  \ref{thm:uniqueMME-inj} we can prove Theorem \ref{thm:main} and Theorem \ref{thm:expansiveMME} stated in the introduction.
For that we have to guarantee Riemannian covers of $(M,g)$ with large injectivity radius. 
A sufficient for  that is that the fundamental group of $M$ is residually finite.
\begin{definition}\label{def:residuallyfinite}
A group $G$ is \emph{residually finite} if the intersection of its finite index subgroups is trivial.
\end{definition}

\begin{theorem}\label{prop:hyp-metric}
Every closed manifold of dimension less or equal to three  has residually finite fundamental group.
\end{theorem}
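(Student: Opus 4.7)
The plan is to split by dimension and use the following facts: for surfaces the fundamental group is linear, and for $3$-manifolds we invoke the geometrization theorem combined with classical residual finiteness results. I record the argument as three increasingly difficult cases.

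\textbf{Dimension one and two.} In dimension one the only closed manifold is $S^1$, whose fundamental group is $\Z$; its finite-index subgroups are $n\Z$, and their intersection is trivial, so $\Z$ is residually finite. In dimension two, by the classification of closed surfaces $\pi_1(M)$ is either finite (when $M=S^2$ or $\R P^2$) or is the fundamental group of a surface of genus $g\ge 1$ (orientable) resp.\ of non-orientable genus $\ge 2$. In the infinite cases the surface admits a hyperbolic (or, for the torus and Klein bottle, a flat) metric, so $\pi_1(M)$ embeds as a discrete subgroup of $\mathrm{PSL}(2,\R)$, or of $\mathrm{Isom}(\R^2)$, and in particular into $\mathrm{GL}(n,\R)$ for some $n$. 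Malcev's theorem then asserts that every finitely generated linear group over a field is residually finite, which finishes dimensions one and two.

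\textbf{Dimension three.} Here I would invoke the geometrization theorem of Perelman (building on Thurston). Given a closed $3$-manifold $M$, first pass to the orientable double cover if necessary; residual finiteness ascends from finite-index subgroups by a standard diagonal argument, so we may assume $M$ is orientable. The prime decomposition writes $M$ as a connected sum of prime pieces, and by a theorem of Hempel it is enough to check residual finiteness for each prime piece. Each prime piece is either irreducible or $S^2\times S^1$; in the latter case $\pi_1=\Z$ and we are done. For irreducible pieces, the JSJ decomposition cuts $M$ along tori into geometric pieces, each modelled on one of the eight Thurston geometries. Hempel's theorem (completed unconditionally by Perelman's proof of geometrization, and using Agol's work in the case of closed hyperbolic $3$-manifolds, which are virtually fibered, hence virtually Haken, hence have residually finite fundamental group) asserts that the fundamental group of every geometric $3$-manifold is residually finite, and the residual finiteness of the whole group follows by gluing using Hempel's theorem on graph-of-groups built from residually finite vertex groups with finite-index edge groups of a controlled form.

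\textbf{Main obstacle.} The dimension-$\le 2$ part is elementary. The genuine difficulty is concentrated in dimension three: one needs geometrization to reduce to the eight geometries, and then the most subtle case is closed hyperbolic $3$-manifolds, where residual finiteness is not classical but a consequence of the Virtual Haken theorem (Agol, building on Kahn--Markovic and Wise). Thus the proof proposal is essentially to cite geometrization together with Hempel's theorem on residual finiteness of geometric $3$-manifold groups and the results of Agol et al., and then assemble the pieces along the JSJ decomposition. I would not attempt to reprove any of these deep results in the paper; the theorem is stated here only to license the later appeal to finite covers of $M$ with large injectivity radius in the proofs of Theorems~\ref{thm:main} and \ref{thm:expansiveMME}.
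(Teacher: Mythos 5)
Your proposal is correct and follows essentially the same route as the paper, which simply cites Baumslag for the surface case and Hempel (made unconditional by Perelman's geometrization) for dimension three; your extra detail on the prime/JSJ reduction and on Malcev's theorem for the linear cases is consistent with how those citations are actually proved. One correction is worth making: for closed hyperbolic $3$-manifolds residual finiteness does \emph{not} require Agol's virtual Haken theorem. The fundamental group is a finitely generated discrete subgroup of $\mathrm{PSL}(2,\mathbb{C})$, hence linear, hence residually finite by Malcev --- exactly the same mechanism you use in dimension two, and exactly how Hempel treats the hyperbolic pieces. So your characterization of that case as ``not classical'' and dependent on Kahn--Markovic--Wise--Agol is a misattribution; the argument still goes through (virtually Haken does imply residually finite via Hempel's earlier work), but it invokes a far deeper theorem than is needed and misstates the logical dependencies.
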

For surfaces this  was first proved by Baumslag \cite{gB62}. Later on, Hempel \cite{jH87} proved that fundamental groups of three manifolds are residually finite. It is an open problem whether every hyperbolic group is residually \cite{gA14}.

For our purposes we need the following implication of a manifold having residually finite fundamental group.
\begin{proposition}\label{prop:large-inj}
Let $M$ be a smooth Riemannian manifold and suppose that  $\pi_1(M)$ is residually finite.  Then for every $R>0$  there is a smooth Riemannian manifold $N$ and a locally isometric finite-to-1covering map $p\colon N\to M$ such that the injectivity radius of $N$ is at least $R$.
\end{proposition}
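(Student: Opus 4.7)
The plan is to construct $N$ as a finite normal covering $\wM/H$ of $M$, where $H\le\Gamma:=\pi_1(M)$ is a finite-index normal subgroup chosen so that no nontrivial element of $H$ displaces any point of a fixed compact fundamental domain by less than $2R$. Since $(M,g)$ has no conjugate points in the setting of the paper, the same holds for $N$, and there the injectivity radius at a point equals half the length of the shortest nontrivial geodesic loop; this will immediately force $\inj(N)\ge R$.

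First, I fix a compact fundamental domain $K\subset\wM$ for the $\Gamma$-action, with basepoint $x_0\in K$ and $D:=\diam(K)$, and form the ``bad set''
\[
S:=\bigl\{\gamma\in\Gamma\setminus\{e\}: d(x,\gamma x)\le 2R\text{ for some }x\in K\bigr\},
\]
which by the triangle inequality lies inside $\{\gamma\in\Gamma: d(x_0,\gamma x_0)\le 2R+2D\}$ and is therefore finite by proper discontinuity of $\Gamma$ acting on the proper space $\wM$.

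Next, for each $\gamma\in S$ I invoke residual finiteness to pick a finite-index subgroup $K_\gamma\le\Gamma$ with $\gamma\notin K_\gamma$, and then replace it by its normal core $H_\gamma:=\bigcap_{g\in\Gamma}gK_\gamma g^{-1}$; since $K_\gamma$ has only finitely many conjugates, $H_\gamma$ is a finite-index \emph{normal} subgroup of $\Gamma$ still avoiding $\gamma$. Setting $H:=\bigcap_{\gamma\in S}H_\gamma$ gives a finite-index normal subgroup of $\Gamma$ with $H\cap S=\emptyset$. The quotient $N:=\wM/H$ inherits a smooth Riemannian metric, and the projection $p\colon N\to M$ is a locally isometric covering of degree $[\Gamma:H]<\infty$.

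Finally, to verify $\inj(N)\ge R$, pick any $q\in N$ and any lift $\tilde q\in\wM$, and write $\tilde q=\gamma\tilde q'$ with $\tilde q'\in K$ and $\gamma\in\Gamma$. For every $h\in H\setminus\{e\}$, the normality of $H$ gives $\gamma^{-1}h\gamma\in H\setminus\{e\}$, hence $\gamma^{-1}h\gamma\notin S$, and therefore
\[
d(\tilde q,h\tilde q)=d\bigl(\tilde q',\gamma^{-1}h\gamma\,\tilde q'\bigr)>2R.
\]
Thus the shortest nontrivial geodesic loop at $q$ has length exceeding $2R$, and since $\wM$ (hence $N$) has no conjugate points, $\inj_N(q)=\tfrac12\min_{h\in H\setminus\{e\}}d(\tilde q,h\tilde q)>R$. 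The step I expect to be most delicate is the passage from arbitrary finite-index subgroups to \emph{normal} ones: plain residual finiteness only produces finite-index subgroups, whereas normality is essential so that $\gamma^{-1}h\gamma$ stays in $H$ when the lift $\tilde q$ lies outside $K$; the normal-core construction (and the verification that the core is still of finite index, which uses that $K_\gamma$ has only finitely many conjugates) is what makes the displacement bound apply uniformly across $N$.
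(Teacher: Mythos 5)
Your proof is correct and is essentially the standard argument that the paper defers to \cite{CKW21}: cut down to a finite-index \emph{normal} subgroup $H$ avoiding the finite set of elements that displace some point of a compact fundamental domain by at most $2R$, and use normality to transport the displacement bound to arbitrary lifts, with the absence of conjugate points converting the lower bound on geodesic loops into a bound on the injectivity radius. Note only that your argument implicitly uses that $M$ is closed (for compactness of the fundamental domain and finiteness of the bad set $S$) and without conjugate points (so that the conjugate radius does not cap $\inj(N)$), both of which hold in the paper's setting though they are not spelled out in the statement.
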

\begin{proof}
For a proof see \cite{CKW21}.
\end{proof}

Now we can prove Theorem  \ref{thm:main} and Theorem \ref{thm:expansiveMME} using Theorem \ref{thm:uniqueMME-inj}
as follows.
\begin{proof}
Let $(M,g)$ be a closed manifold  Riemannian manifold without conjugate points and divergence property.
 Furthermore, assume that the fundamental group of $M$ is residually finite and Gromov hyperbolic fundamental group with Gromov hyperbolicity.
 By Proposition \ref{prop:large-inj} there is a smooth Riemannian manifold $N$ and is for some 
$k \in \N$ a locally isometric $k$ to $1$  covering map $p\colon N\to M$ such that the injectivity radius of $N$ is larger 
than $8 \delta$. In particular $N$ has no conjugate, the divergence property
and the  geodesic $\phi^t_{SM}: SM \to SM$ is $k$  to $1$ factor of the 
geodesic flow $\phi^t_{SN} : SN \to SN$ as well. If $\mu$ is a invariant Borel probability measure  on $SM$ 
we denote by $\tmu$ the canonical lift defined by
$$
\tmu(A) = \int_{SM} \frac{1} {k} \card \{dp^{-1} (v) \cap A \} d\mu(v),
$$
where $dp :SN \to SM$ is the differential of $p$.
Obviously is $\tmu$ is $\phi^t_{SN}$ invariant Borel probability measure with $dp_*\tmu =\mu$ and
$h_{\tmu}(\phi_{SN}) = h_{\mu}(\phi_{SM})$.
Since the expansive set of $SN$ is the lift of the expansive set in $SM$ 
and the expansivity constants of $\phi^t_{SM}: SM \to SM$ and  $\phi^t_{SN} : SN \to SN$ agree
the assumptions of Theorem \ref{thm:main} or Theorem \ref{thm:expansiveMME} imply that 
$\phi^t_{SN} : SN \to SN$ fullfils the assumptions
of Theorem \ref{thm:uniqueMME-inj}. Hence $\phi^t_{SN} : SN \to SN$ has a unique measure of maximal entropy.
 If $\mu_1$ and $\mu_2$ are measures of maximal entropy for  $\phi^t_{SM}$
then their lifts $\tmu_1$ and $\tmu_2$ are measures of maximal entropy for  $\phi^t_{SN}$ as well.
In particular $\tmu_1 = \tmu_2$  therefore $\mu_1 = \mu_2 = \mu$.

Since by Theorem \ref{thm:measure mu}
 $\tmu$  has full support, is mixing and $\tmu(\EE) =1$ this holds for the push-forward measure $\mu =dp_*\tmu$ as well.
  \end{proof}
  \bibliographystyle{plain}

\bibliography{geodflows}

\end{document}